\definecolor{amarillo}{RGB}{219, 48, 122}
\newtheorem{teo}{Theorem}
\newtheorem{prop}[teo]{Proposition}
\newtheorem{lema}[teo]{Lemma}
\newtheorem{maintheorem}{Theorem}
\newtheorem{maincor}[maintheorem]{Corollary}
\numberwithin{equation}{section}
\theoremstyle{remark}
\newtheorem{rem}[teo]{Remark}
\def\referencia#1#2{\begingroup
    #2%
    \def\@currentlabel{#2}%
    \phantomsection\label{#1}\endgroup
}
\newcommand{\LL}{\mathcal L}
\title[Regularity of the Lyapunov Exponents for Dominated Splitting]{Regularity with respect to the parameter of Lyapunov Exponents for Diffeomorphisms with Dominated Splitting}
\author[R. Saghin]{Radu Saghin}
\address{Radu Saghin, Instituto de Matem\'atica,
Pontificia Universidad Cat\'olica de Valpara\'{\i}so, Blanco Viel 596,
Cerro Bar\'on, Valpara\'{\i}so-Chile.}
\email{radu.saghin@pucv.cl}
\thanks{R.S. was partially supported by Proyecto Fondecyt 1171477}
\author[P. Valenzuela-Henr\'iquez]{Pancho Valenzuela-Henr\'iquez}
\address{Francisco Valenzuela-Henr\'iquez, Instituto de Matem\'atica,
Pontificia Universidad Cat\'olica de Valpara\'{\i}so, Blanco Viel 596,
Cerro Bar\'on, Valpara\'{\i}so-Chile.}
\email{francisco.valenzuela@pucv.cl}
\thanks{P.V.H. was partially supported by Programa Regional Mathamsud 16-MATH-06 PHYSECO}
\author[C. H. V\'asquez]{Carlos H. V\'asquez}
\address{Carlos H. V\'asquez, Instituto de Matem\'atica,
Pontificia Universidad Cat\'olica de Valpara\'{\i}so, Blanco Viel 596,
Cerro Bar\'on, Valpara\'{\i}so-Chile.} \email{carlos.vasquez@pucv.cl}
\thanks{C.H.V.  was partially supported by Proyecto Fondecyt 1171427.}
\date{\today}
\subjclass{Primary: 37D25, 37D30.}
\keywords{partial hyperbolicity, Lyapunov exponents}
\begin{document}

\maketitle

\begin{abstract}

We consider families of diffeomorphisms with dominated splittings and preserving a Borel probability measure, and we study the regularity of the Lyapunov exponents associated to the invariant bundles with respect to the parameter. We obtain that the regularity is at least the sum of the regularities of the two invariant bundles (for regularities in $[0,1]$), and under suitable conditions we obtain formulas for the derivatives. Similar results are obtained for families of flows, and for the case when the invariant measure depends on the map.

We also obtain several applications. Near the time one map of a geodesic flow of a surface of negative curvature the metric entropy of the volume is Lipschitz with respect to the parameter. At the time one map of a geodesic flow on a manifold of constant negative curvature the topological entropy is differentiable with respect to the parameter, and we give a formula for the derivative. Under some regularity conditions, the critical points of the Lyapunov exponent function are non-flat (the second derivative is nonzero for some families). Also, again under some regularity conditions, the criticality of the Lyapunov exponent function implies some rigidity of the map, in the sense that the volume decomposes as a product along the two complimentary foliations. In particular for area preserving Anosov diffeomorphisms, the only critical points are the maps smoothly conjugated to the linear map, corresponding to the global extrema.

\end{abstract}


\section{Introduction}\label{sec:intro}

The theory of characteristic exponents originated over a century ago in the study of the stability if solutions of differential equations by A. M. Lyapunov \cite{L1907b}. The work of Furstemberg, Kesten, Oseledets , Kingman, Ledrappier and other built the study of Lyapunov exponents into a very active research field  in its own right, and one with an unusually vast array of interaction with others areas of the mathematics and physics,  as stochastic processes (random matrices \cite{FK1960,FK1983}, random walks on groups \cite{G1979}), spectral theory (Schr\"odinger-type operators \cite{D2007,D2017} ) and smooth dynamics (non uniform hyperbolicity \cite{BP2007}). Since then, an extensive literature has been written about it, we refer the reader to the books \cite{BP2002,V2014}  and the expository papers \cite{W2017,V2019} for an approach of the theme related with our work.

In the setting of smooth dynamics, Lyapunov exponents play a key role understanding the  behavior of a dynamical system. On one hand, when  the Lyapunov exponents are nonzero, the theory initiated by Pesin  \cite{P1976} provides detailed geometric information on the dynamics and several deep results have been proved: entropy formula for smooth measures \cite{P1977} and its converse \cite{L1984b, LY1985}, the interplay with the Hausdorff dimension \cite{BPS1999}, the existence of uniformly hyperbolic sets having many periodic orbits (in particular, the number of orbits of period $n$ grows exponentially in $n$) and carrying large entropy \cite{K1980}, statistical description for the orbits of a large set of  points \cite{ABV2000, BV2000,D2000}.

On the other hand, vanishing exponents is an exceptional situation that also can be exploited. In the sixties, Furstenberg \cite{F1963}   proved in the setting of random matrices in ${\rm SL}(2,\mathbb{R})$ that if the exponent vanishes, then the matrices either leave invariant a common line or pair of lines, or they generate a precompact group (see \cite{L1984c} for a generalization to any dimension). Such possibilities are degenerate and they can be easily destroyed by perturbing the matrices. This suggests that in general zero exponents should be a special situation, associated with some rigidity of the system. There are several significant recent advances in this direction, and various versions of the so-called "Invariance principle" have been formulated: a dynamical setting in \cite{BGMV2003}, which was further refined and applied in various works, see for example \cite{AV2010,ASV2013,AVW2015}. 

An interesting question is  \textit{how do the Lyapunov exponents depend on parameters?}. Formally, Lyapunov exponents are quantities associated to a cocycle (linear bundle maps) over a measure-preserving dynamical system (base). So the question above involves the cocycle, the base map and the measure as underlying datas from which Lyapunov exponents depend.  Understanding the exact relationship between exponents, cocycles, measures, and dynamics is an area under intense exploration.

A first approach to the question above is to assume that the base map and the invariant measure are fixed, and to allow changes for the linear bundle maps. In the general situation, the Lyapunov exponents may not depend smoothly, or even continuously of the cocycle. In fact,  Bochi \cite{B2002} proved that for any fixed ergodic invertible dynamical system over a compact space there is a residual set (with respect to the $C^0$ topology) of continuous  $\textrm{SL}(2,\mathbb R)$-cocycles that are either uniformly hyperbolic or have zero Lyapunov exponents almost everywhere. These conclusions have been extended to arbitrary dimension by Bochi and Viana  \cite{BV2005} showing that if the Lyapunov exponents are continuous, and the Oseledets splitting is not trivial, then it must be dominated. This shows that one does not have continuity of the exponents if the exponents are nonzero and there is no dominated splitting. It is worth mentioning that recently Viana and Yang \cite{VY2019} proved that in the non-invertible setting Bochi's result does not hold, there are open sets of cocycles over some specific (highly hyperbolic) base maps where the exponents vary continuously in the $C^0$ topology.

On the other hand, when the cocycles have dominated splitting, one has even much better regularity than continuity. In fact,  Ruelle showed in \cite{R1979} that for any (fixed) base dynamics, for an analytic family of cocycles with dominated splitting, the Lyapunov exponents (corresponding to the invariant bundles of the splitting) are also analytic with respect to the parameter.

If we restrict our attention to some specific families of cocycles over some specific dynamics, one may obtain some continuity results. For example if the base dynamics is sufficiently random (a shift or hyperbolic, with an invariant measure with product structure), and the cocycle satisfies some other conditions (one-step, or H\"older continuous and with a bunching property or with holonomies), continuity of the Lyapunov exponents for two-dimensional cocycles was established (see  \cite{BV2017,BBB2018}), while a generalization for higher dimensional cocycles was announced by Avila-Eskin-Viana, at least for the case of random product of matrices. The reader can find an exposition of recents developments in this program (for cocycles) in the survey \cite{V2019}.

We are interested in this paper in the particular case when the cocycle is the \textit{derivative cocycle} induced from the derivative of a diffeomorphism $f:M\to M$ defined on a manifold $M$. In this case, the changes in the base dynamics $f$ are related to the changes in the cocycle, so a degree of freedom is lost and this interdependence makes the analysis more difficult. 

A natural approach to the question above in this setting is to try to understand the regularity of the Lyapunov exponents with respect to changes in the dynamics  fixing a invariant measure, for instance,  families of conservatives dynamics all of them preserving the Lebesgue measure. As well as in the case of  cocycles, some hyperbolicity is necessary if we wish obtain certain regularity. For the $C^1$ topology, R. Ma\~n\'e \cite{M1995}  observed that an area-preserving diffeomorphism of surfaces is a continuity point of the Lyapunov exponents only if either it is Anosov or its Lyapunov exponents vanish almost everywhere. His arguments were completed by J. Bochi \cite{B2002} and were extended to arbitrary dimensions by Bochi and M. Viana \cite{BV2005}, showing that in the absence of the dominated splitting, one cannot expect in general the continuity of the exponents. This suggests that in order to obtain some regularity of the Lyapunov exponents with respect to parameters, one should consider classes of diffeomorphisms with dominated splitting, such as Anosov diffeomorphisms or partially hyperbolic systems.

The regularity of various dynamical invariants (including the stable and unstable Lyapunov exponents) with respect to parameters was successfully investigated in the context of Anosov systems (motivated by hyperbolic geometry), along a series of works by Katok, Knieper, Pollicott, Weiss, Contreras, Ruelle, among others. The regularity of measure-theoretic entropy under smooth perturbations of $C^\infty$ geodesic flows on negatively curved surfaces is obtained in \cite{KW1989}. this result is extended in \cite{KKPW1990} and \cite{KKPW1989}, where it shown that the topological entropy of Anosov flows varies almost as smoothly as the perturbation. In \cite{P1991} the same results are obtained using equidistribution of periodic points. Also formulas for the derivatives of topological and measure theoretic entropy of the SRB measure were obtained in \cite{KKPW1990, KKW1991}, allowing, for instance to characterize the critical points of topological entropy on the space of negatively curved metrics. the paper \cite{W1992} gives formulas for the derivatives of entropy for Axiom A flows on compact manifolds and a formula for the derivative of  Hausdorff dimension of basic sets for AxiomA diffeomorphisms on compact surfaces. In \cite{C1992} it is proven that, for $C^r$ families of Anosov flows, the topological entropy is $C^r$, while the pressure function and in particular the metric entropy are $C^{r-2}$, and as a consequence it improves the degree of differentiability of the entropy obtained in \cite{KKW1991}. In \cite{R1997,R2003B} it is showed that the SRB measure of a mixing Axiom A attractor depends smoothly on the diffeomorphism and also formulas for the derivatives are obtained. The results are extended to hyperbolic flows in \cite{R2008} (see also \cite{BL2007}). Recall that in this setting the measure-theoretic entropy of SRB measures (or in particular of the volume) is exactly the sum of the positive Lyapunov exponents (the unstable exponent), so the results on the regularity of the metric entropy are in the same time results on the regularity of the stable and unstable exponents (the same holds for the formulas for derivatives). It is necessary to take in account that all these results use strongly the uniformly hyperbolic structure, in particular the structural stability of such systems, existence of Markov partitions and finite symbolic representation of the dynamic, analytic dependence of the topological pressure with respect to H\"older continuous potentials, equidistribution of periodic points, etc.


Beyond the uniform hyperbolicity, the study of the regularity of Lyapunov exponents with respect to the volume in the context of partially hyperbolic dynamics was initiated in a remarkable paper by Shub-Wilkinson \cite{SW2000}. The authors established the regularity of the center exponent within a specific family of volume preserving partially hyperbolic diffeomorphisms of the three-torus, they showed that the second derivative is nonzero, and in conclusion they constructed open sets of such diffeomorphisms which are stably ergodic, nonuniformly hyperbolic, and with pathological center foliations. These ideas were pushed further in \cite{RW2001}, while in \cite{R2003} the regularity of the exponents and formulas for the derivatives were obtained at linear automorphisms of the $n$-torus.

Another remarkable progress was obtained in \cite{D2004}, in the context of Anosov actions. Here, among other things, Dolgopyat established the regularity of the Lyapunov exponents at the time-one map of the geodesic flow on a surface of constant negative curvature, and the non-vanishing of the second derivative. He allows even that the measure on the base changes with the parameter, as long as it is a Gibbs u-state. Similar ideas were used also in \cite{DP2002} in order to construct completely hyperbolic diffeomorphisms on any manifold. In fact the two papers \cite{SW2000,D2004} are the main source of inspiration of our work.

It is easy to see that once we have a dominated splitting, the Lyapunov exponents corresponding to the invariant bundles are automatically continuous. However this may not be the case for individual Lyapunov exponents inside the bundle (if the dimension of the bundle is at least two). Bochi proved in \cite{B2010} that every partially hyperbolic symplectomorphism can be $C^{1}$-approximated by partially hyperbolic diffeomorphisms whose center Lyapunov exponents vanish. In particular, non-uniformly hyperbolic systems are not $C^{1}$-open. This situation changes if we consider the $C^{r}$ topology for $r\geq 2$ as showed recently by Liang, Marin and Yang  \cite{LMY2018,LMY2019}.

In the line of removing zero exponents and obtaining nonuniform hyperbolicity, Baraviera and Bonatti obtained in \cite{BB2003} that the Lyapunov exponents are not locally constant in the $C^1$ topology. In a parallel direction, there exists recent research relating the zero central exponents with rigidity properties of the system, and thus suggesting that the zero exponents are a highly non-generic situation, and they can also be easily removed using "Invariance Principle"-type results.

Our work is related to the Lyapunov exponents corresponding to bundles of invariant splittings. Since the continuity comes for free in this case, we are interested in higher regularity. We describe our setting in the following subsections.

\subsection{The basic setting}

Our setting is fairly general, we basically consider any (neighborhood of) diffeomorphisms with dominated splitting which preserve an invariant measure.

Let $M$ be an orientable compact Riemannian manifold without boundary of dimension $d$. Let $f:M\to M$ be a $C^r$ diffeomorphism of $M$, $r\ge 1$, which has a dominated splitting $TM=E^1\oplus E^2\oplus E^3$, meaning that the splitting is continuous, invariant under $Df$, and satisfies the following conditions:

$$
 \sup_{x\in M}\| Df\mid_{E^1(x)}\|\cdot \| Df^{-1}\mid_{E^2(f(x))}\|<1,
$$
$$
 \sup_{x\in M}\| Df\mid_{E^2(x)}\|\cdot \| Df^{-1}\mid_{E^3(f(x))}\|<1.
$$

We can allow that either $E^1$ or $E^3$ is trivial, however we assume that $E:=E^2$ and $F:=E^1\oplus E^3$ are not trivial, and let $k=\dim E\geq 1$. This means that our considerations can be applied in the context of partially hyperbolic diffeomorphisms to the stable, unstable, center, center-stable, center-unstable, or even intermediate bundles.

Assume that $f$ preserves the Borel probability measure $\mu$ on $M$. Oseledets Theorem (\cite{O1968}, see also \cite{M1987}, Chapter 4.10) gives the existence of $d$ Lyapunov exponents (counted with their multiplicity) for $\mu$ almost every point $p\in M$, and a corresponding Lyapunov splitting. From these $d$ exponents, $k$ will correspond to the invariant bundle $E$, meaning that the corresponding bundles of the Lyapunov splitting are inside $E$, and we denote their sum by $\lambda(p,f,E)$. The {\it integrated Lyapunov exponent of $f$ with respect to $\mu$ and associated to the bundle $E$} will be
$$
\lambda(f,E,\mu)=\int_M\lambda(p,f,E)d\mu.
$$

If the measure $\mu$ is ergodic for $f$, then of course $\lambda(p,f,E)=\lambda(f,E,\mu)$ for $\mu$ almost every $p\in M$.

From the Birkhoff Ergodic Theorem one can see that an alternative definition of the integrated Lyapunov exponent is

\begin{equation}\label{eq:defL}
\lambda(f,E,\mu)=\int_M\log\|Df^{\wedge k}|_E\|d\mu=\int_M\log|J(f|_E)|d\mu,
\end{equation}
where $J(f|_E)$ is the Jacobian of $f$ restricted to $E$.

Recall that if $f$  has a dominated splitting, then for any diffeomorphism $g$ which is $C^1$ close to $f$ the dominated splitting persists, i.e. there exists a dominated splitting for $g$, $TM=E^1_g\oplus E^2_g\oplus E^3_g$. If furthermore $g$ preserves the same measure $\mu$, then we can obtain again the integrated Lyapunov exponent $\lambda(g,E_g,\mu)$ of $g$ with respect to $\mu$ and associated to the bundle $E_g:=E^2_g$. 

The main goal of our paper is to study the regularity of the map $g\mapsto \lambda(g,E_g,\mu)$. This map is always continuous because of the continuous dependence of the dominated splitting with respect to the diffeomorphism. We will find sufficient conditions that guarantee better regularity of this map, we will obtain formulas for the derivatives along one-parameter families, and we will investigate the critical points in some specific situations. We also obtain several interesting applications.

\subsection{A simple example}

We start by presenting a simple example which gives insight into our results and also into the basic ideas of the proofs. It is in fact the particular case when the measure $\mu$ is the Dirac measure at a common fixed point.

Let $A\in GL_d(\mathbb R)$ be a matrix with a simple real positive eigenvalue $\eta$ and a corresponding eigenvector $v$, or $Av=\eta v$. Then any matrix $B$ in a neighborhood $\mathcal U$ of $A$ will also have a simple eigenvalue $\eta(B)$ which is the continuation of $\eta$. \textit{What can we say about the derivatives of the map $\log\eta:\mathcal U\subset GL_n(\mathbb R)\rightarrow\mathbb R$, $B\mapsto \log\eta(B)$?}

Let $E_{\eta}$ be the eigenspace $\mathbb Rv$, and $E_{\eta}^*$ the direct sum of the other (generalized) eigenspaces of $A$, so that we have the decomposition $\mathbb R^d=E_{\eta}\oplus E_{\eta}^*$, invariant by $A$. The adjoint matrix $A^*$ will also have the real simple eigenvalue $\eta$, and the corresponding eigenvector $v^*$ is a linear functional on $\mathbb R^d$ with the kernel equal to $E_{\eta}^*$ and satisfies $v^*A=\eta v^*$ (we write $v^*$ as a row vector). We assume the normalization $v^*v=1$.

Assume now that we have a smooth family of matrices $A(t)=B(t)A\in \mathcal U$ with $B(0)=Id$. The tangent vector to the family $A(t)$ in $t=0$ is $\mathfrak X=B'(0)\in\mathfrak{gl}_d(\mathbb R)$, or $B(t)\sim e^{t\mathfrak X}$ up to order one. Denoting by $\mathfrak Y=B''(0)-B'(0)^2\in\mathfrak{gl}_d(\mathbb R)$, we have the approximation up to order two: $B(t)\sim e^{t\mathfrak X}e^{\frac {t^2}2\mathfrak Y}$.

Each $A(t)$ will have a simple real eigenvalue $\eta(t)$ and corresponding eigenvectors $v(t)$, the continuations of $\eta$ and $v$. We normalize $v(t)$ such that $v^*v(t)=1$. Differentiating the relation $B(t)Av(t)=\eta(t)v(t)$ and evaluating in $t=0$ we get
\begin{equation}\label{eq:00}
\eta \mathfrak Xv+Av'(0)=\eta'(0)v+\eta v'(0).
\end{equation}
Since $v^*v(t)=1$ is constant we get that $v^*v'(t)=0$, or $v'(t)\in E_{\eta}^*=\ker v^*$. Applying $v^*$ to \eqref{eq:00} and dividing by $\eta$ we get
\begin{equation}\label{eq:lambda'}
[\log\eta]'(0)=\frac{\eta'(0)}{\eta}=v^*\mathfrak Xv.
\end{equation}

In other words, the derivative of $\log\eta$ at $A$ is  $D\log\eta (A) (\cdot) =v^*(\cdot)v$. It is worth observing that the derivative does not depend on the whole information of $A$, but only on the invariant decomposition $\mathbb R^d=E_{\eta}\oplus E_{\eta}^*$.

We can compute $v'(0)$ by projecting \eqref{eq:00} to $E_{\eta}^*$. Let $\mathcal P:\mathbb R^d\rightarrow E_{\eta}^*$ be the projection on $E_{\eta}^*$ along $E_{\eta}$, or $\mathcal Pu=u-(v^*u)v$. Then applying $\mathcal P$ to \eqref{eq:00}, and observing that $\mathcal Pv=0$ and $\mathcal Pv'(0)=v'(0)$, we get
\begin{equation*}
(\eta Id-A)v'(0)=\eta \mathcal P\mathfrak Xv.
\end{equation*}
The map $(\eta Id-A)$ is clearly not invertible on $\mathbb R^d$, however it is invertible if we restrict it to $\ker v^*=E_{\eta}^*$, so we obtain
\begin{equation}\label{eq:v'}
v'(0)=\left[\left.\left(Id-\frac 1{\eta}A\right)\right|_{E_{\eta}^*}\right]^{-1}\mathcal P\mathfrak Xv.
\end{equation}
Observe that $v'(0)$ is linear in $\mathfrak X$ (and does depend on $A$, not only on $E_{\eta}\oplus E_{\eta}^*$).

In order to compute $\eta''$ we differentiate twice the relation $B(t)Av(t)=\eta(t)v(t)$ and we evaluate in $t=0$
\begin{equation*}
\eta (\mathfrak X^2+\mathfrak Y)v+2\mathfrak XAv'(0)+Av''(0)=\eta''(0)v+2\eta'(0)v'(0)+\eta v''(0).
\end{equation*}
Applying $v^*$, using the fact that $v^*v'=v^*v''=0$, and dividing by $\eta$, we get
\begin{equation*}
\frac{\eta''(0)}{\eta}=v^*\mathfrak Yv+v^*\mathfrak X\mathfrak X v+\frac 2{\eta}v^*\mathfrak XAv'(0).
\end{equation*}
From here we obtain
\begin{equation}\label{eq:lambda''}
[\log\eta]''(0)=\frac{\eta''(0)}{\eta}-\frac{\eta'(0)^2}{\eta^2}=v^*\mathfrak Yv+v^*\mathfrak X\mathfrak Xv-\left(v^*\mathfrak Xv\right)^2+\frac 2{\eta}v^*\mathfrak XAv'(0).
\end{equation}
Let us remark that this is the sum of a linear map in $\mathfrak Y$ and a bilinear map in $(\mathfrak X,\mathfrak X)$ (recall that $v'(0)$ is linear in $\mathfrak X$ by \eqref{eq:v'}).

If we consider the map $\log \eta:\mathcal U\subset GL_d(\mathbb R)$, with $\mathcal U$ open in $GL_d(\mathbb R)$, then its derivative at $A$ does not vanish, there always exists $\mathfrak X\in\mathfrak {gl}_d(\mathbb R)$ such that $D\log\eta(A)\mathfrak X\neq 0$. However if we restrict the attention to perturbations $B(t)\in SO(d)$ (preserving the inner product), then we can talk about critical points of the map $\log\eta$. In this case $\mathfrak X\in\mathfrak{so}(d)$ and $\mathfrak Xv$ is orthogonal to $v$, and since $D\log\eta(A)\mathfrak X=v^*\mathfrak Xv$, we have a characterization of the critical points of the map $\log\eta$: the matrices $A$ for which $D\log\eta(A)$ vanishes are exactly the matrices $A$ such that the decomposition $\mathbb R^d=E_{\eta}\oplus E_{\eta}^*$ is orthogonal.

Now assume that the decomposition $E_{\eta}\oplus E_{\eta}^*$ is orthogonal, and $w$ is an eigenvector in $E_{\eta}^*$ for some other real eigenvalue $\nu$ of $A$. Let $B(t)$ be the rotation by angle $t$ in the ($v$, $w$)-plane, so $\mathfrak Y=0$ and $\mathfrak X$ be the rotation of angle $\frac{\pi}2$ in the plane generated by $v$ and $w$. Then $\mathfrak Xv=w(=\mathcal P\mathfrak Xv)$ and $\mathfrak Xw=-v$. We get that $v^*\mathfrak Xv=v^*w=0$, $v^*\mathfrak X\mathfrak Xv=v^*\mathfrak Xw=v^*(-v)=-1$. We also obtain that $v'(0)=\frac{\eta}{\eta-\nu}w$, so $v^*XAv'(0)=v^*\mathfrak X\frac{\eta\nu}{\eta-\nu}w=-\frac{\eta\nu}{\eta-\nu}$. Then
$$[\log\eta]''(0)=-1+\frac {2\nu}{\nu-\eta}=\frac{\eta+\nu}{\mu-\eta}.$$
We can conclude from here that if $\eta\neq-\nu$ then the second derivative of $\log\eta$ at $A$ is not degenerate (or not flat).

\subsection{More detailed setting and the correspondence with the example}

Keeping in mind the above example, let us move back to diffeomorphisms. We will see a correspondence between families of matrices and families of diffeomorphisms with dominated splittings.

To be more specific, let us fix a standing hypothesis and some notations which we will use throughout most of the paper:

{\bf Hypothesis \referencia{H}{(H)}}: {\it 
\begin{enumerate}
\item
Let $f$ be a $C^r$ ($r\geq 1$) diffeomorphism on the compact orientable Riemannian manifold $M$ of dimension $d$, with a dominated splitting $TM=E^1\oplus E^2\oplus E^3$, with all the sub-bundles oriented and the orientations preserved by $Df$, $E=E^2$ and $F=E^1\oplus E^3$ are nontrivial, $E$ has dimension $k$
\item
Let $f_t$, $t\in I\subset\mathbb R$, be a $C^r$ family of $C^r$ diffeomorphisms passing through $f$ ($0\in I$, $f_0=f$).
\item
All the maps $f_t$ preserve the same Borel probability $\mu$.
\end{enumerate}}

The first two conditions are very general, we just consider any family (with some regularity) passing through a diffeomorphism with a dominated splitting. The orientability conditions are made in order to avoid some technical difficulties, they are satisfied by most of the known examples, and they can be probably removed with little extra work. The third condition is much more restrictive, we require that all the diffeomorphisms preserve the same measure. Even if we have in mind mainly the situation when the measure $\mu$ is the volume (the conservative case), we prefer to write the results in this more general setting for various reasons. First of all, when establishing the regularity of the Lyapunov exponent and the formulas for the derivatives, it does not matter what the measure $\mu$ is, and assuming that $\mu$ is a volume does not simplify the proofs. Secondly, one can imagine possible applications when the measure $\mu$ is not the volume, for example it can be a volume on some submanifold which is preserved by the family, and we can consider Lyapunov exponents in the direction transversal to the submanifold (so one cannot reduce to the dynamics on the submanifold). We can even consider the case of a Dirac measure at a common fixed point, which is in fact the example in the previous subsection. A third and probably most important reason to consider any measure $\mu$ is the possibility of extending our results to the case when the invariant measure $\mu_t$ depends on the diffeomorphism $f_t$. We explore this possibility in Theorem \ref{teo:response12}, and we get a surprising application for the regularity of the topological entropy in Theorem \ref{teo:responsegeo}. We think that this direction deserves to be further investigated.

Next we list the notations which we will use throughout most of the paper:

{\bf Notations}:
{\it \begin{itemize}
\item
$TM=E_t^1\oplus E_t^2\oplus E_t^3$ is the corresponding dominated splitting for $f_t$; $E_t:=E_t^2$, $F_t:=E_t^1\oplus E_t^3$, so $TM=E_t\oplus F_t$ is a continuous invariant splitting for $f_t$.
\item
$\lambda(t)=\lambda(f_t,E_t,\mu)=\int_M\log\|Df_t^{\wedge k}|_{E_t}\|d\mu$.
\item
$X$ is the $C^{r-1}$ vector field on $M$ tangent to the family $h_t:=f_t\circ f_0^{-1}$ in $t=0$ (see subsection~\ref{ssec:XY}). If $r\geq 2$ let $\phi^X$ be the flow on $M$ generated by $X$.
\item
If $r\geq 2$ then $Y$ is the $C^{r-2}$ vector field which is the second order correction of $h_t$ (see subsection~\ref{ssec:XY}). If $r\geq 3$ let $\phi^Y$ be the flow on $M$ generated by $Y$.
\item
$\omega_E$ is a continuous nonzero $k$-form on $M$ such that $\ker\omega_F=F\wedge TM^{\wedge(k-1)}$.
\item
$V_t$ is a continuous nonzero $k$-multivector field in $E_t^{\wedge k}$. $V_E:=V_0$.
\item
We choose $\omega_F$ and $V_E$ (and $V_t$) such that: if $F$ is $C^{\alpha}$ then $\omega_F$ is $C^{\alpha}$; if $E$ is $C^{\beta}$ then $V_E$ is $C^{\beta}$; $\omega_F(V_t)=1$ (see subsection~\ref{ssec:omegaV}).
\item
$\eta_t,\tilde\eta_t:M\rightarrow\mathbb R$, $(f_t)_*V_t=\tilde\eta_t V_t$, $\eta_t=\tilde\eta_t\circ f_t$. Furthermore (see more comments below)
$$
\lambda(t)=\int_M\log\eta_t(p)d\mu=\int_M\log\tilde\eta_t(p)d\mu.
$$
\item
If $E$ is $C^1$ then $V_t$ is differentiable with respect to $t$ at $t=0$, and we denote its derivative $V'$ (for more on this and an explicit formula for $V'$ see subsection~\ref{ssec:regularidadVt}).
\item
If $t=0$ we will just drop the index $t$ from all the notations: $f:=f_0$, $E:=E_0$, $F:=F_0$, $\eta:=\eta_0$, $\tilde\eta:=\tilde\eta_0$, $\lambda:=\lambda(0)$, etc.
\end{itemize}}

The orientation assumptions are used in order to work with vector fields and forms, motivated by the book \cite{V2014}. We are making an abuse of the notations, considering that the $k$-form $\omega_F$ acts on $k$-multivectors, however the action is well defined since a differential form is multilinear and anti-symmetric.

We denote by $(f_t)_*$ the action induced by $Df_t$ on the tangent bundle (vectors, multivectors) and $(f_t)^*$ the action induced on the cotangent bundle (forms). Since the space $E_t(p)$ is $(f_t)_*$--invariant, there is a real number $\eta_t(p)$ such that
\begin{equation}\label{eq:eta}
(f_t)_*V_t(p)=\eta_t(p)\cdot V_t(f_t(p)).
\end{equation}
In other words, if we denote $\tilde\eta_t=\eta_t\circ f_t^{-1}$, we have
$$
(f_t)_*V_t(p)=\tilde\eta_t(p)\cdot V_t(p).
$$
Observe that in fact $\eta_t$ measures the volume expansion of $Df_t$ restricted to $E_t$ using a metric which gives norm one to the multivectors $V_t$. Since the Lyapunov exponent is independent of the metric this implies that
\begin{equation}\label{eq:Lyapunov}
\lambda(t)=\int_M\log\eta_t(p)d\mu=\int_M\log\tilde\eta_t(p)d\mu.
\end{equation}

A representation of $\omega, V, V_t$ and the action of the derivatives of $f$ and $h_t$ can bee seen in Figure~\ref{dib:perturb}.

\begin{figure}[h]
\includegraphics[scale=.95]{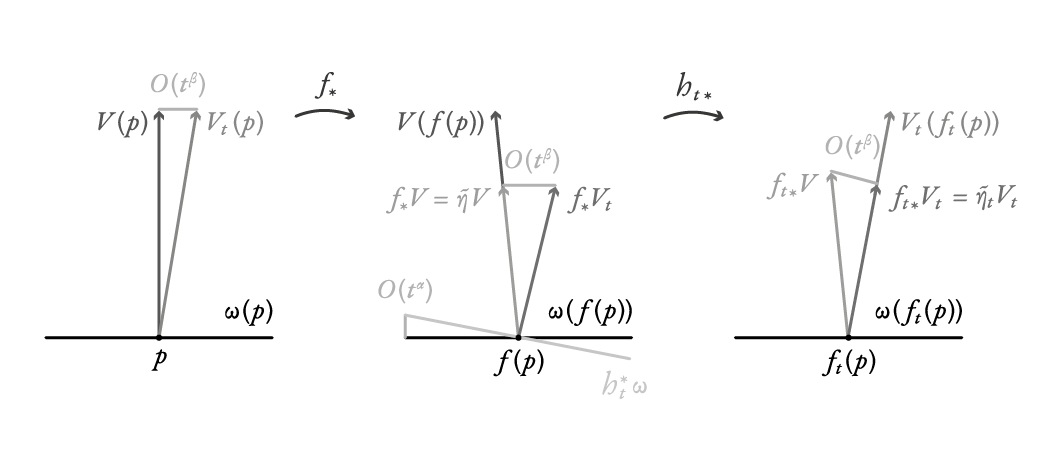}
\caption{\label{dib:perturb} Action of the derivative $Df_t$ on $V_t$.}
\end{figure}

Once we introduced all these objects, one can see the clear correspondence between them and the objects from the example of the family of matrices. The equivalences are included in Table \ref{table}, and we anticipate formulas for the derivatives of the Lyapunov exponents $\lambda'(0)$, $\lambda''(0)$, and the derivative $V'$ of $V(t)$, which we will state formally in the next sections.

There are only some small differences. While in the case of matrices the tangent vector $\mathfrak X$ acts on vectors and covectors by multiplication, in the case of diffeomorphisms the tangent vectorfield $X$ acts on other vectorfields or forms by the Lie derivative (this is the corresponding infinitesimal action of the family on vector fields or forms).

The formulas in Table \ref{table} for the derivative of the Lyapunov exponent are in terms of the Lie derivative, and they are defined if either $V_E$ or $\omega_F$ are $C^1$. Let us remark that the two formulas can be rewritten in the following form (recall that $\phi^X$ preserves the measure $\mu$):
\begin{equation}
\lambda'(0)=\left.\frac{\partial}{\partial t}\int_M\omega_F(\phi_{t*}^XV_E)d\mu\right|_{t=0}=\left.\frac{\partial}{\partial t}\int_M\phi_t^{X*}\omega_F(V_E)d\mu\right|_{t=0}.
\end{equation}
The expression $g(t)=\int_M\omega_F(\phi_{t*}^XV_E)d\mu$ is morally a convolution of two functions, and in general the regularity of a convolution of two functions is the sum of the regularities of the two functions. Using this observation we are able to show that even though $\omega_F(\phi_{t*}^XV_E)$ may not be differentiable with respect to $t$, the integral $\int_M\omega_F(\phi_{t*}^XV_E)d\mu$ will be indeed differentiable provided $\omega_F$ and $V_E$ are H\" older continuous and the sum of the H\" older exponents is larger than one. This allows us to obtain the derivative of the Lyapunov exponent even if the splitting is only H\" older with exponent better than $1/2$.

\newpage

\begin{table}[h!]\label{table}
  \begin{center}
    \caption{Correspondence Matrix-Diffeomorphism}
    \label{tab:table1}
    \begin{tabular}{l|r}
      \textbf{Family of matrices $A(t)$, $A=A(0)$} & \textbf{Family of diffeomorphisms $f_t$, $f=f_0$}\\
  \hline\hline
      Decomposition $\mathbb R^d=E_{\eta}\oplus E_{\eta}^*$, the  & Invariant splitting  $TM=E\oplus F$, where \\
eigenspace of $\eta$ and the sum of the  & $F=E_1\oplus E_3$, $E=E_2$ and $E_1\oplus E_2\oplus E_3$ is a  \\
other eigenspaces & dominated splitting, $\dim(E)=k$\\
    \hline
      Eigenvector $v$ of $A$ for $\eta$, & Continuous multivector field $V_E$ in $E^{\wedge^k}$,\\

      eigenvector $v(t)$ of $A(t)$ for $\eta(t)$ & continuous multivector field $V_t$ in $E_t^{\wedge^k}$ \\ 
      \hline
      Dual eigenvector $v^*$ for the same $\eta$, & Continuous form $\omega_F$, with the kernel \\
with the kernel $E_{\eta}^*$, and $v^*v(t)=1$ & $F\wedge TM^{\wedge^{k-1}}$, and $\omega_F(V_t)=1$ \\
\hline
       \multirow{3}{*}{Tangent matrix $\mathfrak X$ of $A(t)$ in $t=0$ }& Tangent vectorfield $X$ to $f_t$ in $t=0$; \\
& The infinitesimal action on $TM$ is $-\mathcal L_X$,\\
& the infinitesimal action on $T^*M$ is $\mathcal L_X$.\\
\hline
Second order tangent matrix $\mathfrak Y$, & Second order tangent vectorfield $Y$,\\
 so that $A(t)\sim e^{t\mathfrak X}e^{\frac{t^2}2\mathfrak Y}A$ & so that $f(t)\sim \phi_t^X\circ\phi_{\frac {t^2}2}^Y\circ f$\\
\hline
\multirow{3}{*}{The eigenvalue $\eta(t)$ of $A(t)$ ($\eta$ of $A$)} & Pointwise expansion $\tilde\eta_t:M\rightarrow \mathbb R$, \\
& $f_*V_E=\tilde\eta V_E$ ($\tilde\eta:=\tilde\eta_0$);\\
 & Lyapunov exponent $\lambda(t)=\int_M\log\tilde\eta_t d\mu$\\
\hline
\multirow{2}{*}{$[\log\eta]'(0)=v^*\mathfrak Xv$} & $\lambda'(0)=-\int_M\omega_F(\mathcal L_XV_E)d\mu$ (if $E$ is $C^1$)\\
&  $\lambda'(0)=\int_M\mathcal L_X\omega_F(V_E)d\mu$ (if $F$ is $C^1$)\\
\hline
$[\log\eta]''(0)=v^*\mathfrak Yv+v^*\mathfrak X\mathfrak Xv-$ & $\lambda''(0)=\int_M\mathcal L_Y\omega_f(V_E)-\mathcal L_X\omega_F(\mathcal L_XV_E)-$\\
\hspace{1.5cm}$-\left(v^*\mathfrak Xv\right)^2+\frac 2{\eta}v^*\mathfrak XAv'(0)$ & $-\left[\omega_f(\mathcal L_XV_E)\right]^2+\frac 2{\tilde\eta}\mathcal L_X\omega_F(f_*V')d\mu$\\
& (If $E$ and $F$ are $C^1$)\\
\hline
\multirow{2}{*}{$v'(0)=\left[\left.\left(Id-\frac 1{\eta}A\right)\right|_{E_{\eta}^*}\right]^{-1}\mathcal P\mathfrak Xv$ }& $V'=\left[\left.\left(Id-\frac {f_*}{\tilde\eta}\right)\right|_{F\wedge E^{\wedge(k-1)}}\right]^{-1}\mathcal P(\mathcal L_XV_E)$\\
& (If $E$ is $C^1$)\\
\hline
    \end{tabular}
  \end{center}
\end{table}

\newpage
\subsection{Regularity of the integrated Lyapunov exponent}\label{ssec:regularity}

Our first result relates the regularity of $\lambda$ at $t=0$ with the regularity of the splitting $TM=E\oplus F$ for $f_0$. It is well known that if $f$ is $C^2$ then the subbundles of the dominated splitting $E$ and $F$ are at least H\" older continuous, with the exponents given by the bounds on the derivative $Df$ restricted to the two subbundles. If $f$ is some special example of a partially hyperbolic diffeomorphism (skew product over Anosov, time one map of Anosov flow), then the regularity of one or both subbundles may be even better.

If the map $t\mapsto E_t$ is of class $C^{\beta}$ (with respect to $t$ in $t=0$), it is easy to see that the same holds for $t\mapsto\lambda(t)$, as long as $r\geq\beta+1$. If $E$ is $C^{\beta}$ (with respect to the points on $M$) for some $\beta \in[0,1]$, then $t\mapsto E_t$ is also $C^{\beta}$  at $t=0$. Dolgopyat proved in \cite{D2004} this fact for the case $\beta=1$, and we will discuss this point with more details in subsection~\ref{ssec:regularidadVt}. Thus the regularity of $\lambda$ at $t=0$ is at least the regularity of $E$, and by the symmetry it is also at least the regularity of $F$, at least up to the $C^1$ regularity. The next result says that in fact the regularity of $\lambda$ at $t=0$ is at least the sum of the regularity of $E$ and the regularity of $F$, if the two regularities of the bundles are in $[0,1)$.

\begin{maintheorem}\label{teo:lyapunovH}
Assume that \ref{H} is satisfied for $r\geq 3$, $F$ is of class $C^{\alpha}$ and $E$ is of class $C^{\beta}$ on a neighborhood of the support of $\mu$, for some $\alpha,\beta\in[0,1)$. Then
\begin{enumerate}
\item
If $\alpha+\beta<1$ then $\lambda(t)=\lambda(0)+O(t^{\alpha+\beta})$;
\item
If $\alpha+\beta=1$ then $\lambda(t)=\lambda(0)+O(t\log t)$;
\item
If $\alpha+\beta>1$ then $\lambda(t)=\lambda(0)+t\lambda'(0)+O(t^{\alpha+\beta-1})$. Furthermore
\begin{equation}\label{eq:derLyapH}
\lambda'(0)=\left.\frac{\partial}{\partial t}\int_M(\phi_t^X)^*\omega_F(V_E)d\mu\right|_{t=0}=\left.\frac{\partial}{\partial t}\int_M\omega_F((\phi^X_t)_*V_E)d\mu\right|_{t=0}
\end{equation}
and
\begin{equation}\label{eq:bderLyapH}
|\lambda'(0)|\leq C_{\alpha,\beta,M}\|X\|_{C^0}\|\omega_F\|_{C^{\alpha}}\|V_E\|_{C^{\beta}}+C_M\|X\|_{C^1}\|\omega_F\|_{C^0}\|V_E\|_{C^0}.
\end{equation}
(See subsection~\ref{ssec:omegaV} for a precise definition and comments on $\omega_F$ and $V_E$).
\end{enumerate}
\end{maintheorem}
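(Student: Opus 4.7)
The plan is to reduce the regularity of $\lambda(t)-\lambda(0)$ to that of a single auxiliary integral along the tangent flow $\phi_t^X$, and then to extract the H\"older gain $\alpha+\beta$ by a convolution-type estimate.

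\emph{Step 1 (Reduction to the tangent flow).} Starting from $\lambda(t)=\int_M\log\omega_F((f_t)_*V_t)\,d\mu$, I would perform two replacements. First, writing $f_t=h_t\circ f$, I replace $h_t$ by $\phi_t^X$ at the cost of an $O(t^2)$ error, since they agree up to $O(t^2)$ in the $C^{r-1}$ topology ($r\ge 3$). Second, I replace $V_t$ by $V_E$: the difference $W:=V_t-V_E$ lies in $\ker\omega_F=F\wedge TM^{\wedge(k-1)}$ (since $\omega_F(V_t)=\omega_F(V_E)=1$), and this subspace is $f_*$-invariant because $F$ is $f$-invariant, so $\omega_F(f_*W)=0$ pointwise. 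Hence, pointwise at $q=\phi_{-t}^X(p)$ one has
\[
\omega_F\bigl((\phi_t^X)_*f_*W\bigr)(p)=\bigl[(\phi_t^X)^*\omega_F-\omega_F\bigr](q)(f_*W(q))=O(t^\alpha)\cdot O(\|W\|_{C^0})=O(t^{\alpha+\beta}),
\]
using that $V_t$ varies with H\"older exponent $\beta$ in $t$ (matching the spatial regularity of $E$). Combined with $f_*V_E=\tilde\eta V_E$, the $\mu$-invariance of $\phi_t^X$ (which follows from $\mathcal L_X\mu=0$, itself a consequence of each $h_t$ preserving $\mu$), and a Taylor expansion of $\log$ around $1$ (contributing an $O(t^{2\max(\alpha,\beta)})\le O(t^{\alpha+\beta})$ error), these replacements yield
\[
\lambda(t)-\lambda(0)=G(t)-G(0)+O(t^{\alpha+\beta}),\qquad G(t):=\int_M\omega_F\bigl((\phi_t^X)_*V_E\bigr)\,d\mu=\int_M(\phi_t^X)^*\omega_F\,(V_E)\,d\mu.
\]

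\emph{Step 2 (H\"older gain for $G$).} The heart of the matter is to show
\[
G(t)-G(0)=\begin{cases}O(t^{\alpha+\beta}),&\alpha+\beta<1,\\ O(t\log t),&\alpha+\beta=1,\\ tG'(0)+O(t^{\alpha+\beta}),&\alpha+\beta>1,\end{cases}
\]
together with the quantitative bound \eqref{eq:bderLyapH}. Although the integrand $[(\phi_t^X)^*\omega_F-\omega_F](V_E)$ has pointwise size $O(t^{\max(\alpha,\beta)})$, integrating against $\mu$ produces a regularity gain because the pairing is controlled in the negative-$\beta$ dual norm of $C^\beta$. Locally, by straightening $X$ in a $C^{r-1}$ flow box, the problem reduces to the classical estimate for the convolution of a $C^\alpha$ with a $C^\beta$ function, which gives the three regimes above (with the sharp $t\log t$ borderline at $\alpha+\beta=1$). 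Fubini and a finite partition of unity transport the estimate globally, while the smooth Jacobian $D\phi_t^X$ that appears in the push-forward produces the subleading term in \eqref{eq:bderLyapH} involving $\|X\|_{C^1}\|\omega_F\|_{C^0}\|V_E\|_{C^0}$.

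\emph{Step 3 (Conclusion and main obstacle).} Combining Steps~1 and~2 gives the three alternatives (i)--(iii). In the differentiable regime $\alpha+\beta>1$, interchanging $\partial_t$ with the integral in $G$ at $t=0$ yields the two equivalent expressions in \eqref{eq:derLyapH} (the first by differentiating on the form side, the second by differentiating on the multivector side after the change of variables). I expect the main obstacle to be Step~2: the one-dimensional convolution bound is classical, but pushing it through a nonlinear straightening of $X$ and integrating against an arbitrary measure $\mu$ (not necessarily absolutely continuous along $X$-orbits) requires careful tracking of constants so that the sharp bound \eqref{eq:bderLyapH} with the logarithmic borderline comes out in terms of the right explicit norms.
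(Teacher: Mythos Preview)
Your overall strategy matches the paper's: reduce $\lambda(t)-\lambda(0)$ to $G(t)-G(0)+O(t^{\alpha+\beta})$ with $G(t)=\int_M\omega_F((\phi_t^X)_*V_E)\,d\mu$, then establish the $C^{\alpha+\beta}$ regularity of $G$. Two points need repair.

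\textbf{Step 1 error term.} Replacing $h_t$ by $\phi_t^X$ does \emph{not} cost $O(t^2)$ when acting on H\"older data. One has $(h_{t*}-\phi_{t*}^X)V_E=O(t^{\min\{2\beta,r-1\}})=O(t^{2\beta})$ (Lemma~\ref{le:htphit}), and $2\beta<\alpha+\beta$ when $\beta<\alpha$, so your bound fails in that regime. The paper runs the argument on both sides: it derives the dual estimate $(h_t^*-\phi_t^{X*})\omega_F=O(t^{2\alpha})$ and uses whichever of $2\alpha,2\beta$ exceeds $\alpha+\beta$ (see equations \eqref{eq:tetat}--\eqref{eq:lambdat} and the case split $\beta\ge\alpha$ versus $\alpha>\beta$). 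This is an easy fix, but necessary.

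\textbf{Step 2: the convolution for arbitrary $\mu$.} This is where your proposal has a real gap. Straightening $X$ in a flow box gives coordinates $(s,y)$ with $\phi_t^X(s,y)=(s+t,y)$, but the box has finite $s$-extent and $\mu$ need not be absolutely continuous, so there is no Fubini along orbits to convert $\int a(\phi_t^X(p))b(p)\,d\mu$ into a one-dimensional convolution. You flag this as ``careful tracking of constants'', but it is a structural problem, not a bookkeeping one. The paper's resolution (Theorem~\ref{teo:flow}) is to bypass the spatial integral entirely: assuming $\mu$ ergodic, Birkhoff gives $G(t)=\lim_{T\to\infty}\frac1T\int_0^T a(\phi_{s+t}^X(x))b(\phi_s^X(x))\,ds$ at a generic $x$, Poincar\'e recurrence closes this orbit to a map $\mathbb T\to M$, and the expression becomes a genuine circle convolution $f_n\star g_n$ to which a Fourier-coefficient estimate (Lemma~\ref{le:Fourier}) applies, with bounds uniform in the approximation. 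Ergodic decomposition then handles general $\mu$. Without this dynamical device (or an equivalent one), the $C^{\alpha+\beta}$ gain for $G$ against arbitrary invariant $\mu$ is not established.
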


The result is completely new because in particular it establishes differentiability of the Lyapunov exponents even if the subbundles of the dominated decomposition of $f$ are only H\" older continuous, as long as the sum of the H\" older exponents is larger than one. Our result also works for any family of perturbations in any direction, as long as it preserves the invariant measure $\mu$ of course. The previous results in the literature on differentiability of the Lyapunov exponents for dominated splittings considered either very specific maps $f$ with all the subbundles smooth (in fact algebraic: linear maps on the torus in \cite{SW2000,R2003}, time one map of the geodesic flow on a surface of negative curvature in \cite{D2004}), or some specific maps with some smooth subbundles and some specific perturbations (time-one maps of Anosov flows in \cite{DP2002}, perturbations preserving a subbundle).

In order to prove Theorem~\ref{teo:lyapunovH} we will use the following result which has its own interest. It treats basically the regularity of a convolution of two functions along a flow.

\begin{maintheorem}\label{teo:flow}
Let $\phi_t$ be a flow on the compact manifold $M$, generated by the $C^r$ vector field $X$. Let $f,g:M\rightarrow\mathbb R$ be continuous observables on $M$. Let $\mu$ be an invariant measure for $\phi$, and assume that $f$ is $C^{\alpha}$ and $g$ is $C^{\beta}$ in a neighborhood of the support of $\mu$, $\alpha,\beta\geq 0$, $r\geq\max\{\alpha,\beta\}-1$. Let $h(t)=\int_Mf(x)g(\phi_t(x))d\mu$. If either $\alpha+\beta$ is not an integer, or both $\alpha$ and $\beta$ are integers, then $h$ is $C^{\alpha+\beta}$; otherwise $h$ is $C^{\alpha+\beta-1+Zygmund}$.

We have the bound $\| h\|_{C^{\alpha+\beta}}\leq C_{X,\alpha,\beta}\| f\|_{C^{\alpha}}\| g\|_{C^{\beta}}$, where $C_{X,\alpha,\beta}$ depends on $\alpha,\beta$, and $X$. In particular if $\alpha,\beta\in(0,1)$, $\alpha+\beta>1$ then $\|h\|_{C^1}\leq C_{\alpha,\beta}\|X\|_{C^0}\|f\|_{C^{\alpha}}\| g\|_{C^{\beta}}$.
\end{maintheorem}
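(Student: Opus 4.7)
My plan is to proceed by induction on $\lfloor\alpha\rfloor+\lfloor\beta\rfloor$ to reduce to $\alpha,\beta\in[0,1)$. If $\beta\geq 1$ then $g\in C^1$, and differentiation under the integral gives $h'(t)=\int_M f\,(Xg)\circ\phi_t\,d\mu$, which has the same form with exponents $(\alpha,\beta-1)$. Symmetrically, if $\alpha\geq 1$, invariance of $\mu$ gives $h(t)=\int_M(f\circ\phi_{-t})\,g\,d\mu$, so $h'(t)=-\int_M(Xf)\cdot g\circ\phi_t\,d\mu$, reducing to $(\alpha-1,\beta)$. Iterating brings both exponents below $1$.

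For the reduced range $\alpha,\beta\in[0,1)$ I would introduce mollifiers $f_\epsilon,g_\epsilon$ on $M$ (built via a partition of unity with Euclidean mollification, or via the heat semigroup) satisfying the standard bounds
\begin{equation*}
\|f-f_\epsilon\|_{C^0}\leq C\epsilon^\alpha\|f\|_{C^\alpha},\qquad \|X^\ell f_\epsilon\|_{C^0}\leq C\|X\|_{C^0}^\ell\,\epsilon^{\alpha-\ell}\|f\|_{C^\alpha}\quad(\ell\geq 1),
\end{equation*}
and similarly for $g$. Writing $h_\epsilon(t):=\int f_\epsilon\,g_\epsilon\circ\phi_t\,d\mu$, the core estimate is a dyadic telescoping identity
\begin{equation*}
h'_\epsilon(t)-h'_{2\epsilon}(t)=\int(f_\epsilon-f_{2\epsilon})(Xg_\epsilon)\circ\phi_t\,d\mu-\int Xf_{2\epsilon}\cdot(g_\epsilon-g_{2\epsilon})\circ\phi_t\,d\mu,
\end{equation*}
where the integration by parts on the second piece is valid because $X$ is $L^2(\mu)$-antisymmetric (thanks to the invariance of $\mu$). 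Each piece is of size $C\|X\|_{C^0}\epsilon^{\alpha+\beta-1}\|f\|_{C^\alpha}\|g\|_{C^\beta}$ by the mollifier estimates, so summing the geometric series over dyadic scales yields $\|h'_\epsilon\|_{C^0}\leq C_{\alpha,\beta}\max(1,\epsilon^{\alpha+\beta-1})\|X\|_{C^0}\|f\|_{C^\alpha}\|g\|_{C^\beta}$.

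If $\alpha+\beta>1$ the telescoping series converges as $\epsilon\to 0$, producing $h'(t):=\lim_\epsilon h'_\epsilon(t)$ with the uniform bound. To upgrade this to $h'\in C^{\alpha+\beta-1}$ I would split
\begin{equation*}
h'(t+\tau)-h'(t)=\bigl[h'-h'_\epsilon\bigr](t+\tau)+\bigl[h'_\epsilon(t+\tau)-h'_\epsilon(t)\bigr]+\bigl[h'_\epsilon-h'\bigr](t),
\end{equation*}
bound the outer terms by the dyadic tail $C\epsilon^{\alpha+\beta-1}$ and the middle term by $|\tau|\,\|h''_\epsilon\|_{C^0}\leq C|\tau|\,\epsilon^{\alpha+\beta-2}$, then optimize at $\epsilon=|\tau|$ to conclude $|h'(t+\tau)-h'(t)|\leq C|\tau|^{\alpha+\beta-1}$. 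The case $\alpha+\beta<1$ proceeds analogously but working with $h$ directly: decompose $h(t+\tau)-h(t)=[h_\epsilon(t+\tau)-h_\epsilon(t)]+[(h-h_\epsilon)(t+\tau)-(h-h_\epsilon)(t)]$, bound the first bracket by $|\tau|\|h'_\epsilon\|_{C^0}\leq C|\tau|\epsilon^{\alpha+\beta-1}$, and the second, using invariance of $\mu$ to move the $\phi$-shift onto $f_\epsilon$, by $C(\epsilon^\alpha|\tau|^\beta+|\tau|^\alpha\epsilon^\beta)\|f\|_{C^\alpha}\|g\|_{C^\beta}$; picking $\epsilon=|\tau|$ delivers $|\tau|^{\alpha+\beta}$. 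The borderline case $\alpha+\beta=1$ loses a logarithm in the geometric sum, producing the Zygmund class.

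The main obstacle is the bookkeeping of the dyadic argument on a general compact Riemannian manifold: one needs a coherent family of mollifiers for which the derivative bounds $\|X^\ell f_\epsilon\|_{C^0}\leq C\|X\|_{C^0}^\ell\epsilon^{\alpha-\ell}\|f\|_{C^\alpha}$ hold with constants depending only on $\alpha,\beta$, and the geometry of $M$. A secondary technical point is checking that the pointwise limit $\lim_\epsilon h'_\epsilon$ is genuinely the classical derivative of $h$, which one verifies by integrating $h'_\epsilon(t+s)\,ds$ over $s\in[0,\tau]$ for each $\epsilon$ and passing to the $\epsilon\to 0$ limit using the uniform convergence established above.
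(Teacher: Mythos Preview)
Your argument is correct and follows a genuinely different route from the paper's. After the same reduction to $\alpha,\beta\in(0,1)$, the paper takes a dynamical detour: for an \emph{ergodic} $\mu$ it uses Birkhoff's theorem and Poincar\'e recurrence to write $h(t)$ as a limit of averages along nearly closed orbit segments, reparametrizes each such segment as a map $\mathbb T\to M$, and thereby reduces to a lemma on the regularity of convolutions $f\star g$ on the circle, which is then proved by dyadic estimates on Fourier coefficients; general $\mu$ is handled by ergodic decomposition. Your approach stays on $M$ throughout and replaces the Fourier-side Littlewood--Paley decomposition by a physical-space one via mollifiers $f_\epsilon,g_\epsilon$, with the key cancellation coming from the antisymmetry of $X$ on $L^2(\mu)$ (which follows directly from invariance and needs no smoothness of $\mu$). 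The telescoping identity and the $h''_\epsilon$ bound via one integration by parts (so that only $\|X\|_{C^0}$, not $\|X\|_{C^1}$, appears) are both sound.

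What each buys: your route is more elementary and self-contained---no ergodic theorem, no Poincar\'e recurrence, no ergodic decomposition---and is a standard paraproduct-type argument that readers from analysis will recognize immediately. The paper's route, on the other hand, yields the sharp bound $\|h\|_{C^1}\leq C_{\alpha,\beta}\|X\|_{C^0}\|f\|_{C^\alpha}\|g\|_{C^\beta}$ with a constant depending \emph{only} on $\alpha,\beta$; in your mollifier construction the constants in $\|f-f_\epsilon\|_{C^0}\leq C\epsilon^\alpha\|f\|_{C^\alpha}$ and $\|\nabla f_\epsilon\|_{C^0}\leq C\epsilon^{\alpha-1}\|f\|_{C^\alpha}$ inevitably carry geometric data of $M$ (injectivity radius, partition of unity, or heat kernel bounds), so you get $C_{M,\alpha,\beta}$ rather than $C_{\alpha,\beta}$. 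For the paper's downstream application (the bound \eqref{eq:bderLyapH} already allows $C_{\alpha,\beta,M}$) this is harmless, but it is a genuine quantitative difference worth flagging.
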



\subsection{A first application: Regularity of the metric entropy}\label{ssec:regmetricentropy}

As an application of Theorem~\ref{teo:lyapunovH} we will obtain good regularity of the metric entropy for a large class of conservative partially hyperbolic diffeomorphisms.

Let $PH^2_{\mu,1}(M)$ be the set of $C^2$ partially hyperbolic diffeomorphisms on the compact manifold $M$ which preserve a smooth volume $\mu$ and have one dimensional center bundle. The stable, central and unstable bundles will be H\" older continuous, with the H\" older exponent depending on the contraction/expansion rates of the diffeomorphism along the bundles (see subsection~\ref{ssec:regularidadVt}). Let $PH^{2,\frac 12+}_{\mu,1}(M)$ be the subset of diffeomorphisms in $PH^2_{\mu,1}(M)$ with the property that the contraction/expansion rates along the stable, center and unstable subbundles guarantee that the partially hyperbolic splitting is $C^{\alpha}$ for some $\alpha>\frac 12$. We assume also that all the subbundles are orientable and the orientations are preserved. This is a fairly large open set inside the $C^2$ diffeomorphisms of $M$, and it contains many known examples of partially hyperbolic diffeomorphisms and $C^1$ small perturbations in dimension three: time one maps of Anosov flows, skew products over Anosov, derived from Anosov maps with the center eigenvalue close to one. The examples also work in higher dimension with an additional pinching condition on the stable and unstable spectrum.

Let us consider the metric entropy function $h_{\mu}:\mathrm{Diff}^2(M)\rightarrow\mathbb R$. It is known that restricted to $PH^2_{\mu,1}(M)$ the map $h_{\mu}$ is upper semicontinuous (see \cite{VY2019} for example). From the Pesin formula one can easily see that $h_{\mu}$ is not continuous in general, and there is an exact description of the discontinuity points. For any $f\in PH^2_{\mu,1}(M)$ we consider the $\mu$-regular points of $M$ with strictly positive and strictly negative center exponent: $A_+(f)=\{x\in M:\lambda_c(x,f)>0\}$ and $A_-(f)=\{x\in M:\lambda_c(x,f)<0\}$. Let
$$
\mathcal A=\{f\in PH^2_{\mu,1}(M): \mu(A_+(f))>0, \mu(A_-(f))>0\}.
$$
Then the discontinuity points of $h_{\mu}$ on $PH^2_{\mu,1}(M)$ are exactly the diffeomorphisms in $\mathcal A$ (this is because of the Pesin formula and the fact that the stably ergodic diffeomorphisms are dense in $PH^2_{\mu,1}(M)$, see \cite{BW2010,HHU2008}).

In conclusion $h_{\mu}$ is continuous on $PH^2_{\mu,1}(M)\setminus\mathcal A$. Let us remark that $PH^2_{\mu,1}(M)\setminus\mathcal A$ is large, it contains the $C^1$ open and $C^2$ dense set of stably ergodic diffeomorphisms (see \cite{BW2010,HHU2008}). In particular $PH^{2,\frac 12+}_{\mu,1}(M)\setminus\mathcal A$ contains the $C^2$ maps within a fairly large $C^1$ neighborhood of the time-one map of a geodesic flow on a surface of negative curvature. We claim that if we further restrict our attention to $PH^{2,\frac 12+}_{\mu,1}(M)\setminus\mathcal A$, then $h_{\mu}$ is Lipschitz along any $C^2$ path.

\begin{maintheorem}\label{cor:gflow}
Let $M$ be a compact oriented Riemannian manifold with a smooth volume $\mu$. For any $C^{2}$ one-parameter family $f_t$ of $C^{2}$ diffeomorphisms in $PH^{2,\frac 12+}_{\mu,1}(M)$, the stable, central and unstable integrated Lyapunov exponents are differentiable with respect to $t$ at every point, and the metric entropy with respect to $\mu$ is Lipschitz with respect to $t$ on $\{ f_t:t\in I\}\setminus\mathcal A$.
\end{maintheorem}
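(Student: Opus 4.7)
The plan is to express, for $f\in PH^{2,\frac12+}_{\mu,1}(M)\setminus\mathcal A$, the metric entropy $h_\mu(f)$ as the pointwise maximum of two Lipschitz functions of $f$, each being an integrated Lyapunov exponent to which Theorem~\ref{teo:lyapunovH} applies.

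First I would invoke Theorem~\ref{teo:lyapunovH}(iii) three times, once for each choice of ``middle bundle'' $E=E^\sigma$ with $\sigma\in\{s,c,u\}$, taking $F$ to be the sum of the remaining two bundles. The corresponding three-term dominated splitting in hypothesis~\ref{H} is obtained by collapsing $E^s\oplus E^c$ or $E^c\oplus E^u$ into a single subbundle. By the very definition of $PH^{2,\frac12+}_{\mu,1}(M)$, both $E^\sigma$ and its complement are of class $C^\alpha$ for some common $\alpha>1/2$, so $\alpha+\alpha>1$ and the hypothesis of Theorem~\ref{teo:lyapunovH}(iii) is met. This yields differentiability of $t\mapsto\lambda^\sigma(f_t)$ at every $t_0\in I$ for each $\sigma\in\{s,c,u\}$, with derivative controlled by the explicit bound \eqref{eq:bderLyapH}. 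Along a compact sub-interval of $I$ the family is uniformly $C^2$, hence the tangent vector fields are uniformly bounded in $C^1$, and the H\"older continuous dependence of the dominated splitting on the diffeomorphism in the $C^2$ topology yields uniform bounds on $\|\omega_{F_t}\|_{C^\alpha}$ and $\|V_{E_t}\|_{C^\alpha}$; consequently each $t\mapsto\lambda^\sigma(f_t)$ is Lipschitz on compact sub-intervals of $I$, which gives the first half of the theorem.

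Next I would translate the metric entropy into a function of these exponents via Pesin's formula. In the one-dimensional center setting, applied to the smooth volume $\mu$, it reads
\begin{equation*}
h_\mu(f)=\lambda^u(f)+\int_{A_+(f)}\lambda_c(x,f)\,d\mu.
\end{equation*}
If $f\notin\mathcal A$ then either $\mu(A_+(f))=0$, giving $h_\mu(f)=\lambda^u(f)$, or $\mu(A_-(f))=0$, in which case $\lambda_c\ge 0$ $\mu$-a.e., the integral equals $\lambda^c(f)$, and the volume identity $\lambda^s(f)+\lambda^c(f)+\lambda^u(f)=0$ gives $h_\mu(f)=\lambda^u(f)+\lambda^c(f)=-\lambda^s(f)$. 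Since the sign of $\lambda^c(f)$ determines which of $\lambda^u(f)$ and $-\lambda^s(f)$ is larger, the two cases merge into the single identity
\begin{equation*}
h_\mu(f)=\max\bigl\{\lambda^u(f),\,-\lambda^s(f)\bigr\}
\qquad\text{for every }f\in PH^{2,\frac12+}_{\mu,1}(M)\setminus\mathcal A.
\end{equation*}
As the pointwise maximum of two Lipschitz functions of $t$ (from the first step), the right-hand side is Lipschitz in $t$, proving the second half of the theorem.

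The main technical point is the uniform control along the path of the constants appearing in \eqref{eq:bderLyapH}: one needs the bundles and their associated forms $\omega_{F_t}$ and multivector fields $V_{E_t}$ to depend continuously on $f_t$ not merely in $C^0$ but in the $C^\alpha$ topology, and one must confirm that a single H\"older exponent $\alpha>1/2$ can be chosen uniformly on compact sub-intervals of $I$. Both follow from standard bunching-type estimates for the H\"older regularity of dominated splittings, which are automatic under the open conditions defining $PH^{2,\frac12+}_{\mu,1}(M)$; once this uniformity is in place, the rest is a direct combination of Theorem~\ref{teo:lyapunovH}, Pesin's formula, and elementary Lipschitz algebra.
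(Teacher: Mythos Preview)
Your proposal is correct and follows essentially the same route as the paper: apply Theorem~\ref{teo:lyapunovH}(iii) to each of the three bundles to get differentiability with a uniform derivative bound (hence Lipschitz) along the path, then use Pesin's formula and the definition of $\mathcal A$ to write $h_\mu(f_t)=\max\{\lambda^u(f_t),-\lambda^s(f_t)\}$ for $f_t\notin\mathcal A$, and conclude that the maximum of two Lipschitz functions is Lipschitz. The paper's proof is terser on the uniformity of constants, and both the paper and your proposal implicitly rely on Remark~\ref{rem:regularity} to justify invoking Theorem~\ref{teo:lyapunovH} with $r=2$ rather than the stated $r\geq 3$.
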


In view of the discussion above, an immediate corollary is the following.

\begin{maincor}\label{cor:gflow2}
Let $\phi_1$ be the time-one map of the geodesic flow on a surface of negative curvature preserving the volume $\mu$. There exists a $C^1$ neighborhood $\mathcal U$ of $\phi_1$ such that for any $C^{2}$ one-parameter family of $C^{2}$ diffeomorphisms in $\mathcal U$ preserving $\mu$, the metric entropy with respect to $\mu$ is Lipschitz with respect to the parameter.
\end{maincor}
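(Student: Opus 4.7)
The plan is to reduce Corollary~\ref{cor:gflow2} to Theorem~\ref{cor:gflow} by showing that a sufficiently small $C^1$ neighborhood $\mathcal U$ of $\phi_1$ lies inside $PH^{2,\frac{1}{2}+}_{\mu,1}(M)\setminus\mathcal A$. Once this is established, every $C^2$ volume-preserving family $f_t$ in $\mathcal U$ satisfies the hypotheses of Theorem~\ref{cor:gflow} at every parameter, and no $f_t$ lies in $\mathcal A$, so the conditional Lipschitz conclusion of that theorem becomes the unconditional Lipschitz statement claimed in the corollary.

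First I would verify that $\phi_1$ itself belongs to $PH^{2,\frac{1}{2}+}_{\mu,1}(M)\setminus\mathcal A$. The map $\phi_1$ is a $C^\infty$ volume-preserving diffeomorphism of the three-dimensional unit tangent bundle, partially hyperbolic with a smooth one-dimensional center bundle tangent to the geodesic vector field. Because the dynamics is a three-dimensional Anosov system with codimension-one weak foliations, the classical Hirsch--Pugh--Shub regularity theorem, combined with the pinching produced by negative curvature, gives H\"older regularity of the dominated splitting with exponent greater than $\frac{1}{2}$. Moreover the center Lyapunov exponent of $\phi_1$ vanishes identically, so $A_+(\phi_1)$ and $A_-(\phi_1)$ are empty and $\phi_1\notin\mathcal A$.

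Second I would invoke the two openness facts already recorded in the paper just before Theorem~\ref{cor:gflow}. The class $PH^{2,\frac{1}{2}+}_{\mu,1}(M)$ is $C^1$-open in dimension three because the contraction/expansion rates controlling the H\"older exponent of the dominated splitting vary continuously under $C^1$ perturbations. Independently, $\phi_1$ is stably ergodic by Burns--Wilkinson, so a $C^1$ neighborhood of $\phi_1$ consists entirely of $C^2$ conservative ergodic diffeomorphisms; ergodicity forces the center exponent to be constant $\mu$-a.e., ruling out simultaneously positive-measure sets $A_+$ and $A_-$. Intersecting the two neighborhoods produces the desired $\mathcal U$, a $C^1$-open neighborhood of $\phi_1$ contained in $PH^{2,\frac{1}{2}+}_{\mu,1}(M)\setminus\mathcal A$.

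Finally I would apply Theorem~\ref{cor:gflow} to any $C^2$ family $f_t$ of $C^2$ diffeomorphisms in $\mathcal U$ preserving $\mu$. The stable, central and unstable integrated Lyapunov exponents are differentiable in $t$, and since $f_t\notin\mathcal A$ for all $t$, Pesin's entropy formula reduces to $h_\mu(f_t)=\lambda_u(t)+\max(0,\lambda_c(t))$, a Lipschitz function of $t$ on the entire parameter interval (as the maximum of two Lipschitz functions). The main obstacle I anticipate is assembling the neighborhood $\mathcal U$ cleanly: the two openness arguments (continuity of H\"older bunching and Burns--Wilkinson stable ergodicity) are standard ingredients of partially hyperbolic theory, but merging them into a single $C^1$-open neighborhood on which both properties hold simultaneously for every $C^2$ conservative perturbation requires careful bookkeeping of the dependencies on $C^1$ versus $C^2$ data.
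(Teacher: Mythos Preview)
Your proposal is correct and follows exactly the approach the paper intends: the paper presents Corollary~\ref{cor:gflow2} as an immediate consequence of Theorem~\ref{cor:gflow} together with the preceding discussion, which records precisely the two openness facts you invoke (that $PH^{2,\frac{1}{2}+}_{\mu,1}(M)$ is $C^1$-open and contains a $C^1$ neighborhood of $\phi_1$, and that $\phi_1$ is stably ergodic so a $C^1$ neighborhood avoids $\mathcal A$). Your entropy formula $h_\mu(f_t)=\lambda_u(t)+\max(0,\lambda_c(t))$ is equivalent to the paper's $\max\{\lambda_u(t),-\lambda_s(t)\}$ since the three exponents sum to zero, and the paper gives no more detail than you do.
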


The above theorem shows that $h_{\mu}:PH^{2,\frac 12+}_{\mu,1}(M)\setminus\mathcal A$ is Lipschitz along any smooth path. Let us remark that in view of the Dacorogna-Moser Theorem (see \cite{DM1990}), it is reasonable to expect that if $f_0$ and $f_1$ are two $C^2$ close volume preserving diffeomorphisms, then the two maps can be joined by a $C^2$ curve of $C^2$ volume preserving diffeomorphisms $(f_t)_{t\in[0,1]}$, such that the tangent to the curve, which is a time-dependent vectorfield $X_t$, is bounded in the $C^1$ norm in terms of the initial $C^2$ distance between $f_0$ and $f_1$. This suggests that a reasonable conjecture is that restricted to $PH^{2,\frac 12+}_{\mu,1}(M)\setminus\mathcal A$, the metric entropy $h_{\mu}$ is locally Lipschitz, considering the $C^2$ topology on the set of diffeomorphisms.

Let us also comment that we do not expect that the map $h_{\mu}$ is differentiable along every smooth curve, we believe that $h_{\mu}$ may have corners when the integrated center exponent changes sign.


\subsection{Formulas for the derivatives}\label{ssec:formulas}

The formula \eqref{eq:derLyapH} gives a (not very explicit) formula for $\lambda'(0)$ when the sum of the regularities of $E$ and $F$ is greater than one. The next result gives explicit formulas for the first derivative of $\lambda$ in 0 when either $E$ or $F$ are $C^1$, and the second derivative of $\lambda$ in 0 when both $E$ and $F$ are $C^1$.

\begin{maintheorem}\label{teo:lyapunov12}
Assume that \ref{H} is satisfied for $r\geq 3$, and $X, Y, \omega_F, V_E, V'$ and $\tilde\eta$ are defined as above.
\begin{enumerate}
\item
If $F$ is $C^1$ on a neighborhood of $\mathrm{supp}(\mu)$, then $\lambda$ is differentiable in $0$ and
\begin{equation}\label{eq:derLyap1F}
\lambda'(0)=\int \LL_X\omega_F(V_E)d\mu.
\end{equation}
\item
If $E$ is $C^1$ on a neighborhood of $\mathrm{supp}(\mu)$, then $\lambda$ is differentiable in $0$ and
\begin{equation}\label{eq:derLyap1E}
\lambda'(0)=-\int \omega_F(\LL_XV_E)d\mu.
\end{equation}
\item If both $E$ and $F$ are $C^1$ on a neighborhood of $\mathrm{supp}(\mu)$, then $\lambda$ has expansion of order 2 at $t=0$:
$$
\lambda(t)=\lambda(0)+t\lambda'(0)+\frac {t^2}2\lambda''(0)+o(t^2),
$$
where
\begin{equation}\label{eq:derLyap2}
\lambda''(0)=\int_M-\LL_X\omega_F\left(\LL_X V_E\right)+\LL_Y\omega_F(V_E)-\left( \LL_X\omega_F(V_E)\right)^2+\frac 2{\tilde\eta}\LL_X\omega_F(f_*V')d\mu.
\end{equation}
\end{enumerate}
\end{maintheorem}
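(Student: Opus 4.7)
The plan starts from the invariant identity
\[
\lambda(t) = \int_M \log\bigl((h_t^*\omega_F)(f_*V_t)\bigr)\,d\mu,
\]
which follows from $\tilde\eta_t = \omega_F((f_t)_*V_t) = \omega_F((h_t)_*f_*V_t)$ combined with a change of variable using the $h_t$-invariance of $\mu$ (itself immediate from $f_t$ and $f$ both preserving $\mu$). The algebraic pillar of the argument is that $\omega_F(V_t)\equiv 1$, so $V_t - V_E \in \ker\omega_F$ pointwise; since $\ker\omega_F = F\wedge TM^{\wedge(k-1)}$ is $f_*$-invariant (because $f_*F = F$), also $f_*(V_t-V_E) \in \ker\omega_F$, and analogously $V'$ and its higher $t$-derivatives belong to $\ker\omega_F$.

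For part (i) I would split $f_*V_t = \tilde\eta V_E + f_*(V_t-V_E)$ using $f_*V_E = \tilde\eta V_E$, obtaining
\[
(h_t^*\omega_F)(f_*V_t) = \tilde\eta\,(h_t^*\omega_F)(V_E) + (h_t^*\omega_F)(f_*(V_t-V_E)).
\]
The $C^1$-regularity of $F$ yields $h_t^*\omega_F = \omega_F + t\LL_X\omega_F + o(t)$ uniformly in $C^0$. The first summand is manifestly $C^1$ in $t$, while the second has vanishing constant term by the kernel identity and a linear-in-$t$ coefficient $\LL_X\omega_F(f_*(V_t-V_E))$ which tends uniformly to zero as $t\to 0$; hence it is $o(t)$. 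Taking logarithms, integrating, and differentiating at $t=0$ gives $\lambda'(0) = \int\LL_X\omega_F(V_E)\,d\mu$. Part (ii) is the dual manipulation: rewrite
\[
(h_t)_*f_*V_t = (\tilde\eta\circ h_t^{-1})\,(h_t)_*V_E + (h_t)_*f_*(V_t-V_E),
\]
apply $\omega_F$, and note that $\log(\tilde\eta\circ h_t^{-1})$ contributes only the constant $\lambda(0)$ after integration against the $h_t$-invariant measure. The $C^1$-regularity of $E$ propagates to make $V_t$ jointly $C^1$ in $(t,p)$, so $V_t - V_E = O(t)$ in $C^1$ norm and the error term is $O(t^2)$ by the same kernel mechanism at one higher order. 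Differentiating at $t=0$ delivers $\lambda'(0) = -\int\omega_F(\LL_XV_E)\,d\mu$.

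For part (iii) one pushes both expansions to second order. Using $h_t \sim \phi^X_t\circ\phi^Y_{t^2/2}$ gives $h_t^*\omega_F = \omega_F + t\LL_X\omega_F + \tfrac{t^2}{2}(\LL_X^2\omega_F + \LL_Y\omega_F) + o(t^2)$ formally, and the analogous second-order expansion of $f_*V_t$ has successive coefficients lying in $\ker\omega_F$ (from differentiating $\omega_F(V_t)\equiv 1$). Setting $P(t) := (h_t^*\omega_F)(f_*V_t)$, multiplying the two expansions and simplifying with these kernel identities, one obtains
\[
\log P(t) = \log\tilde\eta + t\LL_X\omega_F(V_E) + \tfrac{t^2}{2}\Bigl[\LL_X^2\omega_F(V_E) + \LL_Y\omega_F(V_E) + \tfrac{2}{\tilde\eta}\LL_X\omega_F(f_*V') - (\LL_X\omega_F(V_E))^2\Bigr] + o(t^2).
\]
The main obstacle is that $\LL_X^2\omega_F$ is only pointwise defined if $\omega_F\in C^2$, stronger than our assumption. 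The resolution is integration by parts: from the Leibniz identity $\LL_X(\LL_X\omega_F(V_E)) = \LL_X^2\omega_F(V_E) + \LL_X\omega_F(\LL_XV_E)$ and the basic fact $\int\LL_Xg\,d\mu = 0$ for $C^1$ functions $g$ (obtained by differentiating the $h_t$-preservation of $\mu$ at $t=0$), one may replace $\int\LL_X^2\omega_F(V_E)\,d\mu$ by $-\int\LL_X\omega_F(\LL_XV_E)\,d\mu$, producing the stated formula. The accompanying technical subtlety, which I expect to be the most delicate step, is verifying the $o(t^2)$ remainder for the contributions involving $V_t - V_E$: this requires combining the $C^1$-regularity of $V_E$ (for spatial control) with that of $\omega_F$ (to gain one order in $t$) in a second-order refinement of the kernel argument used in parts (i) and (ii).
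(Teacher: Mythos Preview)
Your treatment of parts (i) and (ii) matches the paper's approach closely and is correct. A small over-claim: in part (ii) you assert the error term $\omega_F\bigl((h_t)_*f_*(V_t-V_E)\bigr)$ is $O(t^2)$, but since only $E$ (not $F$) is assumed $C^1$ there, $\omega_F$ may be merely continuous, and you only get $o(t)$; this is all you need for the first derivative. Similarly, you do not need $V_t$ to be $C^1$ in $p$ --- the paper uses only the $C^0$ expansion $V_t = V_E + tV' + o(t)$ from Proposition~\ref{p:Vt}.

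For part (iii) you correctly identify the obstruction ($\LL_X^2\omega_F$ undefined when $\omega_F$ is only $C^1$) and the remedy (integration by parts using that $\phi^X$ preserves $\mu$). But your execution has a genuine gap: the Leibniz identity $\LL_X(\LL_X\omega_F(V_E)) = \LL_X^2\omega_F(V_E) + \LL_X\omega_F(\LL_XV_E)$ and the vanishing of $\int\LL_X g\,d\mu$ both require the integrand $g=\LL_X\omega_F(V_E)$ to be $C^1$, whereas it is only continuous. You cannot simply ``replace'' one integral by the other; you must prove that $t\mapsto\int_M h_t^*\omega_F(V_E)\,d\mu$ is genuinely twice differentiable with the claimed second derivative. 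The paper closes this gap in Lemma~\ref{le:3403} by approximating $\omega_F$ in the $C^1$ topology by $C^2$ forms $\omega_n$. For each $\omega_n$ the second derivative exists and, after the (now legitimate) integration by parts, equals $\int_M -\LL_{X_t}(h_t^*\omega_n)(\LL_{X_t}V_E)+\LL_{Y_t}(h_t^*\omega_n)(V_E)\,d\mu$; since this expression involves only \emph{first} derivatives of $\omega_n$, it converges uniformly in $t$ as $n\to\infty$, yielding the second-order expansion for $\omega_F$ itself.

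Finally, the ``technical subtlety'' you flag --- controlling the $V_t-V_E$ contribution to $o(t^2)$ --- is actually the easier part and does not require a second-order expansion of $V_t$. Writing $V_t = V_E + tV' + o(t)$ and using that $f_*(V_t-V_E)\in\ker\omega_F$, the cross-term $(h_t^*\omega_F-\omega_F)(f_*(V_t-V_E)) = t^2\LL_X\omega_F(f_*V') + o(t^2)$ follows directly, since the $o(t)$ piece of $V_t-V_E$ meets only the $O(t)$ piece $t\LL_X\omega_F + \tilde\omega_t$ of $h_t^*\omega_F-\omega_F$.
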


Here $\LL$ is the usual Lie derivative which is well defined on $C^1$ forms and multivector fields.

This result generalizes the formulas in \cite{SW2000,R2003,D2004,DP2002}. In \cite{SW2000} the first two derivatives are computed when $f$ is a linear partially hyperbolic map on the 3-torus, so all the subbundles are one dimensional and lineal, and the perturbation is within the center-unstable direction. In \cite{R2003} the first two derivatives are computed for the case when $f$ is a linear map on the $d$-torus (eventually multiplied by a rotation), so all the bundles are lineal and of any dimension, and the perturbation can be arbitrarily, however the formulas for the derivative seem more difficult to work with. In \cite{D2004} again the first two derivatives are computed when $f$ is the time one map of the geodesic flow on a surface of constant negative curvature, so all the bundles are one dimensional and $C^{\infty}$, and any perturbation is allowed, even not conservative (the measure $\mu_t$ may depend on $t$, we will talk more about this result in the next subsection). In \cite{DP2002} $f$ is the time one map of a modified Anosov flow product with another diffeomorphism, and the perturbation is a specific one inside the center-unstable direction (the center bundle is one dimensional and $C^{\infty}$, the others may be larger); there are no derivatives computed explicitly, however there are estimates of the higher order of the central exponent showing that it is not constant.

Our result is basically a refinement of the results mentioned above, however we think that it is interesting for several reasons. First of all it assumes low regularity of the subbundles, compared with the other previous results (one subbundle is $C^1$ for the first derivative, both subbundles are $C^1$ for the second derivative). We do acknowledge however that the assumptions are still strong and do not hold for a generic diffeomorphism with a dominated splitting, with the exception of some very specific examples. A second reason why our result is interesting is that we consider any possible dimensions of the subbundles, any invariant measure $\mu$ (or course, admitting that there are few examples other than the volume), and any family of perturbations. A third reason is that we obtain fairly simple formulas (at least for the first derivative) which hold in this general situations, and this allows one to work with them further in order to study the critical points of the Lyapunov exponent for example (see subsection \ref{ssec:critical} for some remarkable results in this direction).

It is remarkable that the first derivative does not depend explicitly on $f$ or $f_t$, it only depends on the both sub-bundles $E$ and $F$ of the invariant splitting for $f$, on the invariant measure $\mu$, and on the vector field $X$ tangent to the family $h_t$. It is not hard to see that $\lambda'(0)$ is independent on the choice of $\omega_F$ and $V_E$, and it is in fact linear in the vector field $X$ (and in $\omega_f$ and $V_E$), and bounded with respect to the $C^1$ topology. The second derivative is the sum of a bilinear form in $X$ and a linear form in $Y$, and the last term does depend on $f$.

\subsection{Variable measure}\label{ssec:vmeasure}

We remark that Theorem~\ref{teo:lyapunov12} can be formulated even in the setting of invariant measures $\mu_t$ varying with the parameter $t\in I$. Of course, we need to impose some conditions on the regularity of the family of measures, at least continuity in the weak* topology..

Let us consider now the corresponding hypothesis for variable measure. It is identical with the hypothesis \ref{H}, with the only difference that now the invariant measures $\mu$ depend on $t$.

{\bf Hypothesis \referencia{H1}{(H')}} {\it The conditions (i) and (ii) are satisfied. The condition (iii) is replaced by\\
(iii') Every map $f_t$ preserves a Borel probability measure $\mu_t$, $\lim_{t\rightarrow 0}\mu_t=\mu_0:=\mu$ in the weak* topology.}

We will also have a change in the notations, this time we have
$$\lambda(t)=\lambda(f_t,E_t,\mu_t).$$

We say that {\it the family $\mu_t$ has linear response ${\mathcal R}(\varphi)$ for the continuous function $\varphi:M\rightarrow \mathbb R$}, if the application $t\mapsto \int_M\varphi d\mu_t$ is differentiable in $t=0$ and the derivative is $\mathcal R(\varphi)\in\mathbb R$. Let us point out that this condition is much weaker than the differentiability of $\mu_t$, {\it it is the differentiability only for the observable $\varphi$}. In particular, if $\varphi$ is constant, then any family of measures has linear response, and $\mathcal R(\varphi)=0$.

We obtain the following result.

\begin{maintheorem}\label{teo:response12}
Assume that \ref{H1} is satisfied for $r\geq 3$. Then:

\begin{enumerate}
\item If $F$ is $C^1$, then $\lambda$ is differentiable in $0$ if and only if the family $\mu_t$ has linear response $\mathcal R(\log\eta)$ for the function $\log\eta:M\rightarrow\mathbb R$. In this case we have
$$
\lambda'(0)=\mathcal R(\log\eta)+\int_M \LL_X\omega_F(V_E)d\mu.
$$
\item If $E$  is $C^1$, then $\lambda$ is differentiable in $0$ if and only if the family $\mu_t$ has linear response $\mathcal R(\log\eta)$ for the function $\log\eta:M\rightarrow\mathbb R$. In this case we have
$$
\lambda'(0)=\mathcal R(\log\eta)-\int \omega_F(\LL_XV_E)d\mu.
$$
\item
Suppose that $E$ is $C^1$ and $F$ is $C^2$. In addition, suppose that $\eta$ is constant and the family $\mu_t$ has linear response $\mathcal R\left(\LL_X\omega_F(V_E)\circ f\right)$ for the function $\LL_X\omega_F(V_E)\circ f:M\rightarrow\mathbb R$. Then $\lambda$ has expansion of order two at $t=0$, and
\begin{align*}
\lambda''(0)&=2\mathcal R\left(\LL_X\omega_F(V_E)\circ f\right)+\\
&+\int_M \LL_X\LL_X\omega_F(V_E)+\LL_Y\omega_F(V_E)-\left(\LL_X\omega_F(V_E)\right)^2+\frac 2{\tilde\eta}\LL_X\omega_F(f_*V')d\mu.
\end{align*}
\end{enumerate}
\end{maintheorem}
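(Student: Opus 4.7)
The plan is to decompose $\lambda(t)-\lambda(0)$ so as to separate the variation of the cocycle factor from the variation of the invariant measure:
\begin{equation*}
\lambda(t) - \lambda(0) = \int_M (\log \eta_t - \log \eta_0) \, d\mu_t + \Bigl( \int_M \log \eta_0 \, d\mu_t - \int_M \log \eta_0 \, d\mu \Bigr).
\end{equation*}
The second parenthesis divided by $t$ converges, by definition, if and only if the family $\mu_t$ admits linear response at $\log \eta = \log \eta_0$, with limit $\mathcal R(\log\eta)$.

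For parts (i) and (ii), the proof of Theorem~\ref{teo:lyapunov12} in the constant-measure case in fact establishes the uniform convergence of $\tfrac{1}{t}(\log\eta_t - \log\eta_0)$ on $M$ to a continuous function $G$, whenever $F$ (respectively $E$) is $C^1$. This uniform convergence follows from the identity $\eta_t = (f_t^*\omega_F)(V_t)$, the $C^r$ smoothness of $f_t$ in $t$ with $r\geq 3$, and the $C^1$ smoothness of $\omega_F$ on $M$ (respectively the $C^1$ dependence of $V_t$ on $t$ at $t=0$, which requires $E$ of class $C^1$, as discussed in Subsection~\ref{ssec:regularidadVt}). Given uniform convergence, $\mu_t \to \mu$ in the weak$^*$ topology implies $\tfrac{1}{t}\int_M (\log\eta_t - \log\eta_0)\, d\mu_t \to \int_M G \, d\mu$ unconditionally, and by Theorem~\ref{teo:lyapunov12} the right-hand side equals $\int_M \LL_X\omega_F(V_E) \, d\mu$ (respectively $-\int_M \omega_F(\LL_X V_E) \, d\mu$). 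Combining with the decomposition yields the iff statement and the announced formulas for $\lambda'(0)$.

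For part (iii), the constancy of $\eta$ makes the measure-variation parenthesis identically zero, so $\lambda(t) - \lambda(0) = \int_M (\log\eta_t - \log\eta_0)\, d\mu_t$. Under the stronger regularity assumptions I would expand $\log\eta_t$ uniformly on $M$ to second order, $\log\eta_t = \log\eta_0 + t\psi_1 + \tfrac{t^2}{2}\psi_2 + o(t^2)$, with continuous $\psi_1,\psi_2$ obtained by differentiating $\eta_t = (h_t^*\omega_F)\circ f\,(Df\cdot V_t)$ twice in $t$ and substituting the expansions $h_t^*\omega_F = \omega_F + t\LL_X\omega_F + \tfrac{t^2}{2}(\LL_X^2+\LL_Y)\omega_F + o(t^2)$ and $V_t = V_E + tV' + \tfrac{t^2}{2}V'' + o(t^2)$. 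A direct computation yields $\psi_1 = \LL_X\omega_F(V_E)\circ f + \tfrac{1}{\eta}f^*\omega_F(V')$ and an analogous expression for $\psi_2$. Integrating against $\mu_t$, using the linear-response hypothesis on $\LL_X\omega_F(V_E)\circ f$ and weak$^*$ convergence for the $\psi_2$-piece, gives
\begin{equation*}
\lambda(t)-\lambda(0) = t\int_M\psi_1\,d\mu + t^2\,\mathcal R\bigl(\LL_X\omega_F(V_E)\circ f\bigr) + \tfrac{t^2}{2}\int_M\psi_2\,d\mu + o(t^2).
\end{equation*}
Converting $\int_M\psi_2\,d\mu$ via the product-rule identity $\LL_X[\LL_X\omega_F(V_E)] = \LL_X^2\omega_F(V_E) + \LL_X\omega_F(\LL_X V_E)$, the $f$-invariance of $\mu$, and the relation $(f^*\omega_F)(V')(p) = \LL_X\omega_F(f_*V')(f(p))$ (so that $\tfrac{1}{\tilde\eta}\LL_X\omega_F(f_*V')$ appears with coefficient $2$ after $f$-invariance) then reorganizes everything into the stated formula for $\lambda''(0)$.

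The main obstacle lies in part (iii): one must show that, under the constancy of $\eta$, the second-order contribution involves only the single linear response $\mathcal R(\LL_X\omega_F(V_E)\circ f)$, even though the naive expansion of $\psi_1$ contains an additional summand $\tfrac{1}{\eta}f^*\omega_F(V')$ whose linear response would a priori contribute to $\lambda''(0)$. The resolution relies on the defining cocycle identity $V' - \tfrac{1}{\tilde\eta}f_*V' = \mathcal P(\LL_X V_E)$ recalled in Subsection~\ref{ssec:regularidadVt}; applying $\omega_F$ and exploiting both the constancy of $\tilde\eta=\eta$ and the $f_t$-invariance of $\mu_t$, the $V'$-contribution telescopes into the explicit second-order integrand of the theorem rather than producing an uncontrolled new linear-response term, which is precisely what allows the hypothesis on a single observable to suffice.
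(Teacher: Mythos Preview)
Your argument for parts (i) and (ii) is essentially the paper's: both approaches rest on the uniform pointwise expansion $\log\eta_t=\log\eta+tG+o(t)$ established in the proof of Theorem~\ref{teo:lyapunov12} (equations \eqref{eq:etatomega} and \eqref{eq:etatV}), followed by integration against $\mu_t$ and weak$^*$ convergence.

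In part (iii), however, you have misidentified the main difficulty. The extra summand $\frac{1}{\eta}f^*\omega_F(V')$ in your candidate $\psi_1$ is \emph{identically zero}: since $\omega_F(V_t)\equiv1$, differentiating gives $V'\in\ker\omega_F$, and because $F$ is $Df$-invariant the kernel $\ker\omega_F=F\wedge TM^{\wedge(k-1)}$ is preserved by $f_*$, so $\omega_F(f_*V')\equiv0$. Thus $\psi_1=\LL_X\omega_F(V_E)\circ f$ exactly, with no telescoping or additional linear-response hypothesis required. The paper's proof uses precisely this observation (see the line $\omega[f_*(V_t-V)]=0$ in the derivation of \eqref{eq:forsec3407}), after which the single linear-response hypothesis on $\LL_X\omega_F(V_E)\circ f$ suffices directly. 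Your proposed resolution via the cocycle identity for $V'$ and the $f_t$-invariance of $\mu_t$ is both unnecessary and not clearly correct as sketched; in particular the relation $(f^*\omega_F)(V')(p)=\LL_X\omega_F(f_*V')(f(p))$ that you write is false in general (the left-hand side vanishes, the right-hand side need not). A secondary point: you invoke a second-order expansion $V_t=V_E+tV'+\tfrac{t^2}{2}V''+o(t^2)$, but under the hypothesis that $E$ is only $C^1$, Proposition~\ref{p:Vt} yields only $V_t=V+tV'+o(t)$; fortunately the same kernel argument shows the $o(t)$ remainder in $V_t-V-tV'$ contributes only $o(t^2)$ to $\log\eta_t$, so the existence of $V''$ is not needed.
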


Let us compare our result with \cite{D2004}. In \cite{D2004} Dolgopyat considers a partially hyperbolic $f$ which is an Anosov element of a rapidly mixing abelian Anosov action, the invariant measures $\mu_t$ are $u$-Gibbs measures of the diffeomorphisms $f_t$, and he obtains the extremely remarkable result that the family $\mu_t$ is differentiable with respect to $t$ at $t=0$. He applies this result in order to obtain two derivatives of the Lyapunov exponents with respect to $u$-Gibbs measures for perturbations of the time one map of a geodesic flow on a surface of constant negative curvature. In this specific example all the three bundles are $C^{\infty}$ and one dimensional, all the corresponding functions $\eta$ are constant, and of course the measures are chosen to be $u$-Gibbs.

Theorem \ref{teo:lyapunov12} is just an extension of Dolgopyat's example from the last part of \cite{D2004} to a more general setting, specially when considering the first derivative of the Lyapunov exponent. First of all we consider any dimensions of the subbundles, we assume only $C^1$ regularity of one subbundle, and the formula which we obtain for the first derivative is simple (for the second derivative more regularity is needed and the formula is more complicated). Second of all we see that we can consider more general families of measures $\mu_t$, as long as weak* continuity is satisfied. In order to obtain the first derivative of the Lyapunov exponent we do not need the full differentiability of the family of measures $\mu_t$ is general, but only for the observable $\eta$ (one can think of it as the projection on just one coordinate), and this is trivially satisfied in the algebraic examples when $\eta$ is constant. Actually this is a necessary and sufficient condition for the existence of the first derivative of the Lyapunov exponent. The computation of the second derivative requires much more conditions, we ask for $\eta$ to be constant and $\mu_t$ to be basically differentiable, like in Dolgopyat's example (although only some "partial" differentiability may be sufficient for some specific perturbations).

\subsection{A second application: Differentiability of topological entropy.} As we said before, the accomplishment of Theorem \ref{teo:response12} is to obtain first derivatives of Lyapunov exponents in more generals situations. As an application we can obtain a remarkable result regarding the regularity of the topological entropy with respect to the map.

In the Anosov setting it is known that the topological entropy is locally constant for diffeomorphisms and smooth for flows (with respect to the map). However outside of the hyperbolic setting the results about differentiability of the topological entropy are very rare, and usually considering only specific perturbations. We are able to obtain differentiability of the topological entropy, in any direction, at a specific partially hyperbolic map: the time one map of a geodesic flow on a manifold of constant negative curvature. We also get a formula for the derivative.

\begin{maintheorem}\label{teo:responsegeo}
Let $f$ be the time one map of a geodesic flow on a manifold of constant negative curvature, and let $f_t$ be a $C^3$ family of diffeomorphisms with $f_0=f$. Then the map $t\mapsto h_{\rm top}(f_t)$ is differentiable at $t=0$, and the derivative is
\begin{equation}\label{eq:responsegeo}
\left.\frac{\partial}{\partial t}h_{\rm top}(f_t)\right|_{t=0}=-\int_M\omega_{E^{cs}}(\LL_XV_{E^u})d\mu,
\end{equation}
where $\mu$ is the Liouville measure, $X$ is the vector field tangent to the perturbation $h_t=f_t\circ f_0^{-1}$ at $t=0$, and $\omega_{E^{cs}}$ and $V_{E^u}$ are chosen as in the subsection~\ref{ssec:formulas} for the splitting $E^{cs}\oplus E^u$.
\end{maintheorem}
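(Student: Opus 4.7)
The strategy is to express $h_{\rm top}(f_t)$ as an integrated unstable Lyapunov exponent against a suitable family of invariant measures $\nu_t$, and then invoke Theorem~\ref{teo:response12}(ii). At $t=0$, because the base manifold has constant negative sectional curvature, $f$ is an element of an algebraic partially hyperbolic action on a homogeneous space: the invariant sub-bundles $E^s, E^c, E^u$ are $C^\infty$, the unstable Jacobian is a positive constant on $M$ (so the function $\eta$ attached to $E=E^u$ is constant), and the center Jacobian equals $1$ identically. The Liouville measure $\mu$ is simultaneously the unique SRB and the unique Bowen--Margulis measure, so by Pesin's formula $h_{\rm top}(f)=h_\mu(f)=\lambda(f,E^u,\mu)$. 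For $t$ small, $f_t$ remains partially hyperbolic with one-dimensional center and admits a unique measure of maximal entropy $\nu_t$, which by upper semicontinuity of metric entropy is weak$^*$ continuous at $t=0$ with $\nu_0=\mu$.

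The core technical step is to show $h_{\rm top}(f_t)=\lambda(f_t,E^u_t,\nu_t)+o(t)$. Ruelle's inequality applied to $f_t$ gives $|h_{\rm top}(f_t)-\lambda(f_t,E^u_t,\nu_t)|\le|\lambda(f_t,E^c_t,\nu_t)|$, and the right-hand side vanishes at $t=0$. A first-order expansion of the integrated center exponent along $(f_t,\nu_t)$, combined with the extremal variational property of the MME and the constancy of the center Jacobian of $f$, shows this right-hand side is in fact $o(t)$. Granted this sandwich estimate, apply Theorem~\ref{teo:response12}(ii) with $E=E^u_t$ and $F=E^{cs}_t$ along $(f_t,\nu_t)$: the unstable bundle is $C^\infty$ so the hypotheses are satisfied, and since $\log\eta$ is constant on $M$ the linear-response term $\mathcal R(\log\eta)=\partial_t\!\int\log\eta\,d\nu_t|_{t=0}$ vanishes automatically. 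Hence $\partial_t\lambda(f_t,E^u_t,\nu_t)|_{t=0}=-\int_M\omega_{E^{cs}}(\LL_XV_{E^u})\,d\mu$, which combined with the sandwich estimate yields \eqref{eq:responsegeo}.

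The main obstacle is the $o(t)$ control on the integrated center exponent of the MME. Generically, applying Theorem~\ref{teo:response12}(ii) to the one-dimensional bundle $E^c_t$ produces a linear-response formula whose vanishing is not automatic; here it is forced by the algebraic rigidity of the unperturbed geodesic flow on constant negative curvature --- simultaneously making both the unstable and center Jacobians of $f$ constant --- together with the extremal character of $\nu_t$. All other ingredients (existence and weak$^*$ continuity of the MME, Pesin and Ruelle (in)equalities, smoothness of the invariant bundles, and the clean form of $\mathcal R(\log\eta)=0$) are essentially direct consequences of the algebraic structure at $t=0$ and of the variable-measure formula in Theorem~\ref{teo:response12}.
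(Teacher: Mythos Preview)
Your overall plan is close to the paper's, but there is a genuine gap in the sandwich step. Ruelle's inequality is a one-sided bound: it tells you $h_{\nu_t}(f_t)\le \lambda^u(f_t,\nu_t)+\max\{0,\lambda^c(f_t,\nu_t)\}$ (and the analogous inequality for $f_t^{-1}$), but it gives no lower bound on $h_{\nu_t}(f_t)$ in terms of $\lambda^u(f_t,\nu_t)$. Since the MME $\nu_t$ need not have absolutely continuous conditionals on the unstable foliation, you cannot invoke the Pesin/Ledrappier--Young equality to get $h_{\nu_t}(f_t)\ge\lambda^u(f_t,\nu_t)$ either. Hence the claimed two-sided estimate $|h_{\rm top}(f_t)-\lambda^u(f_t,\nu_t)|\le|\lambda^c(f_t,\nu_t)|$ does not follow from what you wrote. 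The paper closes this gap by introducing a \emph{second} family of measures: ergodic Gibbs $u$-states $m_t$ for $f_t$, for which $h_{m_t}(f_t)\ge\lambda^u(f_t,m_t)$ does hold, and which also converge weak$^*$ to $\mu$. This yields a lower barrier $B(t)=\lambda^u(f_t,m_t)$ satisfying $B(t)\le h_{\rm top}(f_t)\le A(t):=\max\{\lambda^u(f_t,\mu_t),\lambda^{cu}(f_t,\mu_t)\}$, with $A(0)=B(0)$ and $A'(0)=B'(0)$ computed via Theorem~\ref{teo:response12}.

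There is also a looseness in your $o(t)$ control of $\lambda^c(f_t,\nu_t)$. The constancy of the center Jacobian of $f$ makes $\mathcal R(\log\eta^c)=0$ automatic, so Theorem~\ref{teo:response12}(ii) applies and gives $\partial_t\lambda^c(f_t,\nu_t)|_{t=0}=-\int_M\omega_{E^{su}}(\LL_XV_{E^c})\,d\mu$; but this integral is not obviously zero, and the ``extremal variational property of the MME'' does not force it to vanish. In the paper this derivative is shown to be zero by an explicit computation: one chooses $V_{E^c}$ to be the generator of the geodesic flow, uses $\LL_XV_{E^c}=-\LL_{V_{E^c}}X$, the Leibniz rule, the fact that $V_{E^c}$ preserves both $\mu$ and $\omega_{E^{su}}$, and that $\int_M\LL_{V_{E^c}}g\,d\mu=0$ for $C^1$ functions $g$. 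You should replace the appeal to extremality by this concrete argument.
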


The result also holds for the time-$t$ map of the suspension flow over a linear Anosov map, for $t$ irrational.

\subsection{Families of flows}\label{ssec:flows}

We can obtain similar results if we consider families of flows instead of diffeomorphisms. A splitting is dominated for a $C^1$ flow $\phi$ if it is dominated for the time-one map of the flow $\phi_1$. The integrated Lyapunov exponents associated to an invariant bundle and an invariant measure is equal to the exponent corresponding to the time-one map of the flow, and the same bundle and measure.

Adapting the hypothesis \ref{H} to families of flows we get the hypothesis 

{\bf Hypothesis \referencia{HF}{(HF)}}: {\it 
\begin{enumerate}
\item
Let $X$ be a $C^r$vector field on the compact orientable Riemannian manifold $M$ of dimension $n$, such that the corresponding flow $\phi^X$ has a dominated splitting $TM=E^1_t\oplus E^2_t\oplus E^3_t$, with all the sub-bundles oriented and the orientations preserved by $D\phi^X$, $E=E^2$ and $F=E^1\oplus E^2$ are nontrivial, $E$ has dimension $k$.
\item
Let $X_t$, $t\in I\subset\mathbb R$, be a $C^r$ family of $C^r$ vectorfields passing through $X$ ($0\in I$, $X_0=X$).
\item
All the flows $\phi^{X_t}$ preserve the same Borel probability $\mu$.
\end{enumerate}}

We will use the same notations as before, using $\phi_1^{X_t}$ instead of $f_t$: the dominated splittings $TM=E_t\oplus F_t$ invariant for $\phi^{X_t}$, the Lyapunov exponent $\lambda(t)=\lambda(\phi_1^{X_t},E_t,\mu)$, the multivector field $V_E$ and the continuous form $\omega_F$. However the vectorfield $X$ now has another meaning.

Let $X'=\left.\frac{\partial}{\partial t}X_t\right|_{t=0}$. The role of the vectorfield $X$ tangent to the family of diffeomorphisms $f_t$ will be played now by the vectorfield $X'$, the tangent to the family $X_t$.


We have the following result:

\begin{maintheorem}\label{teo:flows}
Assume that \ref{HF} is satisfied for $r\geq 3$, $F$ is $C^{\alpha}$ and $E$ is $C^{\beta}$ on a neighborhood of the support of $\mu$, for some $\alpha,\beta\in[0,1]$. Then
\begin{enumerate}
\item
If $\alpha+\beta<1$ then $\lambda(t)=\lambda(0)+O(t^{\alpha+\beta})$;
\item
If $\alpha+\beta=1$ then $\lambda(t)=\lambda(0)+O(t\log t)$;
\item
If $\alpha+\beta>1$ then $\lambda(t)=\lambda(0)+t\lambda'(0)+O(t^{\alpha+\beta-1})$. Furthermore
\begin{equation}\label{eq:derLyapHf}
\lambda'(0)=\left.\frac{\partial}{\partial t}\int_M(\phi_t^{X'})^*\omega_F(V_E)d\mu\right|_{t=0}=\left.\frac{\partial}{\partial t}\int_M\omega_F((\phi^{X'}_t)_*V_E)d\mu\right|_{t=0}.
\end{equation}
\item
If $\alpha=1$, then $\lambda(t)$ is differentiable in $t=0$ and
\begin{equation}\label{eq:derLyap1Ff}
\lambda'(0)=\int \LL_{X'}\omega_F(V_E)d\mu.
\end{equation}
\item
If $\beta=1$, then $\lambda(t)$ is differentiable in $t=0$ and
\begin{equation}\label{eq:derLyap1Ef}
\lambda'(0)=-\int \omega_F(\LL_{X'}V_E)d\mu.
\end{equation}
\item
If $\alpha=\beta=1$ then $\lambda$ has expansion of order 2 at $t=0$.
\end{enumerate}
\end{maintheorem}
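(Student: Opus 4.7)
The plan is to reduce Theorem \ref{teo:flows} to the diffeomorphism case already established in Theorems \ref{teo:lyapunovH} and \ref{teo:lyapunov12}. Set $f_t := \phi_1^{X_t}$; then $(f_t)_{t\in I}$ is a $C^r$ family of $C^r$ diffeomorphisms satisfying hypothesis \ref{H}, since $\mu$ is preserved by each $\phi^{X_t}_1$ and the invariant splitting $E_t\oplus F_t$ of the flow is also the invariant splitting of its time-one map. Because the integrated Lyapunov exponent of a flow equals that of its time-one map with respect to the same invariant bundle and measure, one has $\lambda(t) = \lambda(f_t, E_t, \mu)$ in the sense of hypothesis \ref{H}, and the H\"older regularity of the bundles $E,F$ is inherited unchanged.

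The vector field tangent to $h_t := f_t\circ f_0^{-1}$ at $t=0$ is, by the variation-of-constants formula for the non-autonomous ODE $\dot\gamma = X_t(\gamma)$,
\begin{equation*}
\tilde X(p) \;=\; \int_0^1 \bigl((\phi^X_s)_* X'\bigr)(p)\, ds,
\end{equation*}
where $X' = \partial_t X_t|_{t=0}$; the second-order tangent $\tilde Y$ is assembled analogously from $X'' = \partial_t^2 X_t|_{t=0}$ together with a double integral of commutators $[X,X']$. With these identifications, parts (i), (ii), the regularity assertion in (iii), and the existence of a second-order expansion in (vi) follow immediately from Theorems \ref{teo:lyapunovH} and \ref{teo:lyapunov12} applied to $(f_t)$ with tangents $\tilde X$ and $\tilde Y$.

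It remains to pass from $\tilde X$ to $X'$ in the explicit formulas of (iii), (iv), (v). By Fubini and linearity of the Lie derivative, it suffices to establish that the map
\begin{equation*}
s \longmapsto \int_M \bigl(\mathcal L_{(\phi^X_s)_* X'}\,\omega_F\bigr)(V_E)\, d\mu
\end{equation*}
is constant on $[0,1]$, and the symmetric statement with $\mathcal L_{(\phi^X_s)_* X'} V_E$. The essential ingredient is the $\phi^X$-invariance of $\mu$, which is supplied by hypothesis \ref{HF} at $t=0$ (all flows $\phi^{X_t}$ preserve $\mu$ for every time, so in particular $\phi^X=\phi^{X_0}$ does), together with the $\phi^X$-invariance of the subbundles $E$ and $F$. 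The pullback $(\phi^X_s)^*$ scales $\omega_F$ by a function $c_s$ and pushes $V_E$ forward by $1/c_s$ (the factors being reciprocal by the normalization $\omega_F(V_E)=1$ and the cocycle relation $c_{s+u}(p)=c_s(p)\,c_u(\phi^X_s(p))$); combining the naturality identity $\phi^*\mathcal L_Y = \mathcal L_{\phi^* Y}\phi^*$ with the invariance of $\mu$, the above integral collapses to its value at $s=0$. In case (iii), where the subbundles are only H\"older, the same manipulation must be performed on the family $t\mapsto \int_M (\phi_t^Y)^*\omega_F(V_E)\,d\mu$ \emph{before} taking the $t$-derivative, in parallel with the proof of Theorem \ref{teo:lyapunovH}(iii).

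The main obstacle is executing this last simplification under the minimal regularity assumptions of cases (iv) and (v): one cannot differentiate $V_E$ in (iv), nor $\omega_F$ in (v), so the change of variables must be carried out at the level of pullbacks/pushforwards rather than after commuting a Lie derivative into the $\mu$-integral. This forces the argument to be organized around the convolution-type expression used in Theorem~\ref{teo:lyapunovH}(iii), rather than around the cleaner infinitesimal formulas available when both bundles are $C^1$.
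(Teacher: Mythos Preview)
Your outline coincides with the paper's: pass to the time-one maps $f_t=\phi_1^{X_t}$, compute the tangent field $\tilde X=\int_0^1(\phi^X_s)_*X'\,ds$, apply Theorems~\ref{teo:lyapunovH} and~\ref{teo:lyapunov12}, and then show the derivative formulas are unchanged when $(\phi^X_s)_*X'$ is replaced by $X'$. Two points in your sketch need more care.

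First, the ``constancy in $s$'' step uses not only that $\phi^X$ preserves $\mu$, but also that $\phi^{X'}$ does. In the paper's computation for (iv) one writes $\phi_s^*\omega_F=\eta^s\omega_F$, $\phi_{-s*}V_E=(\eta^s)^{-1}V_E$, and the cross term reduces to $\int_M \LL_{X'}\eta^s\,d\mu$, which vanishes precisely because $X'$ generates a $\mu$-preserving flow. This follows since $\mathcal X^1_\mu(M)$ is a closed linear subspace and $X'=\lim_{t\to 0}(X_t-X)/t$ with each $X_t\in\mathcal X^2_\mu(M)$, but it must be stated; your proposal cites only the $\phi^X$-invariance.

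Second, in case (iii) you write ``by Fubini and linearity of the Lie derivative'', but when neither bundle is $C^1$ there is no Lie derivative available, and the map
\[
\mathcal D(Y)\;:=\;\left.\frac{\partial}{\partial t}\int_M\omega_F\bigl((\phi^Y_t)_*V_E\bigr)\,d\mu\right|_{t=0}
\]
is not obviously linear in $Y$. The paper proves linearity by approximating $V_E$ in $C^{\beta'}$ (for some $\beta'<\beta$ with $\alpha+\beta'>1$) by smooth multivector fields $V_n$, so that $\mathcal D_n(Y)=-\int_M\omega_F(\LL_YV_n)\,d\mu$ is manifestly linear, and then uses the bound \eqref{eq:bderLyapH} to show $\mathcal D_n\to\mathcal D$. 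Once $\mathcal D$ is linear and bounded on $\mathcal X^1_\mu(M)$ it commutes with the Bochner integral, giving $\lambda'(0)=\mathcal D(\tilde X)=\int_0^1\mathcal D((\phi^X_s)_*X')\,ds=\mathcal D(X')$. Your remark that the manipulation ``must be performed before taking the $t$-derivative'' is in the right spirit, but this linearity-by-approximation argument is the concrete mechanism that makes it work.
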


\subsection{Critical points of the integrated Lyapunov exponent}\label{ssec:critical}

Once we obtain formulas for the derivative of the integrated Lyapunov exponent which can be applied for large sets of diffeomorphisms, a natural question is what can we say about the critical points of $\lambda$. We say that a diffeomorphism $f$ is {\it critical (for the bundle $E$ which is part of a dominated splitting and the measure $\mu$)} if for any smooth family $f_t$, preserving the measure $\mu$ and passing through $f_0=f$, we have $\lambda'(0)=0$.

The following are some examples, the invariant measure $\mu$ is the volume, and the proofs of the claims are left to the reader:
\begin{enumerate}
\item
Linear automorphism of the torus: is critical for any bundle which is part of a dominated splitting.
\item
Time-one map of a volume preserving hyperbolic flow: critical for the central bundle, not critical for the stable and unstable bundles.
\item
Skew product over a volume preserving Anosov diffeomorphism, with rotations on the center fibers which are circles: critical for the center bundle, may be critical or not for the stable and unstable bundles.
\end{enumerate}

On the other hand, if for example $\mu$ is the Dirac measure at a fixed point, then there are no critical diffeomorphisms. This is why the study of the critical diffeomorphisms is more interesting when the invariant measure is the volume, or at least the bundle $E$ is not transversal to the support of $\mu$.

\subsubsection{Non-flat critical points.}

Our next results says basically that if $\mu$ is the volume, and a critical diffeomorphism has a $C^1$ splitting, then the critical point is non-degenerate or non-flat (the second derivative is nonzero). Given a family of diffeomorphisms $f_t$, satisfying the hypothesis \ref{H}, we denote by $\lambda_i(t)$ the integrated Lyapunov exponent of $f_t$ corresponding to $E^i_t$ with respect to the volume:
$$
\lambda_i(t)=\lambda(f_t,E_t^i,\mu)=\int_M\log\|Df_t^{\wedge \dim(E^i)}|_{E_t^i}\|d\mu.
$$

\begin{maintheorem}\label{teo:l2nonzero}
Assume  that $f$ is a $C^3$ volume preserving diffeomorphism on the compact manifold $M$. Assume that $f$ has a dominated splitting $TM=E^1\oplus E^2\oplus E^3$ which is $C^1$, with $E^2$ and $E^3$ nontrivial.

Then there exists a family of $C^{\infty}$ diffeomorphisms $h_t$, with $f_t=h_t\circ f$ satisfying the hypothesis \ref{H} for $r=3$ and $\mu$ equal to the volume, such that
\begin{equation}
\lambda_3''(0)<0\qquad \mbox{ and }\qquad\lambda_2''(0)>0.
\end{equation}
\end{maintheorem}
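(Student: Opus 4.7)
The plan, inspired by the Shub--Wilkinson construction, is to take for $h_t$ the flow of a compactly supported, volume-preserving vector field $Z$ that, near a chosen point, acts as an infinitesimal rotation mixing the bundles $E^2$ and $E^3$. Fix a point $p\in\mathrm{supp}(\mu)$ and choose a chart around $p$ in which $\mu$ coincides with Lebesgue measure (by Moser's theorem applied to the smooth volume $\mu$) and such that at $p$ the subspaces $E^1(p), E^2(p), E^3(p)$ align with coordinate subspaces. Pick unit coordinate vectors $e_2\in E^2(p)$ and $e_3\in E^3(p)$, let $(x_2,x_3)$ be the coordinates along them, and $y$ the remaining coordinates. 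Define
\begin{equation*}
Z \;=\; \psi_1(y)\,\psi_2\bigl(x_2^2+x_3^2\bigr)\,\bigl(-x_3\partial_{x_2}+x_2\partial_{x_3}\bigr),
\end{equation*}
with $\psi_1, \psi_2\geq 0$ smooth bumps supported in a small neighborhood of the origin. Rotational invariance of $\psi_2$ in $(x_2,x_3)$ gives $\di Z=0$, so $h_t:=\phi_t^Z$ is a $C^{\infty}$ family of volume-preserving diffeomorphisms. Setting $f_t:=h_t\circ f$ yields a family satisfying \ref{H} for $r=3$ with $\mu$ the volume; the tangent vector field is $X=Z$, and because $h_t$ is a genuine flow of a time-independent vector field the second-order correction is $Y=0$.

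Because both $E^i$ and $F^i:=\bigoplus_{j\neq i}E^j$ are $C^1$, Theorem~\ref{teo:lyapunov12}(3) applies for $i\in\{2,3\}$ and together with $Y=0$ gives
\begin{equation*}
\lambda_i''(0)\;=\;\int_M\Bigl[-\LL_Z\omega_{F^i}(\LL_Z V_{E^i}) - (\LL_Z\omega_{F^i}(V_{E^i}))^2 + \tfrac{2}{\tilde\eta_i}\LL_Z\omega_{F^i}(f_*V'_i)\Bigr]\,d\mu,
\end{equation*}
with $V'_i$ determined from $\LL_Z V_{E^i}$ by the operator inversion introduced in Subsection~\ref{ssec:formulas}.

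The sign is extracted by comparison with the matrix example from the introduction, applied to $A=Df(p)$ restricted to $E^2(p)\oplus E^3(p)$ and to $\mathfrak X=DZ(p)$, the infinitesimal rotation generator in the $(e_2,e_3)$-plane with $\mathfrak Y=0$. By \eqref{eq:lambda''} together with the computation immediately following it, that model gives $[\log\eta_3]''(0)=(\eta_3+\eta_2)/(\eta_2-\eta_3)<0$ and, by the symmetry $\eta\leftrightarrow\nu$, $[\log\eta_2]''(0)=(\eta_2+\eta_3)/(\eta_3-\eta_2)>0$, using the pointwise bound $\eta_3(p)>\eta_2(p)>0$ furnished by the dominated splitting with orientation preservation. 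Shrinking the support of $\psi_1\psi_2$ to a $\delta$-ball about the origin and using the $C^1$ regularity of the splitting to freeze the geometric data at $p$, the leading $O(\delta^{\dim M})$ contribution to $\lambda_i''(0)$ is this frozen matrix quantity times a positive constant $C_\delta$ depending on $L^2$-type integrals of $\psi_1\psi_2$; the sub-leading corrections are $o(\delta^{\dim M})$ and respect the volume-preservation constraint $\lambda_1''(0)+\lambda_2''(0)+\lambda_3''(0)=0$. For $\delta$ small enough one concludes $\lambda_3''(0)<0$ and $\lambda_2''(0)>0$. The case $\dim E^i>1$ reduces to the one-dimensional one by choosing $e_2,e_3$ as unit vectors in $E^2(p),E^3(p)$ so that the rotation affects only a single factor of each multivector $V_{E^i}$.

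The most delicate step is verifying that the frozen-at-$p$ matrix computation indeed captures the leading behavior of $\lambda_i''(0)$. The principal obstacle is the coupling term $\tfrac{2}{\tilde\eta_i}\LL_Z\omega_{F^i}(f_*V'_i)$, where $V'_i=\sum_{n\geq 0}(f_*/\tilde\eta_i)^n\,\mathcal P(\LL_Z V_{E^i})$ couples the localized perturbation with distant iterates through a Neumann series whose convergence rests on the contraction of $f_*/\tilde\eta_i$ on $F^i\wedge (E^i)^{\wedge(\dim E^i-1)}$, a consequence of domination. In the Dirac-measure case at a fixed point this term recovers exactly the matrix example, but for a general $\mu$ one must argue that the asymmetry in signs between $\lambda_3''(0)$ and $\lambda_2''(0)$ is produced precisely by this coupling at leading order. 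In parallel, since the splitting is only $C^1$, the Lie derivatives of $V_{E^i}$ and $\omega_{F^i}$ in the formula are only continuous, so controlling the $O(\delta)$ errors from the variation of the splitting across the $\delta$-ball and showing they do not spoil the sign structure requires care.
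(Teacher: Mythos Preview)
Your setup matches the paper's: a compactly supported Hamiltonian-type vector field near a point $p$, rotating a pair of directions in $E^2(p)\times E^3(p)$, with $h_t$ its flow, $Y=0$, and the formula of Theorem~\ref{teo:lyapunov12}(iii). The gap is entirely in the coupling term $\tfrac{2}{\tilde\eta_i}\LL_X\omega_{F^i}(f_*V_i')$, and your guiding heuristic for it is misleading.

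First, your Neumann series $V_i'=\sum_{n\ge 0}(f_*/\tilde\eta_i)^n\mathcal P(\LL_Z V_{E^i})$ is valid only for $i=3$: on $F^3\wedge (E^3)^{\wedge(k-1)}$ the operator $f_*/\tilde\eta_3$ is a genuine contraction because $E^3$ dominates $E^1\oplus E^2$. For $i=2$ the operator $f_*/\tilde\eta_2$ on $F^2\wedge (E^2)^{\wedge(k-1)}$ is \emph{hyperbolic}, and Proposition~\ref{le:formulaV'} gives the two-sided series
\[
V_2'=\sum_{n\ge 0}\Bigl(\tfrac{f_*}{\tilde\eta_2}\Bigr)^{n}\mathcal P_1(\LL_X V_{E^2})\;-\;\sum_{n\ge 1}\Bigl(\tfrac{f_*}{\tilde\eta_2}\Bigr)^{-n}\mathcal P_3(\LL_X V_{E^2}).
\]
This asymmetry between the two cases, not a frozen resolvent, is what produces the sign difference.

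Second, the frozen-matrix prediction $(\eta_3+\eta_2)/(\eta_2-\eta_3)$ is not what actually emerges. The paper obtains $\lambda_3''(0)\approx -Kr^d$ and $\lambda_2''(0)\approx +Kr^d$ with $K=\int\bigl(\partial^2 H/\partial x_1\partial x_{l+1}\bigr)^2\,d\mu>0$ \emph{independent} of $\eta_2,\eta_3$. The mechanism is a return-time argument, and for it you must take $p$ \emph{non-periodic}, which you never invoke. With $p$ non-periodic, the first return time $t_r$ of $B_r$ under $f$ tends to infinity as $r\to 0$. For $i=3$, each forward summand $(f_*/\tilde\eta_3)^nA$ is supported in $f^n(B_r)$, disjoint from $\mathrm{supp}(\LL_X\omega_{F^3})\subset B_r$ for $1\le n<t_r$; the coupling integral is therefore $O(r^d\nu^{t_r})$ and drops out at leading order, giving $\lambda_3''(0)\approx -Kr^d$ from the local terms alone. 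For $i=2$, the rotation avoids $E^1$ so $\mathcal P_1(\LL_X V_{E^2})=O(r)$ is negligible; in the backward $E^3$-series, after the extra factor $f_*/\tilde\eta_2$ the $n=0$ piece is supported in $B_r$ itself and contributes exactly $+2Kr^d$, while all further backward terms are killed by the same return-time bound. Thus $\lambda_2''(0)\approx -Kr^d+2Kr^d=+Kr^d$.

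In short, the missing ideas are: (a) choose $p$ non-periodic and exploit $t_r\to\infty$ to discard all but finitely many terms of the series; (b) for $E^2$ use the two-sided hyperbolic inversion and observe that the zeroth backward term lands back in the support and flips the sign. The matrix example points you to the right perturbation but not to the right mechanism for the volume measure.
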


Theorem~\ref{teo:l2nonzero} generalizes classical results obtained previously by Shub-Wilkinson, Ruelle, Dolgopyat and Dolgopyat-Pesin (see \cite{SW2000,R2003,D2004,DP2002}). Like in the papers mentioned above, it can be used in order to remove zero exponents for volume preserving partially hyperbolic diffeomorphisms by arbitrarily small $C^{\infty}$ perturbations. In particular one can obtain nonuniform hyperbolicity, as well as pathological center and intermediate foliations, by doing arbitrarily small $C^{\infty}$ perturbations of diffeomorphisms with $C^1$ dominated splittings. For example this is the case of partially hyperbolic automorphisms on nilmanifolds, or products of volume preserving codimension one Anosov maps with rotations.

It seems very probable that the result can be adapted to more general situations, for example if we assume that only $E^2$ and $E^3$ are smooth, and uses a perturbation in the direction of $E^2\oplus E^3$. For example this could be the case of skew products over volume preserving codimension one Anosov maps, where the fibers are circles and the fiber maps are rotations. It is worth mentioning that this construction was already known in the $C^1$ topology from \cite{BB2003}, so the novelty here is the use of $C^{\infty}$ small perturbations.

\subsubsection{Critical points and rigidity.}

The last result says that if again $\mu$ is the volume, and the critical diffeomorphism has a sufficiently smooth splitting forming two transversal foliations, then the critical point is rigid in the sense that the volume disintegrates as a true product along the two complimentary foliations.

\begin{maintheorem}\label{teo:rigidity}
Assume  that $f$ is a $C^3$ volume preserving diffeomorphism on the compact manifold $M$. Assume that $f$ has a dominated splitting $TM=E^1\oplus E^2\oplus E^3$, $E:=E^2$ and $F:=E^1\oplus E^3$  integrate to complimentary foliations $\mathcal W^E$ and $\mathcal W^F$. Assume also that $F$ is $C^1$, $\dim E=1$, and the foliation $\mathcal W^E$ has $C^2$ leaves, it is absolutely continuous, and the densities of the disintegrations of the volume along the $\mathcal W^E$-leaves are $C^1$ along the $\mathcal W^E$-leaves.

If $f$ is a critical diffeomorphism for $E$ and the volume, then the disintegrations of the volume along $\mathcal W^E$ are invariant under $\mathcal W^F$-holonomy.
\end{maintheorem}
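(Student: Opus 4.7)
The key input is Theorem~\ref{teo:lyapunov12}(i), which, since $F$ is $C^1$, gives $\lambda'(0) = \int_M \LL_X\omega_F(V_E)\,d\mu$ for every smooth volume-preserving family with tangent $X$. In a chart $U$ trivializing both foliations, with coordinates $(x,y_1,\dots,y_{d-1})$ so that $\mathcal W^E = \{y=\mathrm{const}\}$ and $\mathcal W^F = \{x=\mathrm{const}\}$, write $V_E = v\,\partial_x$, $\omega_F = a\,dx$ (with $av\equiv 1$, so $a,v$ are $C^1$), and $d\mu = \rho\,dx\,dy$ with $\rho$ of class $C^1$ in $x$ by the hypothesis on leafwise disintegrations. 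A direct computation gives
\[
\LL_X\omega_F(V_E) \;=\; v\,X(a) + \partial_x X_0 \;=\; X(\log a) + \partial_x X_0,
\]
where $X_0 = dx(X)$ and I used $v\partial_j a = \partial_j\log a$ (valid for $j = x$ or $j = y_i$, since $a$ is $C^1$ and $v=1/a$). For $\mu$-divergence-free $X$ the term $\int X(\log a)\,d\mu$ vanishes by the divergence theorem, and an integration by parts in $x$ turns the remaining term into $-\int_U X_0\,\partial_x\log\rho\,d\mu$. Hence criticality is equivalent to
\[
\int_U X_0\,\partial_x\log\rho\,d\mu \;=\; 0 \quad\text{for every smooth $\mu$-divergence-free $X$ compactly supported in } U.
\]

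\textbf{Extracting the factorisation.} To exploit this identity I would test with Hamiltonian-type fields tangent to the plane spanned by $\partial_x$ and $\partial_{y_i}$: for each $\psi\in C_c^\infty(U)$ and each $i\in\{1,\dots,d-1\}$, the field $W_i = (\partial_{y_i}\psi)\partial_x - (\partial_x\psi)\partial_{y_i}$ is smooth, compactly supported, and Lebesgue-divergence-free. Hence $X := W_i/\rho$ satisfies $\mathrm{div}_\mu X = 0$, because $\rho X = W_i$ is Lebesgue-divergence-free. Substituting $X_0 = \partial_{y_i}\psi/\rho$ in the displayed identity cancels a factor of $\rho$ and gives $\int_U (\partial_{y_i}\psi)\,\partial_x\log\rho\,dx\,dy = 0$ for every $\psi$ and every $i$. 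A distributional integration by parts in $y_i$ then forces $\partial_{y_i}\partial_x\log\rho = 0$, and since $\partial_x\log\rho$ is continuous this means $\partial_x\log\rho$ depends only on $x$; locally $\rho(x,y) = A(x)B(y)$. The conditional density on each $\mathcal W^E$-leaf in $U$ then equals $A/\int A$, independent of the transverse coordinate, which is exactly invariance of the disintegration under $\mathcal W^F$-holonomy. Covering $\mathrm{supp}(\mu)$ by such charts completes the argument.

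\textbf{Main obstacle.} The test field $X = W_i/\rho$ is only $C^1$ in $x$ and merely continuous in $y$, whereas the formula of Theorem~\ref{teo:lyapunov12}(i) requires $C^1$ regularity of the perturbation. The hard part will be to justify that criticality still yields the key identity for this singular test field. I plan to handle this by approximation: mollify $\rho$ in the $y$-directions to obtain $\rho^\epsilon$ smooth in $y$ and $C^1$ in $x$, with $\rho^\epsilon\to\rho$ uniformly; set $X^\epsilon := W_i/\rho^\epsilon$ (now $C^1$); and solve a Poisson-type problem on the compact manifold $M$ for a smooth correction $Y^\epsilon$ with $\mathrm{div}_\mu Y^\epsilon = -\mathrm{div}_\mu X^\epsilon$ and $\|Y^\epsilon\|_{C^1}\to 0$ (the solvability condition is automatic from $\int_M \mathrm{div}_\mu X^\epsilon\,d\mu = 0$). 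Applying criticality to the smooth divergence-free field $X^\epsilon + Y^\epsilon$ and using the $C^1$-continuity of the functional $X\mapsto \int_M\LL_X\omega_F(V_E)\,d\mu$ (guaranteed because $\omega_F$ is $C^1$) lets me pass to the limit and recover the identity for the singular $X$. Making the smallness of $Y^\epsilon$ and the limit passage rigorous is the most delicate point.
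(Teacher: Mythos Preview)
Your coordinate approach is genuinely different from the paper's and would be attractive if the hypotheses were a bit stronger, but as written there is a real gap at the very first step: you assume a chart $(x,y_1,\dots,y_{d-1})$ in which \emph{both} foliations are simultaneously straightened, so that $V_E=v\,\partial_x$ and $\omega_F=a\,dx$. Such a bifoliated chart has at best the regularity of the worse of the two foliations, and the theorem does \emph{not} assume that $E$ is $C^1$---only that $F$ is $C^1$ and that $\mathcal W^E$ has $C^2$ leaves with leafwise-$C^1$ conditional densities (the remark after the theorem explicitly contemplates $E$ being the unstable bundle of a $C^2$ map, which is typically only H\"older). If the bifoliated chart is merely a homeomorphism, then $\partial_x X_0$, the expression $d\mu=\rho\,dx\,dy$, and the pullback of your test fields $W_i$ to $M$ all lose meaning; in a $C^1$ chart adapted only to $\mathcal W^F$ one has $V_E=v_0\partial_x+\sum_i v_i\partial_{y_i}$ with the transverse components $v_i$ only continuous, and your formula for $\LL_X\omega_F(V_E)$ acquires an extra term $a\sum_i v_i\,\partial_{y_i}X_0$ that you cannot simply integrate away.

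Your identified ``main obstacle'' (that $X=W_i/\rho$ is not $C^1$) is a second, independent difficulty, and the proposed fix is shakier than you suggest. Even granting a $C^1$ bifoliated chart, the coordinate field $W_i/\rho^\epsilon$ pulls back to only a $C^0$ field on $M$ (the chart's derivative is merely $C^0$), and solving $\mathrm{div}_\mu Y^\epsilon=-\mathrm{div}_\mu(W_i/\rho^\epsilon)$ with $\|Y^\epsilon\|_{C^1}\to 0$ needs H\"older control on the right side, which involves $\partial_{y_i}\rho$---not available since $\rho$ has no assumed transverse regularity.

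The paper avoids both problems by never using a bifoliated chart. It rewrites $\int_M\LL_X\omega(V)\,d\mu$ via Cartan's formula and a single integration by parts \emph{along the $\mathcal W^E$-leaves} (using only the leafwise $C^1$ density $\rho$) to get $\int_M\alpha(X)\,d\mu=0$ for the continuous $1$-form $\alpha=i_Vd\omega+d\log(\rho\|V\|)(V)\,\omega$. All derivatives are now off $X$, so a short lemma shows $\alpha$ is exact. The holonomy invariance then drops out by integrating $\alpha$ around a rectangle made of one $\mathcal W^E$-piece and one $\mathcal W^F$-piece on each side: the $\mathcal W^E$-integrals give $\log(\rho\|V\|)$ at the corners, and the $\mathcal W^F$-integrals give the log of the holonomy Jacobian (computed via $\omega=f\,dg$ with $g$ constant on $\mathcal W^F$-leaves, using only that $F$ is $C^1$). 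This intrinsic route is what lets the argument go through with $E$ merely H\"older.
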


Les us make a few remarks on this result.

\begin{rem}
The condition required on the bundle $E$ is satisfied if $E$ is $C^1$, or more generally if $E$ is the unstable (or stable) bundle of a $C^2$ diffeomorphism.
\end{rem}
\begin{rem}
The disintegrations of the volume along $\mathcal W^E$ are invariant under $\mathcal W^F$-holonomy if and only if the disintegrations of the volume along $\mathcal W^F$ are invariant under $\mathcal W^E$-holonomy, or we say that the volume is a "true product".
\end{rem}
\begin{rem}
The conclusion of Theorem~\ref{teo:rigidity} is similar in some sense to the "Invariance Principle"-type results, see for example \cite{AV2010,ASV2013}, etc. In these results zero center exponents would imply that the disintegrations along the center foliations are invariant under the stable and unstable holonomies. It is interesting that the criticality of an exponent will also imply a similar conclusion (of course we require stronger regularity assumptions).
 
\end{rem}

\subsection{A third application: Rigidity of critical points for Lyapunov exponents for conservative Anosov surface diffeomorphisms.} The stable and unstable bundles of area preserving Anosov diffeomorphisms in dimension 2 are $C^{2-}$ (that means diffeomorphisms of class $C^r$, for all $0\leq r<2$), so the stable and unstable Lyapunov exponents $\lambda_s$ and $\lambda_u$ (with respect to the area) are differentiable with respect to the parameter along one-parameter families. We obtain the following corollary.

\begin{maincor}\label{cor:rigidityA}
The critical diffeomorphisms for the unstable (stable) Lyapunov exponent with respect to the area, in the space of $C^{\infty}$ area preserving Anosov diffeomorphisms of the two-torus homotopic to the linear map $L$, are $C^{\infty}$ conjugated to $L$ (in particular they are the global maximum).
\end{maincor}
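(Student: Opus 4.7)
The plan is to use Theorem~\ref{teo:rigidity} to deduce that $\log J^u(f)$ is smoothly cohomologous to a constant, and then invoke the de~la~Llave--Marco--Moriy\'on smooth rigidity theorem for surface Anosov systems to obtain the $C^{\infty}$ conjugacy to $L$. Assume $f$ is critical for $\lambda_u$. Since $f$ is a $C^{\infty}$ area preserving Anosov diffeomorphism of $\mathbb T^2$, the bundles $E^s,E^u$ are $C^{2-}$ (so $F:=E^s$ is in particular $C^1$), the leaves of $\mathcal W^u$ are $C^{\infty}$, $\mathcal W^u$ is absolutely continuous, and the conditional densities of the area on unstable leaves are $C^{\infty}$ along the leaves (the area is SRB). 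All hypotheses of Theorem~\ref{teo:rigidity} (with $E=E^u$, $F=E^s$, $\dim E=1$) hold, so criticality of $f$ forces the \emph{true product structure}: the disintegration of area along $\mathcal W^u$ is $\mathcal W^s$-holonomy invariant.

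Working in local foliation coordinates $(x,y)$ with unstable leaves $\{y=\text{const}\}$ and stable leaves $\{x=\text{const}\}$, invariance of both foliations forces $f(x,y)=(f_1(x),f_2(y))$, and the area has the form $\alpha(x)\beta(y)\,dx\wedge dy$. Area preservation separates the variables,
\begin{equation*}
\frac{\alpha(f_1(x))\,f_1'(x)}{\alpha(x)} \;=\; c \;=\; \frac{\beta(y)}{\beta(f_2(y))\,f_2'(y)}
\end{equation*}
for some positive constant $c$; taking logarithms yields the local cohomological identity $\log J^u(f)=\log c+\log\alpha-(\log\alpha)\circ f$. Normalizing $\alpha$ consistently across charts via the stable holonomy and telescoping along the orbit of any $n$-periodic point $p$ produces the matching of Birkhoff sums $\sum_{k=0}^{n-1}\log J^u(f)(f^kp)=n\log c$. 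By Livsic's theorem, $\log J^u(f)$ is H\"older cohomologous on $M$ to the single constant $\log c$.

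The constant $c$ is pinned down by entropy considerations. Integrating the coboundary identity against any $f$-invariant Borel probability $\nu$ gives $\int\log J^u(f)\,d\nu=\log c$. Choosing $\nu=\mu$ (the area) and applying Pesin's formula, $\log c=\lambda_u(f,\mu)=h_\mu(f)\le h_{\mathrm{top}}(f)$; Ruelle's inequality combined with the variational principle gives $h_{\mathrm{top}}(f)=\sup_\nu h_\nu(f)\le\sup_\nu\int\log J^u(f)\,d\nu=\log c$; finally, topological invariance of entropy under the Franks--Manning conjugacy to $L$ gives $h_{\mathrm{top}}(f)=h_{\mathrm{top}}(L)=\log|\lambda^u_L|$. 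Combining these forces $c=|\lambda^u_L|$, so the unstable periodic multipliers of $f$ coincide with those of $L$.

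The classical smooth rigidity theorem of de~la~Llave--Marco--Moriy\'on for surface Anosov diffeomorphisms upgrades this matching of periodic data (via the stable/unstable leafwise bootstrap of the Livsic coboundary to $C^{\infty}$) to a $C^{\infty}$ conjugacy between $f$ and $L$. In particular $\lambda_u(f)=\log|\lambda^u_L|$ is the global maximum of $\lambda_u$ on the class, so every critical point is a global extremum. The stable exponent case follows by symmetry, applying the argument to $f^{-1}$. The main obstacle is the gluing step that promotes the local telescoping identity (valid chart by chart) to a single globally constant periodic datum, or equivalently to a global H\"older coboundary; once this globalization is in place the remainder is a careful orchestration of the standard Pesin, Ruelle, variational principle, Livsic, and de~la~Llave smooth rigidity machinery.
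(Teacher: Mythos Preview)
Your proof and the paper's both begin by invoking Theorem~\ref{teo:rigidity} to extract the product structure of the area from criticality; after that the two arguments diverge. The paper works directly with the holonomies: since the $\mathcal W^u$-conditional densities of the area are uniformly $C^\infty$ along unstable leaves and are $\mathcal W^s$-holonomy invariant, the Jacobian of the stable holonomy between unstable leaves (a ratio of two such densities) is uniformly $C^\infty$; in dimension one this forces the stable holonomy itself to be uniformly $C^\infty$, and Journ\'e's lemma then upgrades the stable foliation to $C^\infty$. At that point the smooth-foliation rigidity of \cite{HK1990} or \cite{LMM1986} finishes the job, with no cohomological equation and no periodic-data matching needed.

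Your route through a cohomological identity for $\log J^u(f)$ has a genuine gap at precisely the step you flag. The local separation of variables in a pair of bi-foliated charts gives an identity of the form $\log J^u(f)=\log c+\log\alpha-(\log\alpha')\circ f$, but the ``constant'' $c$ and the densities $\alpha,\alpha'$ depend on the choice of source and target charts (your displayed formula already conflates $\alpha$ with $\alpha'$). Telescoping along a periodic orbit of length $n$ cancels the chart-dependent density terms and shows that the log unstable multiplier equals $\sum_{k=0}^{n-1}\log c_k$; however nothing in the argument forces $\tfrac{1}{n}\sum_k\log c_k$ to be the \emph{same} number for every periodic orbit. ``Normalizing $\alpha$ via the stable holonomy'' is consistent along a single stable leaf, but the stable leaves are dense lines and making such a normalization globally single-valued is exactly the obstruction you have to kill --- it is a cocycle condition on the stable holonomy, not a triviality. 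Without that globalization you cannot invoke Liv\v sic, and the bridge to matching periodic data collapses. The paper's Journ\'e argument sidesteps this completely by never needing a global transfer function: smoothness of the holonomy Jacobian is a local statement, and Journ\'e assembles the local smoothness into global $C^\infty$ regularity of the foliation.
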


This corollary answers a conjecture from \cite{GT2014}. In fact the question posed in \cite{GT2014} is much weaker, they asked wether local maximality of the unstable exponent implies rigidity. A local maximum of the unstable exponent is automatically a critical point.

\subsection{Some further questions}

In this subsection we will mention some further questions which we consider interesting.

\begin{enumerate}
{\it 
\item
How optimal are our results? The results on the regularity of invariant bundles in terms of the contraction and expansion rates are in general optimal, so the $\alpha$ and $\beta$ from our hypothesis are finite, and in general small. Our method seems to be limited in the sense that the maximum regularity of the Lyapunov exponent which we can obtain is $\alpha+\beta$. But is this indeed optimal? We don't know in fact any example of a $C^{\infty}$ family of $C^{\infty}$ diffeomorphisms with a dominated splitting such that the integrated Lyapunov exponent corresponding to a sub-bundle is not $C^{\infty}$. Does such an example exist?
\item
Our formulas for the first derivative of the Lyapunov involve only the two bundles of the dominated splitting (and the measure). Thus the problem of finding and understanding the critical points translates into a purely geometric/analytic question. If the two bundles are $C^1$ and integrable, and the measure is the volume, criticality means that the volume decomposes as a "true product" along the 2 foliations. Does a similar statement hold if the two bundles are only H\"{o}lder, with the sum of the exponents bigger than 1? What about if one is H\"{o}lder and one is smooth? What happens if the bundles are not integrable?
\item
If $f$ is partially hyperbolic, critical for the unstable bundle and the volume, and the splitting is $C^1$, is it true that $f$ is a (local) maximum for the unstable Lyapunov exponent? The proof of Theorem~\ref{teo:l2nonzero} suggests that this is the case. For many perturbations supported on small enough neighborhoods of non-periodic points the second derivative of the unstable Lyapunov exponent is negative (recall that the second derivative is bilinear in $X$).
\item
One could definitely obtain better regularity results for the Lyapunov exponents if one considers special families of perturbations. Is it possible to apply this remark in order to remove zero exponents in new and interesting situations?
\item
It seems possible to obtain further results in the case of variable measures, assuming only H\"{o}lder regularity of the bundles, but assuming in exchange better regularity of the invariant measures with respect to the parameters. Can one obtain results in this direction for relevant dynamical measures, like the SRB measures, Gibbs u-states, or measures of maximal entropy?}

\end{enumerate}

\subsection{Manuscript organization} In the next section we provide  some definitions and some preparative results. We begin giving more details about the definition of the $k$-form $\omega_F$ and the $k$-multivector field $V_E$  and the dependence of the regularity from the smoothness  of $F$ and $E$ respectively (see subsection~\ref{ssec:omegaV}). Then (subsection~\ref{ssec:XY})  we describe the family of ``tangent'' vectors fields $X$ and $Y$ for the family $h_t=f_t\circ f^{-1}$ and their the rol in the first (and second) order ``Taylor expansion'' (on  local charts) for the family $f_t$. We prove also that the flow $\phi^X_t$ (resp. $\phi^Y_t$) generated by $X_t$ (resp. by $Y_t$) preserves $\mu$. In this proof appears for the first time, the connection with the Lie derivative. Using this information we obtain a ``Taylor expansion'' for the maps $t\to (f_t)^*\omega_F$ and $t\to (f_t)_*V_E$ (see subsection~\ref{ssec:taylor}) and finally we investigate the regularity of of the map $t\mapsto V_t$ which is equivalent to finding the regularity of the map $t\mapsto E_t$ and we obtain some formulas for their derivatives (subsection~\ref{ssec:regularidadVt}).

Theorem~\ref{teo:flow} is proved in Section~\ref{sec:regularidad}. In Section~\ref{sec:holder} we prove Theorem~\ref{teo:lyapunovH} and Theorem~\ref{cor:gflow}. The obtention of the formulas for the derivatives and the proof of Theorem~\ref{teo:lyapunov12} are presented in Section~\ref{sec:demteoD}. In Section~\ref{sec:varmeasure} we deal with the case when the invariant measure $\mu_t$ depends on the map $f_t$ and we prove Theorem~\ref{teo:response12} and Theorem~\ref{teo:responsegeo}. Section~\ref{sec:flows} is devoted to flows  and we present the proof for Theorem~\ref{teo:flows}. In section~\ref{sec:nonvanish} we study the case of non-flat critical points, that means when non-vanishing of the second derivative, and we  prove Theorem~\ref{teo:l2nonzero}. Section~\ref{sec:critandrig} is dedicated to the study of critical points and rigidity. There we prove Theorem~\ref{teo:rigidity} and Corollary~\ref{cor:rigidityA}.


\section{Definitions and preliminary results}\label{sec:setting}

\subsection{The form $\omega_F$ and the multi--vector fields $V_t$. }\label{ssec:omegaV}
	
Let $TM=E_t\oplus F_t$ be continuous splittings (on $M$ and the parameter $t$) such that $E=E_0$ is $C^{\beta}$ and $F=F_0$ is $C\sp {\alpha}$ with $\alpha, \beta\geq0$. Let $k:=\dim E$. We assume that $M$, $E_t$ and $F_t$ are orientable. We claim that there exist a continuous $k$--form $\omega_F$ on $M$, and continuous $k$-multivector fields $V_t$ such that:
\begin{itemize}
\item[1.] $\omega_F$ is $C\sp {\alpha}$ and $\ker\omega_F(p)=F(p)\wedge T_pM\sp{\wedge (k-1)}$,
\item[2.] $V_t(p)\in E(p)^{\wedge k}$, and $V_E=V_0$ is $C\sp {\beta}$,
\item[3.] $\omega_F(V_t)=1$ for every $t\in I$ (eventually for a smaller interval $I$).
\end{itemize}

A sketch of the proof is the following.

Given any smooth chart $U\subset M$ where $E$ and $F$ are parallelizable, one can choose a $C^{\beta}$ positively oriented base of $E$ to be $\{V_1,V_2,\dots V_k\}$ and a $C^{\alpha}$ positively oriented base of $F$ to be $\{V_{k+1},\dots V_d\}$. Let $V^U:=V_1\wedge V_2\wedge\dots\wedge V_k$ be a nonzero $C^{\beta}$ $k$-multivector field in $ E^{\wedge k}$ inside the chart $U$. Using a finite covering of $M$ with such charts, and a smooth partition of unity, one can construct a nonzero $C^{\beta}$ $k$-multivector field $\tilde V_E$ in $ E^{\wedge k}$ on the entire $M$.

In a similar way we can construct continuous nonzero $k$-multivector field $\tilde V_t$ in $ E_t^{\wedge k}$ on the entire $M$. Since $t\mapsto E_t$ is continuous, we can choose $\tilde V_t$ such that $t\mapsto\tilde V_t$ is also continuous.

Let $\mu$ be a nonzero smooth $d$-form on $M$. Let $\omega_F^U:=i_{V_{k+1}}\dots i_{V_d}\mu$ be a nonzero $C^{\alpha}$ $k$-form inside the chart $U$ with the kernel $F\wedge TM\sp{\wedge (k-1)}$ ($i_V\mu$ is the interior product of $\mu$ with $V$). Using again a finite covering of $M$ with such charts, and a smooth partition of unity, one can construct a nonzero $C^{\alpha}$ $k$-form $\tilde \omega_F$ with the kernel $F\wedge TM\sp{\wedge (k-1)}$ on the entire $M$.

The transversality of $E$ and $F$ guarantees that $\tilde\omega_F(\tilde V_E)$ is nonzero. If $\alpha\geq\beta$, we just let $\omega_F=\tilde\omega_F$ and $V_E=\frac 1{\tilde\omega_F(\tilde V_E)}\tilde V_E$. Otherwise we let $V_E=\tilde V_E$ and $\omega_F=\frac 1{\tilde\omega_F(\tilde V_E)}\tilde \omega_F$. Thus we get $\omega_F$ to be $C^{\alpha}$, $V_E$ to be $C^{\beta}$, and $\omega_F(V_E)=1$.

Since $t\mapsto\tilde V_t$ is continuous, eventually after restricting $I$ we can assume that $\omega_F(\tilde V_t)$ is nonzero. Let $V_t=\frac 1{\omega_F(\tilde V_t)}\tilde V_t$, so $\omega_F(V_t)=1$. Also from construction we have $t\mapsto V_t$ is in fact continuous.

\begin{rem}
Let us remark that the choice of $\omega_F$ and $V_E$ is not unique. Given any function $h:M\rightarrow (0,\infty)$ of class $C^{\max\{\alpha,\beta\}}$, we can replace $\omega_F$ and $V_t$ by $h\omega_F$ and $\frac 1hV_t$.
\end{rem}

{\bf Notations:} For simplicity in the rest of the paper we will use the notations $\omega:=\omega_F$ and $V:=V_E$, if no confusion can be made.

\subsection{The ``tangent'' vector fields $X$ and $Y$ for the family $h_t:=f_t\circ f_0^{-1}$.}\label{ssec:XY}

Suppose that we have a $C^r$--family of diffeomorphism $(h_t)_{t\in I}$ on $M$ such that $h=h_0=Id$ is the identity on $M$. We are interested in approximating $h_t$ by flows.

Define the $C^{r-1}$ vector field $X$ on $M$ tangent to the family $h_t$ in $t=0$ by
\begin{equation}
X(p)=\left.\frac{\partial}{\partial t}h_t(p)\right|_{t=0}.
\end{equation}

If $r\geq 2$ then $X$ is $C^1$ and will generate a flow which we denote $\phi^X_t$. The flow $\phi_t^X$ is a good approximation of first order for the family $h_t$. The following lemma is straightforward. 

\begin{lema}\label{1flow}
Under the above conditions, the following relations hold uniformly in any charts:
\begin{enumerate}
\item
If $r\geq 2$ then
\begin{equation}\label{eq:ht1}
h_t(p)=\phi_t^X(p)+O(t^2)\quad(=p+tX(p)+O(t^2));
\end{equation}
and
\begin{equation}\label{eq:Dht1}
Dh_t(p)=D\phi_t^X(p)+o(t)\quad(=Id+tDX(p)+o(t));
\end{equation}
\item
If $2<r\leq 3$ then
\begin{equation}\label{eq:Dht12}
Dh_t(p)=D\phi_t^X(p)+O(t^{r-1})\quad(=Id+tDX(p)+O(t^{r-1})).
\end{equation}
\end{enumerate}
\end{lema}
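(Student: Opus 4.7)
The plan is to apply Taylor's theorem in $t$ separately to the pairs $t\mapsto h_t(p)$ versus $t\mapsto \phi_t^X(p)$, and then to $t\mapsto Dh_t(p)$ versus $t\mapsto D\phi_t^X(p)$, after verifying that at $t=0$ they share both their zeroth and first $t$-derivatives. All computations are carried out in a finite atlas of charts, in which the expressions $h_t(p)-\phi_t^X(p)$ and $Dh_t(p)-D\phi_t^X(p)$ make sense; uniformity in $p$ will come from compactness of $M$ and continuity of the relevant higher $t$-derivatives in $(t,p)$.

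First I would check the $t$-regularity on each side. Since $(t,p)\mapsto h_t(p)$ is jointly $C^r$ with $r\geq 2$, the map $t\mapsto h_t(p)$ is $C^2$ and $t\mapsto Dh_t(p)$ is $C^{r-1}$, in particular $C^1$. On the flow side, $X$ is $C^{r-1}\subset C^1$, so $\partial_t\phi_t^X=X\circ\phi_t^X$ can be differentiated once more in $t$ to produce a continuous second $t$-derivative, making $\phi_t^X(p)$ of class $C^2$ in $t$; the variational equation
\[
\frac{d}{dt}D\phi_t^X(p)=DX(\phi_t^X(p))\cdot D\phi_t^X(p)
\]
shows that $t\mapsto D\phi_t^X(p)$ is $C^1$. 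At $t=0$ the two sides agree at zeroth order ($h_0(p)=\phi_0^X(p)=p$, $Dh_0(p)=D\phi_0^X(p)=\mathrm{Id}$) and at first order in $t$: $\partial_t h_t(p)|_{t=0}=X(p)=\partial_t \phi_t^X(p)|_{t=0}$, and differentiating this identity in $p$ gives $DX(p)$ for both $\partial_t(Dh_t)|_{t=0}$ and $\partial_t(D\phi_t^X)|_{t=0}$. Taylor's theorem with continuous second $t$-derivatives (uniform in $p$ by compactness) then delivers the $O(t^2)$ bound \eqref{eq:ht1} and the $o(t)$ bound \eqref{eq:Dht1}.

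For part (ii), with $r\in(2,3]$, the extra regularity turns the $o(t)$ into $O(t^{r-1})$. The function $t\mapsto Dh_t(p)$ is $C^{r-1}$, i.e.\ $C^1$ whose first $t$-derivative is $(r-2)$-H\"older in $t$; the same holds for $t\mapsto D\phi_t^X(p)$, because the right-hand side of the variational equation is $C^{r-2}$ in $t$ (via $X\in C^{r-1}$ and $\phi_t^X$ being $C^1$ in $t$). Since the two first $t$-derivatives coincide at $t=0$, the difference $\partial_t(Dh_t-D\phi_t^X)(p)$ vanishes at $t=0$ and is $(r-2)$-H\"older in $t$; integrating from $0$ to $t$ yields \eqref{eq:Dht12}. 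The only step that needs care, and which I expect to be the main technical point, is the uniformity in $p$ of the H\"older constants of these $t$-derivatives: this comes from the joint $C^r$ regularity of the family $h_t$ and, on the flow side, from a further differentiation of the variational equation using $X\in C^{r-1}$ together with compactness of $M$.
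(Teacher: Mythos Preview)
Your argument is correct and is precisely the standard Taylor-expansion reasoning the paper has in mind: the authors state only that the lemma ``is straightforward'' and give no proof, so there is nothing further to compare. Your treatment of the uniformity via compactness and of the $O(t^{r-1})$ remainder via the $(r-2)$-H\"older continuity of $\partial_t Dh_t$ and of the right-hand side of the variational equation is exactly what is needed.
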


In order to obtain a better approximation of $h_t$ (up to order two), we need to introduce the vector field $Y$, which can be seen as a ``second order correction of the flow''. An intrinsic way of defining $Y$ is the following.

For $r\geq 1$, define the $C^{r-1}$ vector fields $X_t$ ``tangent'' to each $h_t$: 
\begin{equation}\label{eq:Xt}
X_{t}(p)=\left.\frac{\partial}{\partial s}h_t^{-1}(h_{t+s}(p))\right|_{s=0}=Dh_t^{-1}(h_t(p))\cdot\frac{\partial}{\partial t}h_t(p)=\left[Dh_t(p)\right]^{-1}\cdot\frac{\partial}{\partial t}h_t(p).
\end{equation}
Clearly we have that $X=X_0$. If $r\geq 2$, then we can differentiate $X_t$ with respect to $t$ and we obtain the vector fields $Y_t$:
\begin{equation}\label{eq:Yt}
Y_{t}(p)=\lim_{s\to0}\frac{X_{s+t}(p)-X_{t}(p)}{s}=\frac{\partial}{\partial t}X_t(p).
\end{equation}

Let $\quad Y:=Y_0$. We can give a formula for $Y$ in local charts. Suppose that in some chart we have
$$
h(p)=p,\quad \left.\frac \partial{\partial t}h_t(p)\right|_{t=0}=X(p),\quad \mbox{and}\, \, \,  \left.\frac {\partial^2}{\partial t^2}h_t(p)\right|_{t=0}=Z(p)
$$
where $X,Z:\mathbb R^n\to\mathbb R^n$ are $C^{r-1}$ respectively $C^{r-2}$. This means that we can write
\begin{equation}\label{eq:locchart}
h_t(p)=p+tX(p)+\frac{t\sp2}{2}Z(p)+R(t,p)
\end{equation}
where $R(t,p)=o(t\sp2)$ uniformly on $p$.

The vector field $X$ is independent of the choice of the chart, however $Z$ is not (this is why we use $Y$ and not $Z$). We claim that
\begin{equation}\label{eq:Y_eq}
Y=Z-DX\cdot X.
\end{equation}
In order to see this, we compute $Y$:
\begin{align*}
Y(p)&=\left.\frac{\partial}{\partial t}X_t(p)\right|_{t=0}=\frac{\partial}{\partial t}\left.\left[\left[Dh_t(p)\right]^{-1}\cdot\frac{\partial}{\partial t}h_t(p)\right]\right|_{t=0}\\
&=\left[Dh_0(p)\right]^{-1}Z(p)+\left.\frac{\partial}{\partial t}\left[Dh_t(p)^{-1}\right]\right|_{t=0}\cdot X(p)\\
&=Z(p)-\left.\left[ Dh_t(p)^{-1}\cdot\frac{\partial}{\partial t}Dh_t(p)\cdot Dh_t(p)^{-1}\right]\right|_{t=0}\cdot X(p)\\
&=Z(p)-D\left(\left.\frac{\partial}{\partial t}h_t(p)\right|_{t=0}\right)\cdot X(p)\\
&=Z(p)-DX(p)\cdot X(p),
\end{align*}
since $Dh(p)=Id$, the derivative of the inverse of a matrix function satisfies $(A(t)^{-1})'=A(t)^{-1}\cdot A'(t)\cdot A(t)^{-1}$, and the partial derivatives commute, $\frac{\partial}{\partial t}Dh_t(p)|_{t=0}=D\left(\frac{\partial}{\partial t}h_t|_{t=0}\right)$ (remember that $r\geq2$).

\begin{rem}
We remark that the vector fields $X$ and $Y$ allow to approximate the parametric family $h_t$ with a composition of flows. In fact, if $r\geq 3$, the flows $\phi^X$ and $\phi^Y$ generated by $X$ and $Y$ are well defined. Then we have
$$
h_t(p)=\phi_t^X(\phi_{\frac{t\sp2}2}^Y(p))+o(t^2)\quad\hbox{if } r\geq 3,
$$
$$
Dh_t(p)=D\left[\phi_t^X(\phi_{\frac{t\sp2}2}^Y(p))\right]+o(t^2)\quad\hbox{if } r\geq 4,
$$
in any chart and uniformly in $p$.

The proof is straightforward, one just has to check that the first two derivatives (with respect to $t$) of both sides of the equations coincide in $t=0$. One can also approximate the family $h_t$ with $\phi^Y_{\frac{t^2}2}\circ\phi^X_t$, for $r$ sufficiently large.
\end{rem}

An important observation is the following.

\begin{lema}\label{lem:XYpresmu}
Suppose that $h_t$ preserves the Borel probability $\mu$ for all $t\in I$. If $r\geq 2$ then $\phi^{X_t}$ preserves $\mu$, for any $t\in I$. If $r\geq 3$, then $\phi^{Y_t}$ also preserves $\mu$, for any $t\in I$. In particular, if $\mu$ is the volume on $M$ and $r\geq 3$, then the vector fields $X$ and $Y$ are divergence--free.
\end{lema}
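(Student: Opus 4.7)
The plan is to use the infinitesimal characterization of $\mu$-invariance: a $C^1$ vector field $Z$ generates a $\mu$-preserving flow if and only if $\int_M Z(\varphi)\,d\mu = 0$ for every smooth $\varphi$, where $Z(\varphi)=d\varphi(Z)$ is the Lie derivative of $\varphi$ along $Z$. The forward direction comes by differentiating the equality $\int\varphi\circ\phi^Z_s\,d\mu = \int\varphi\,d\mu$ at $s=0$; for the converse, setting $F(s)=\int\varphi\circ\phi^Z_s\,d\mu$ and $\psi_s=\varphi\circ\phi^Z_s$, the invariance $(\phi^Z_s)_*Z=Z$ of a vector field under its own flow gives the chain-rule identity $Z(\psi_s)=(Z\varphi)\circ\phi^Z_s$, and hence $F'(s)=\int Z(\psi_s)\,d\mu=0$ by hypothesis. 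This is precisely the point at which the Lie derivative enters the analysis.

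For $X_t$, fix $t\in I$ and consider the $C^r$ one-parameter family $g_s:=h_t^{-1}\circ h_{t+s}$. It preserves $\mu$ because both $h_t$ and $h_{t+s}$ do, it satisfies $g_0=\mathrm{Id}$, and by \eqref{eq:Xt} its tangent at $s=0$ is exactly $X_t$. Differentiating the constant identity $s\mapsto\int\varphi\circ g_s\,d\mu=\int\varphi\,d\mu$ at $s=0$ (the interchange of derivative and integral being justified by the compactness of $M$ and smoothness of $\varphi$) yields $\int_M X_t(\varphi)\,d\mu=0$ for every smooth $\varphi$, and the criterion above produces the invariance of $\mu$ under $\phi^{X_t}$. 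For $Y_t$, the hypothesis $r\geq 3$ makes $X_t$, and hence $X_t(\varphi)$, of class $C^1$ in $t$; differentiating the identity $\int_M X_t(\varphi)\,d\mu=0$ in $t$ and passing the derivative through the integral, together with the pointwise identity $\frac{\partial}{\partial t}X_t(\varphi)(p)=d\varphi_p\bigl(\tfrac{\partial}{\partial t}X_t(p)\bigr)=d\varphi_p(Y_t(p))=Y_t(\varphi)(p)$ coming from \eqref{eq:Yt}, gives $\int_M Y_t(\varphi)\,d\mu=0$ and hence the invariance of $\mu$ under $\phi^{Y_t}$.

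Finally, when $\mu$ is the smooth volume on $M$, integration by parts gives $\int Z(\varphi)\,d\mu=-\int\varphi\,\mathrm{div}(Z)\,d\mu$ for every smooth $\varphi$, so the condition $\int Z(\varphi)\,d\mu=0$ for all such $\varphi$ is equivalent to $\mathrm{div}(Z)=0$. Applied to $Z=X=X_0$ and $Z=Y=Y_0$, this yields the divergence-free conclusion. The only subtle step in the whole argument is the infinitesimal criterion for flow-invariance, specifically the converse direction which relies on the identity $(\phi^Z_s)_*Z=Z$; everything else reduces to standard chain-rule manipulations combined with differentiation under the integral sign.
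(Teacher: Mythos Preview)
Your proof is correct and follows essentially the same strategy as the paper's: both hinge on the infinitesimal criterion $\int_M Z(\varphi)\,d\mu=0$ for flow-invariance, apply it to $X_t$ by differentiating a $\mu$-constant integral, and then pass to $Y_t$ by a further $t$-derivative (the paper phrases this last step as a limit of difference quotients $(X_t-X_s)/(t-s)$, which is the same thing). Your organization is slightly cleaner in the $X_t$ step, since working with the recentered family $g_s=h_t^{-1}\circ h_{t+s}$ (which has $g_0=\mathrm{Id}$) avoids the paper's need to compose with $h_t$ and then invoke surjectivity of $g\mapsto g\circ h_t$; but this is a cosmetic difference rather than a different method.
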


\begin{proof}
Recall that $f$ preserves a measure $\mu$ if and only if $\int_M gd\mu=\int_Mg\circ fd\mu$ for any $C^0$ function $g:M\rightarrow\mathbb R$. Since the $C^1$ functions are dense in the space of $C^0$ functions, this is equivalent to $\int_M gd\mu=\int_Mg\circ fd\mu$ for any $C^1$ function $g:M\rightarrow\mathbb R$.

If a vector field $\upchi$ is differentiable and generates the flow $\phi^\upchi$, then $\phi^\upchi$ preserves $\mu$ if and only if $\int_M gd\mu=\int_Mg\circ \phi^\upchi_sd\mu$ for any $C^1$ function $g:M\rightarrow\mathbb R$ an any $s\in\mathbb R$. This in turn is equivalent to 
\begin{equation}\label{eq:Zpresmu}
\left.\frac{\partial}{\partial s}\int_Mg(\phi^\upchi_s(p))d\mu\right|_{s=0}=\int_M\left(\left.\frac{\partial}{\partial s}g(\phi^\upchi_s(p))\right|_{s=0}\right)d\mu=\int_M\LL_\upchi g\,d\mu=0,
\end{equation}
for any $C^1$ function $g:M\rightarrow\mathbb R$ ($\LL_\upchi$ is the Lie derivative).

Take some $g:M\rightarrow\mathbb R$ of class $C^1$. If every $h_t$ preserves $\mu$, then $\int_Mg\circ h_td\mu=\int_Mgd\mu$ is constant. Recall that by \eqref{eq:Xt} we have that $\frac{\partial}{\partial t}h_t(p)=Dh_t(p)X_t(p)$. We have
\[
\begin{aligned}
0&=\frac{\partial}{\partial t}\int_Mg(h_t(p))d\mu=\int_MDg(h_t(p))\frac{\partial}{\partial t}h_t(p)d\mu\\
&=\int_MDg(h_t(p))Dh_t(p)X_t(p)d\mu=\int_MD(g\circ h_t)(p)\cdot X_t(p)\\
&=\int_M\LL_{X_t}(g\circ h_t)d\mu.
\end{aligned}
\]
Since $h_t$ is a diffeomorphism, this means that for each $t$ and for any $C^1$ function $\tilde g=g\circ h_t$ we have
$$
\int_M\LL_{X_t}\tilde gd\mu=0,
$$
so by \eqref{eq:Zpresmu} we have that $\phi^{X_t}$ preserves $\mu$ for every $t$.

The Lie derivative is linear with respect to the vector fields, so by \eqref{eq:Zpresmu}, the flows generated by $(X_t-X_s)/(t-s)$ preserve $\mu$ for all $t,s\in I$. The Lie derivative is also continuous with respect to the vector field, so from the definition of $Y_t$ (recall (\ref{eq:Yt})) we get that the flows generated by $Y_t$ also preserve $\mu$.
\end{proof}


\subsection{Expansions for $t\mapsto h_t^*\omega$ and $t\mapsto h_{t*}V$ at $t=0$.}\label{ssec:taylor}

We refere again to figure Figure~\ref{dib:perturb} for an intuitive represententation of $\omega_F$ and $V_t$, and the action induced on them by the derivative of $f$ and $h_t$. 

Let $\Omega^k(M)$ be the Banach space of continuous $k$-forms on $M$. If $\omega\in\Omega^k(M)$, its norm is defined by
$$\|\omega\|=\sup\{\omega_p(v_1,\dots,v_k)\::\: p\in M,v_i\in T_pM,\|v_i\|=1, i\in\{1,\dots, k\}\}.$$
Also let $\mathcal X^{k}(M)$ be the Banach space of continuous $k$-multivector fields on $M$. If $V\in\mathcal X^{k}(M)$ then its norm is
$$\|V\|=\sup_{p\in M}\|V(p)\|,$$
where $\|V(p)\|$ is the usual norm on the exterior product of $T_pM$. Let us remark that the pairing $(\omega,V)\mapsto\omega(V)$ is bilinear and continuous with values in $C^0(M)$.

We are interested in the Frechet differentiability of the maps $h_t^*\omega:I\subset\mathbb R\rightarrow\Omega^k(M)$ and $h_{t*}V:I\rightarrow\mathcal X^{k}(M)$. Using smooth partitions of unity, one can see that it is sufficient to check the regularity of the maps in local charts.

We have the following lemma which is fundamental to our future considerations. Recall that $r$ is the regularity of the family $h_t$, and let $X$ and $Y$ the vector fields tangent to the family $h_t$ defined in the previous subsection.

\begin{lema}\label{le:h} Let $\omega\in \Omega^k(M)$ be a continuous $k$--form.
\begin{enumerate}
\item If $\omega$ is $C^{\alpha}$ and $r\geq\alpha+1$ then $t\mapsto h_t^*\omega$ is $C^{\alpha}$.
\item If $\omega$ is $C^1$ and $r\geq 2$, then $t\mapsto h_t^*\omega$ is Frechet differentiable and the derivative in zero is $\LL_X\omega$:
\begin{equation}\label{eq:derivadahw}
h_t^*\omega=\omega+t\LL_X\omega+o(t).
\end{equation}
\item If $\omega$ is $C^2$ and $r\geq 3$, then $t\mapsto h_t^*\omega$ is twice Frechet differentiable and the second derivative in zero is $\LL_X\LL_X\omega+\LL_Y\omega$:
\begin{equation}\label{eq:derivadahw2}
h_t^*\omega=\omega+t\LL_X\omega+\frac{t^2}2(\LL_X\LL_X\omega+\LL_Y\omega)+o(t^2).
\end{equation}
\end{enumerate}
\end{lema}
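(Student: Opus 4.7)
The plan is to reduce to a fixed smooth coordinate chart via a partition of unity, since the pullback, the Lie derivatives, and the $C^0$-norm on $\Omega^k(M)$ all localize. In such a chart we have the expansion \eqref{eq:locchart}, $h_t(p) = p + tX(p) + \tfrac{t^2}{2}Z(p) + R(t,p)$ with $R(t,p) = o(t^2)$ uniformly on compact sets, and the joint $C^r$ regularity of $(t,p)\mapsto h_t(p)$ yields the corresponding uniform spatial-derivative expansion $Dh_t(p) = \mathrm{Id} + tDX(p) + \tfrac{t^2}{2}DZ(p) + o(t^2)$ when $r\geq 3$. The pullback reads
\begin{equation*}
(h_t^*\omega)_p(v_1,\dots,v_k) = \omega_{h_t(p)}\bigl(Dh_t(p)v_1,\dots,Dh_t(p)v_k\bigr),
\end{equation*}
and the $\Omega^k(M)$-norm is the supremum of its modulus over $p$ and unit $v_i$, so everything reduces to uniform estimates of this scalar quantity. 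For part (1) I combine $|\omega_{h_t(p)} - \omega_p| \leq \|\omega\|_{C^\alpha}|h_t(p)-p|^\alpha = O(t^\alpha)$ for $\alpha\in[0,1]$ with the $C^{r-1}$ smoothness in $t$ of $Dh_t$; for integer $\alpha\geq 1$ one Taylor-expands $\omega$ at $p$ to the appropriate order and inserts the $C^r$ expansion of $h_t$, and non-integer cases follow by interpolation between consecutive integer regularities.

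For part (2), assuming $\omega\in C^1$, Lemma \ref{1flow} gives $h_t(p)=p+tX(p)+o(t)$ and $Dh_t(p)=\mathrm{Id}+tDX(p)+o(t)$; substituting $\omega_{h_t(p)} = \omega_p + tD_p\omega(X(p)) + o(t)$ into the chart formula for the pullback, then multilinearly expanding in the $Dh_t(p)v_i$ arguments and collecting the linear-in-$t$ contribution yields
\begin{equation*}
(h_t^*\omega)_p(v_1,\dots,v_k) - \omega_p(v_1,\dots,v_k) = t\!\left[D_p\omega(X(p))(v_1,\dots,v_k) + \sum_{i=1}^{k}\omega_p\bigl(v_1,\dots,DX(p)v_i,\dots,v_k\bigr)\right] + o(t)
\end{equation*}
uniformly in $p$ and unit $v_i$. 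The bracket is precisely the standard coordinate representation of $(\LL_X\omega)_p(v_1,\dots,v_k)$, which yields \eqref{eq:derivadahw}.

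For part (3), with $\omega\in C^2$ and $r\geq 3$, the cleanest route is to bootstrap from part (2). Applied to the reparametrized family $k_s := h_t^{-1}\circ h_{t+s}$, whose tangent vector field at $s=0$ equals $X_t$ from \eqref{eq:Xt} (since $\partial_s k_s|_{s=0}=[Dh_t]^{-1}\partial_t h_t = X_t$), together with the factorization $h_{t+s}^* = (h_t\circ k_s)^* = k_s^*\circ h_t^*$, part (2) yields the first-order identity $\frac{d}{dt} h_t^*\omega = \LL_{X_t}(h_t^*\omega)$. Since $\omega\in C^2$ and $X_t$ is $C^1$ in $p$ with $C^{r-1}$ dependence in $t$, the right-hand side is itself $C^1$ in $t$; differentiating once more at $t=0$ and using $\partial_t X_t|_{t=0} = Y$ from \eqref{eq:Yt} gives $\frac{d^2}{dt^2}\big|_{t=0} h_t^*\omega = \LL_X\LL_X\omega + \LL_Y\omega$, which is \eqref{eq:derivadahw2}. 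The main obstacle is the careful Banach-valued justification of this bootstrap, namely verifying that the indicated second derivative of $t\mapsto h_t^*\omega$ actually exists and equals the formal pointwise computation in the $C^0$-norm of $\Omega^k(M)$; this requires a uniform-in-$p$ estimate of the same flavor as in part (2) applied to $\LL_{X_t}(h_t^*\omega)$, available thanks to the joint $C^r$ regularity of $h_t$ with $r\geq 3$. As a direct alternative/check, one verifies \eqref{eq:derivadahw2} from the second-order chart expansions of $h_t$, $Dh_t$ and $\omega_{h_t(p)}$ written above, then substitutes $Z=Y+DX\cdot X$ from \eqref{eq:Y_eq} to recognize the chart-dependent $t^2$-coefficient as the intrinsic combination $\LL_X\LL_X\omega + \LL_Y\omega$.
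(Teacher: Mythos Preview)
Your proof is correct, though it proceeds by a route different from the paper's. For parts (ii) and (iii) the paper argues algebraically: since both $\partial_t h_t^*(\cdot)|_{t=0}$ and $\LL_X(\cdot)$ (and likewise the second-order operators) are linear and satisfy the Leibniz rule for $\wedge$, it suffices to verify the formulas on a $C^1$ (resp.\ $C^2$) $0$-form $g$ and on a $C^\infty$ exact $1$-form $dg$; these two checks are short explicit computations. Your part (ii), by contrast, expands $(h_t^*\omega)_p$ directly in a chart and recognizes the first-order term as the coordinate expression of $\LL_X\omega$, which is valid and more hands-on, at the cost of treating a general $k$-form rather than reducing to scalars and $dg$.

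Your bootstrap for part (iii) is correct and is essentially the content of the Remark following the lemma in the paper, which records $\partial_t h_t^*\omega = \LL_{X_t}(h_t^*\omega)$ for all $t$; the paper in fact uses exactly this device later in the proof of Lemma~\ref{le:3403}. The one point requiring care, as you acknowledge, is that to differentiate $t\mapsto \LL_{X_t}(h_t^*\omega)$ in the $C^0$-norm one needs the expansion $h_t^*\omega = \omega + t\LL_X\omega + o(t)$ to hold in the $C^1$-norm on forms (since $\LL_X$ is a first-order operator), which is a slightly stronger statement than part (ii) as written. This does follow from the joint $C^r$-regularity with $r\geq 3$ and $\omega\in C^2$, so the argument goes through; the paper's reduction to $0$-forms and $dg$ sidesteps this subtlety entirely, which is the main practical advantage of its approach.
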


\begin{proof}
The part (i) and the differentiability claims follow directly from the formulas of the pullback of a form in local coordinates.

For the parts (ii) and (iii) we just have to check that if $\omega$ is $C^1$ then $\left.\frac{\partial}{\partial t}h_t^*\omega\right|_{t=0}=\LL_X\omega$, and if $\omega$ is $C^2$ then $\left.\frac{\partial^2}{\partial t^2}h_t^*\omega\right|_{t=0}=\LL_X\LL_X\omega+\LL_Y\omega$. Let us make first the following remarks.

Observe first that if (ii), (iii) are true for the forms $\omega_1$ and $\omega_2$ then they are also true for the form $\omega_1+\omega_2$, because the Lie derivative is linear:
\begin{align*}
\left.\frac{\partial}{\partial t}h_t^*(\omega_1+\omega_2)\right|_{t=0}&=\left.\frac{\partial}{\partial t}(h_t^*\omega_1)\right|_{t=0}+\left.\frac{\partial}{\partial t}(h_t^*\omega_2)\right|_{t=0}\\ \\
&=\LL_X\omega_1+\LL_X\omega_2\\
&=\LL_X(\omega_1+\omega_2),
\end{align*}
\begin{eqnarray*}
\left.\frac{\partial^2}{\partial t^2}h_t^*(\omega_1+\omega_2)\right|_{t=0}&=&\left.\frac{\partial^2}{\partial t^2}(h_t^*\omega_1)\right|_{t=0}+\left.\frac{\partial^2}{\partial t^2}(h_t^*\omega_2)\right|_{t=0}\\ \\
&=&\Big(\LL_X\LL_X\omega_1+\LL_Y\omega_1\Big)+\Big(\LL_X\LL_X\omega_2+\LL_Y\omega_2\Big)\\
&=&\LL_X\LL_X(\omega_1+\omega_2)+\LL_Y(\omega_1+\omega_2).
\end{eqnarray*}

Observe also that if (ii), (iii) are true for the forms $\omega_1$ and $\omega_2$ then they are also true for the form $\omega_1\wedge\omega_2$, because the derivatives obey the Leibniz rule.
\begin{align*}
\left.\frac{\partial}{\partial t}h_t^*(\omega_1\wedge\omega_2)\right|_{t=0}&=\left.\frac{\partial}{\partial t}h_t^*\omega_1\wedge h_t^*\omega_2\right|_{t=0}+\left.h_t^*\omega_1\wedge\frac{\partial}{\partial t}h_t^*\omega_2\right|_{t=0}\\
&=\LL_X\omega_1\wedge\omega_2+\omega_1\wedge\LL_X\omega_2=\LL_X(\omega_1\wedge\omega_2),
\end{align*}
\begin{align*}
\left.\frac{\partial^2}{\partial t^2}h_t^*(\omega_1\wedge\omega_2)\right|_{t=0}&=\left.\left[\frac{\partial^2}{\partial t^2}h_t^*\omega_1\wedge h_t^*\omega_2+2\frac{\partial}{\partial t}h_t^*\omega_1\wedge\frac{\partial}{\partial t}h_t^*\omega_2+h_t^*\omega_1\wedge\frac{\partial^2}{\partial t^2}h_t^*\omega_2\right]\right|_{t=0}\\
&=(\LL_X\LL_X\omega_1+\LL_Y\omega_1)\wedge\omega_2+2\LL_X\omega_1\wedge\LL_X\omega_2+\omega_1\wedge (\LL_X\LL_X\omega_2+\LL_Y\omega_2)\\
&=\LL_X\LL_X(\omega_1\wedge\omega_2)+\LL_Y(\omega_1\wedge\omega_2).
\end{align*}

The formulas in (ii), (iii) are local, and it is sufficient to verify them in a chart $U\subset\mathbb R^n$, where any form can be decomposed into a sum of forms $gdx_{i_1}\wedge\dots\wedge dx_{i_k}$. The two remarks above show that we only need to verify (ii) and respectively (iii) for a zero form $g$ of class $C^1$ respectively $C^2$, and for the one-forms $dx_i$, or more generally for a one-form $dg$ with $g$ of class $C^{\infty}$.

Let us prove first (ii) for a map $g:U\rightarrow\mathbb R$ of class $C^1$, and $h_t$ of class $C^2$, meaning that $X$ is $C^1$. Then $h_t^*g(p)=g(h_t(p))$ and
$$
\left.\frac\partial{\partial t}h_t^*g(p)\right|_{t=0}=\left.\frac\partial{\partial t}g(h_t(p))\right|_{t=0}=Dg(p)\cdot X(p)=\LL_Xg(p),
$$
so indeed $\left.\frac\partial{\partial t}h_t^*g\right|_{t=0}=\LL_Xg$.

Now let us prove (iii) for a map $g:U\rightarrow\mathbb R$ is of class $C^2$, and $h_t$ of class $C^3$, meaning that $X$ is $C^2$ and $Y$ is $C^1$. We have that
\begin{align*}
\frac{\partial^2}{\partial t^2}h_t^*g(p)&=\frac{\partial^2}{\partial t^2}g(h_t(p))=\frac\partial{\partial t}\left[Dg(h_t(p))\cdot\frac\partial{\partial t}h_t(p)\right]\\
&=D^2g(h_t(p))\left( \frac\partial{\partial t}h_t(p),\frac\partial{\partial t}h_t(p)\right)+Dg(h_t(p))\cdot\frac{\partial^2}{\partial t^2}h_t(p),
\end{align*}
and in $t=0$, we get
$$
\left.\frac{\partial^2}{\partial t^2}h_t^*g(p)\right|_{t=0}=D^2g(p)\left( X(p), X(p)\right)+Dg(p)\cdot Z(p).
$$
On the other hand
\begin{align*}
\LL_X\LL_Xg(p)&=\LL_X\left (Dg(p)\cdot X(p)\right)=D\left( Dg(p)\cdot X(p)\right)\cdot X(p)\\
&=D^2g(p)\left( X(p), X(p)\right)+Dg(p)\cdot DX(p)\cdot X(p)
\end{align*}
and, using \eqref{eq:Y_eq},
$$
\LL_Yg(p)=Dg(p)\cdot Y=Dg(p)\cdot Z(p)-Dg(p)\cdot DX(p)\cdot X(p).
$$
Combining the last 3 equalities we get that
$$
\left.\frac{\partial^2}{\partial t^2}h_t^*g(p)\right|_{t=0}=\LL_X\LL_Xg(p)+\LL_Yg(p).
$$

Now consider a zero form $g$ of class $C^{\infty}$. Recall that the exterior derivative commutes with the pullback ($h_t^*(dg)=d(h_t^*g)$), and with the Lie derivative ($\LL_X(dg)=d\left(\LL_Xg\right)$).

Let us prove (ii) for $dg$ given $h_t$ of class $C^2$. The map $(t,p)\mapsto g(h_t((p))$ is $C^2$ in both $t$ and $p$, so the partial derivatives commute: $d\frac{\partial}{\partial t}g(h_t(p))=\frac{\partial}{\partial t}dg(h_t(p))$. Then

$$
\left.\frac{\partial}{\partial t}h_t^*(dg)\right|_{t=0}=\left.\frac{\partial}{\partial t}d(g\circ h_t)\right|_{t=0}=\left.d\frac{\partial}{\partial t}(g\circ h_t)\right|_{t=0}=d\LL_Xg=\LL_Xdg.
$$

Now let us prove (iii) for $dg$, given $h_t$ of class $C^3$. The map $(t,p)\mapsto g(h_t((p))$ is $C^3$ in $t$ and $p$, so the following partial derivatives commute: $d\frac{\partial^2}{\partial t^2}g(h_t(p))=\frac{\partial^2}{\partial t^2}dg(h_t(p))$. Then
$$
\frac{\partial^2}{\partial t^2}h_t^*(dg)=\frac{\partial^2}{\partial t^2}d(g\circ h_t)=d\frac{\partial^2}{\partial t^2}(g\circ h_t),
$$
and
$$
\left.\frac{\partial^2}{\partial t^2}h_t^*(dg)\right|_{t=0}=d(\LL_X\LL_Xg+\LL_Yg)=\LL_X\LL_Xdg+\LL_Ydg.
$$

This finishes the proof of the lemma.
\end{proof}

\begin{rem}\label{rem:derivativet}
The formulas \eqref{eq:derivadahw} and \eqref{eq:derivadahw2} give us the derivatives of $t\mapsto h_t^*\omega$ in $t=0$. One can use these formulas in order to obtain formulas at any $t_0$, using the observation that we now use the map $t\mapsto h_{t_0+t}^*\omega$. Then $\omega$ is replaced by $h_{t_0}^*\omega$, $X$ is replaced by $X_{t_0}$, and $Y$ is replaced by $Y_{t_0}$, and the formulas of the derivatives are:
$$
\left.\frac{\partial}{\partial t}h_t^*\omega\right|_{t=t_0}=\LL_{X_{t_0}}h_{t_0}^*\omega,
$$
$$
\left.\frac{\partial^2}{\partial t^2}h_t^*\omega\right|_{t=t_0}=\LL_{X_{t_0}}\LL_{X_{t_0}}h_{t_0}^*\omega+\LL_{Y_{t_0}}h_{t_0}^*\omega.
$$
\end{rem}

One can obtain similar formulas for multivector fields instead of differential forms.

\begin{lema}\label{lem:taylorhV}
Let $V$ be a continuous $k$-multivector field $V$ on $M$.
\begin{enumerate}
\item If $V$ is $C^{\beta}$ and $r\geq\beta+1$ then $t\mapsto h_{t*}V$ is $C^{\beta}$.
\item If $V$ is $C^1$ and $r\geq 2$, then $t\mapsto h_{t*}V$ is Frechet differentiable and the derivative in zero is $-\LL_XV$:
\begin{equation}\label{eq:taylorV1}
h_{t*}V=V-t\LL_XV+o(t).
\end{equation}
\item If $V$ is $C^2$ and $r\geq 3$, then $t\mapsto h_{t*}V$ is twice Frechet differentiable and the second derivative in zero is $\LL_X\LL_XV-\LL_YV$:
\begin{equation}\label{eq:taylorV2}
h_{t*}V=V-t\LL_X(V)+\frac{t^2}2\left( \LL_X\LL_X(V)-\LL_Y(V)\right)+o(t^2).
\end{equation}
\end{enumerate}
\end{lema}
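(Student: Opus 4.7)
The proof strategy closely parallels that of Lemma~\ref{le:h}, with pullbacks of forms replaced by pushforwards of multivector fields. The only substantive change is the appearance of minus signs, which reflects the identity $\frac{d}{dt}\big|_{t=0}(\phi_t^X)_* V = -\LL_X V$ (pushforward corresponds to pullback by the inverse flow).

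The first step is to reduce to a coordinate chart $U$ via a smooth partition of unity, using that both $h_{t*}$ and $\LL_X,\LL_Y$ are local operators. Then, since both sides of all three claimed identities satisfy the Leibniz rule with respect to the wedge product — explicitly $h_{t*}(V_1\wedge V_2)=h_{t*}V_1\wedge h_{t*}V_2$ and $\LL_X(V_1\wedge V_2)=\LL_XV_1\wedge V_2+V_1\wedge\LL_XV_2$, and similarly for $\LL_Y$ — it suffices to check everything for a single vector field $v$. In a chart one has the explicit formula $(h_{t*}v)(p)=Dh_t(h_t^{-1}(p))\,v(h_t^{-1}(p))$. The regularity claim (i) is then immediate: $(t,p)\mapsto h_t^{-1}(p)$ and $(t,p)\mapsto Dh_t(p)$ are $C^{r-1}$ in both variables jointly, and $v\in C^\beta$ combined with $r\geq\beta+1$ ensures that the composition is $C^\beta$ in $t$ in the Banach space $\mathcal X^k(M)$.

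For (ii), expand the two factors using Lemma~\ref{1flow} together with the inversion $h_t^{-1}(p)=p-tX(p)+O(t^2)$, and compute
\begin{equation*}
\tfrac{d}{dt}\big|_{t=0}(h_{t*}v)(p)=DX(p)\,v(p)-Dv(p)\,X(p)=-[X,v](p)=-\LL_X v(p),
\end{equation*}
giving \eqref{eq:taylorV1}. For (iii), push the expansions of $h_t$, $h_t^{-1}$, and $Dh_t$ to second order under the hypothesis $r\geq3$, and use the identity $Y=Z-DX\cdot X$ from \eqref{eq:Y_eq} to rewrite the second-order remainder $Z$ in the Taylor expansion of $h_t$ in terms of $Y$ and $X$. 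The bookkeeping of the three sources of $t^2$-terms — from the expansion of $Dh_t$, from the perturbed argument $h_t^{-1}(p)$ at second order, and from the cross term produced by the product rule on $Dh_t(h_t^{-1}(p))\cdot v(h_t^{-1}(p))$ — is the only delicate part of the argument and where I expect essentially all of the computational effort to lie. After collecting terms and applying $Y=Z-DX\cdot X$, the $Z$-contributions recombine into exactly $-\LL_Y v$, while the remaining quadratic expressions involving $X$, $DX$, $Dv$, and $D^2 v$ assemble into $\LL_X\LL_X v$, yielding \eqref{eq:taylorV2}. This second-order matching is the dual of the corresponding computation carried out for forms in Lemma~\ref{le:h}(iii), and once the reduction to a single vector field is in place it amounts to the same algebraic identity read through the duality between $\LL_X$ acting on vectors versus forms.
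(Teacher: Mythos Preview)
Your proof is correct, and it takes the route the paper explicitly mentions as an alternative but does not carry out. The paper's own argument is different: rather than computing $h_{t*}v$ directly in a chart, it exploits the duality relation $\omega(h_{t*}V)=h_t^*\omega(V)\circ h_t^{-1}$ and reduces the statement to Lemma~\ref{le:h} already proved for forms. Concretely, for any $C^1$ form $\omega$ the paper computes
\[
\omega\Bigl(\tfrac{\partial}{\partial t}h_{t*}V\Bigr)\Big|_{t=0}
=\tfrac{\partial}{\partial t}\bigl[h_t^*\omega(V)\circ h_t^{-1}\bigr]\Big|_{t=0}
=\LL_X\omega(V)-\LL_X[\omega(V)]
=-\omega(\LL_XV),
\]
and since this holds for all $\omega$, the derivative is $-\LL_XV$. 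The second-order case is then declared ``similar'' and omitted.

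Your direct approach has the advantage of being self-contained and of making the sign change $\LL_X\mapsto -\LL_X$ transparent via the explicit bracket $DX\cdot v-Dv\cdot X=-[X,v]$. The paper's duality argument is shorter once Lemma~\ref{le:h} is in hand and avoids repeating the second-order bookkeeping you correctly flag as the delicate step; it also makes the symmetry between the form and multivector cases manifest. Both are valid, and indeed the paper says as much: ``One can prove the claims directly for vector fields, and using the Leibniz rule extend the result for multivector fields, similar to the proof of Lemma~\ref{le:h}.''
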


\begin{proof}

Again, like in the case of forms, the part (i) and the differentiability claims are immediate. For the parts (ii) and (iii) we have to check again that $\left.\frac{\partial}{\partial t}h_{t*}V\right|_{t=0}=-\LL_XV$ if $V$ is $C^1$, and $\left.\frac{\partial^2}{\partial t^2}h_{t*}V\right|_{t=0}=\LL_X\LL_XV-\LL_YV$ if $V$ is $C^2$.

One can prove the claims directly for vector fields, and using the Leibniz rule extend the result for multivector fields, similar to the proof of Lemma~\ref{le:h}. We will give a proof using Lemma~\ref{le:h} and the duality between forms and multivector fields. Let us remark that $\omega(h_{t*}V)=h_t^*\omega(V)\circ h_t^{-1}$ (where $\omega(h_{t*}V)$ and $h_t^*\omega(V)$ are seen as maps from $M$ to $\mathbb R$).

Assume first that $V$ is $C^1$. It is easy to see that $\left.\frac{\partial}{\partial t}h_t^{-1}\right|_{t=0}=-X$. For any $C^1$ form $\omega$ we have:
\begin{align*}
\left.\omega\left(\frac{\partial}{\partial t}h_{t*}V\right)\right|_{t=0}&=\left.\frac{\partial}{\partial t}\omega(h_{t*}V)\right|_{t=0}=\left.\frac{\partial}{\partial t}\left[h_t^*\omega(V)\circ h_t^{-1}\right]\right|_{t=0}\\
&=\left.\frac{\partial}{\partial t}\left[h_t^*\omega(V)\right]\circ h_t^{-1}\right|_{t=0}+\left.d\left[h_t^*\omega(V)\right]\left(\frac{\partial}{\partial t}h_t^{-1}\right)\right|_{t=0}\\
&=\LL_X\omega(V)+d(\omega(V))(-X)=\LL_X[\omega(V)]-\omega(\LL_XV)-\LL_X[\omega(V)]\\
&=-\omega(\LL_XV).
\end{align*}

This clearly implies that $\left.\left(\frac{\partial}{\partial t}h_{t*}V\right)\right|_{t=0}=-\LL_XV$.

The proof of the formula \eqref{eq:taylorV2} is similar, and since we do not need it in our future considerations, we omit the proof.

\end{proof}

We also have a result estimating the approximation of $h_{t*}V$ by $\phi_{t*}^XV$, and of $h_t^*\omega$ by $\phi_t^{X*}\omega$.

\begin{lema}\label{le:htphit}
Let $\omega$ be a $k$-form on $M$, and $V$ a $k$-multivector field on $M$, and $r\geq 2$.
\begin{enumerate}
\item
If $\omega$ is $C^{\alpha}$, $\alpha\in[0,1]$ then
\begin{equation}\label{eq:htphitomega}
h_t^*\omega=\phi_t^{X*}\omega+O(t^{\min\{2\alpha,r-1\}}).
\end{equation}
\item
If $V$ is $C^{\beta}$, $\beta\in[0,1]$, then
\begin{equation}\label{eq:htphitV}
h_{t*}V=\phi_{t*}^XV+O(t^{\min\{2\beta,r-1\}}).
\end{equation}
\end{enumerate}
\end{lema}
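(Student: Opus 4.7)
The plan is to reduce to local charts (via a smooth partition of unity) and then compare the two pulled-back forms (respectively pushed-forward multivector fields) pointwise, splitting the difference into two independent errors: a base-point error coming from $h_t(p)\neq \phi_t^X(p)$, and a Jacobian error coming from $Dh_t(p)\neq D\phi_t^X(p)$. The key inputs will be Lemma~\ref{1flow}, which gives $h_t(p)-\phi_t^X(p)=O(t^2)$ and $Dh_t(p)-D\phi_t^X(p)=o(t)$ for $r\geq 2$ (and $O(t^{r-1})$ for $2<r\leq 3$, with the obvious higher-order analogue for larger $r$), together with the H\"older regularity of $\omega$ and $V$.

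For part (i), fix a chart and a point $p$, and let $v_1,\dots,v_k$ be unit tangent vectors. By multilinearity,
\begin{align*}
(h_t^*\omega)_p(v_1,\dots,v_k)-(\phi_t^{X*}\omega)_p(v_1,\dots,v_k)
&=\omega_{h_t(p)}(Dh_t(p)v_1,\dots,Dh_t(p)v_k)\\
&\quad -\omega_{\phi_t^X(p)}(D\phi_t^X(p)v_1,\dots,D\phi_t^X(p)v_k),
\end{align*}
and I would add and subtract the intermediate term $\omega_{\phi_t^X(p)}(Dh_t(p)v_1,\dots,Dh_t(p)v_k)$. The first difference is controlled by the $C^\alpha$ seminorm of $\omega$ times $|h_t(p)-\phi_t^X(p)|^\alpha$, which is $O(t^{2\alpha})$ by \eqref{eq:ht1}. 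The second difference is controlled, via multilinearity and boundedness of $\omega$, by $\|Dh_t-D\phi_t^X\|\cdot\max\{\|Dh_t\|,\|D\phi_t^X\|\}^{k-1}$, and by \eqref{eq:Dht1}--\eqref{eq:Dht12} (applied to iterated second-order Taylor expansions when $r>3$) this is $O(t^{r-1})$ when $r\leq 3$ and $O(t^2)$ when $r\geq 3$. Adding the two contributions gives the estimate $O(t^{\min\{2\alpha,r-1\}})$, uniformly in $p$ and in the chart.

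For part (ii), I would proceed analogously: writing $(h_{t*}V)_p = Dh_t(h_t^{-1}(p))^{\wedge k}V(h_t^{-1}(p))$ and comparing with the corresponding expression for $\phi_t^X$, the base-point error uses $h_t^{-1}(p)-\phi_{-t}^X(p)=O(t^2)$ together with $V\in C^\beta$ to give $O(t^{2\beta})$, while the Jacobian error, by multilinearity in the exterior power, again contributes $O(t^{r-1})$ (or $O(t^2)$ when $r$ is large enough). Alternatively—and this is the cleaner route—one can deduce (ii) from (i) by duality, since $\omega(h_{t*}V)=(h_t^*\omega)(V)\circ h_t^{-1}$ and the same identity holds for $\phi_t^X$; picking a smooth dual frame of forms to test against $V$, the pointwise difference $|\omega(h_{t*}V)(p)-\omega(\phi_{t*}^XV)(p)|$ is bounded by applying (i) to a smooth $\omega$ (which contributes the $O(t^{r-1})$ term) plus the H\"older variation of $\omega(V)$ evaluated at the $O(t^2)$-close points $h_t^{-1}(p)$ and $\phi_{-t}^X(p)$ (which contributes the $O(t^{2\beta})$ term).

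There is no serious obstacle here; the exercise is purely about careful bookkeeping of error terms. The only point that requires mild attention is the case $r=2$, where Lemma~\ref{1flow}(i) gives only $o(t)$ rather than $O(t^{r-1})=O(t)$ for the Jacobian difference; however this is absorbed by the bound $O(t^{\min\{2\alpha,1\}})$ since $o(t)=O(t)$, so the statement holds as written. Uniformity over $M$ follows from the compactness of $M$ together with the fact that all implicit constants depend only on $\|X\|_{C^1}$, $\|\omega\|_{C^\alpha}$ (resp.\ $\|V\|_{C^\beta}$) and on a finite atlas of charts.
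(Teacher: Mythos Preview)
Your proposal is correct and follows essentially the same approach as the paper: split the difference into a base-point error (controlled by the H\"older regularity and $h_t(p)-\phi_t^X(p)=O(t^2)$) and a Jacobian error (controlled by $Dh_t-D\phi_t^X$ via Lemma~\ref{1flow}). The only cosmetic difference is that the paper, in part (i), first reduces in a chart to the two building blocks---a $C^\alpha$ $0$-form $g$ (which isolates the $O(t^{2\alpha})$ base-point error) and a smooth $1$-form $dg$ (which isolates the $O(t^{r-1})$ Jacobian error)---whereas you carry out the same splitting directly on a general $k$-form; and in part (ii) the paper works directly with vector fields rather than invoking duality, but your direct argument is the same as theirs.
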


\begin{proof}

{\bf Part (i).} The formula can be verified locally in charts, and applying an argument similar to the one from Lemma~\ref{le:h}, it is sufficient to verify the formula for a $C^{\alpha}$ 0-form $g$, and a $C^{\infty}$ 1-form $dg$.

So let $g:M\rightarrow\mathbb R$ be $C^{\alpha}$. Then applying \eqref{eq:ht1} we get
$$
h_t^*g(p)-\phi_t^{X*}g(p)=g(h_t(p))-g(\phi_t^X(p))\leq Cd(h_t(p),\phi_t^X(p))^{\alpha}=O(t^{2\alpha}).
$$

Now let $g:M\rightarrow\mathbb R$ be $C^{\infty}$. Applying \eqref{eq:Dht12} and \eqref{eq:ht1} we get
\begin{align*}
(h_t^*-\phi_t^{X*})dg(p)&=d(g\circ h_t)(p)-d(g\circ\phi_t^X)(p)\\
&=dg(h_t(p))Dh_t(p)-dg(\phi_t^X(p)D\phi_t^X(p)\\
&=dg(h_t(p))[Dh_t(p)-D\phi_t^X(p)]+[dg(h_t(p))-dg(\phi_t^X(p)]D\phi_t^X(p)\\
&=O(t^{r-1})+O(t^2)=O(t^{r-1}).
\end{align*}
This finishes the proof of the first part.

{\bf Part (ii).} Since locally every multivector field is a combination of exterior products of vector fields, it is sufficient to verify the formula just for vector fields. So let $V$ be a $C^{\beta}$ vector field on $M$. Then
\begin{eqnarray*}
\|(h_{t*}-\phi_{t*}^X)V(p)\|&=&\|Dh_t(h_t^{-1}(p))V(h_t^{-1}(p))-D\phi^X_t(\phi_{-t}^X(p))V(\phi^X_{-t}(p))\|\\
&\leq&\|Dh_t(h_t^{-1}(p))[V(h_t^{-1}(p))-V(\phi_{-t}^X(p))]\|\\
& &+\|[Dh_t(h_t^{-1}(p))-Dh_t(\phi_{-t}^X(p))]V(\phi_{-t}^X(p))\|\\
& &+\|[Dh_t(\phi_{-t}^X(p))-D\phi_t^X(\phi_{-t}^X(p))]V(\phi_{-t}^X(p))\|\\
&\leq& Cd(h_t^{-1}(p)-\phi_{-t}^X(p))^{\beta}+Cd(h_t^{-1}(p),\phi_{-t}^X(p)))+C\|Dh_t-D\phi_t\|\\
&\leq& Ct^{2\beta}+Ct^2+Ct^{r-1}=O(t^{\min\{2\beta,r-1\}}),
\end{eqnarray*}
where we used again \eqref{eq:Dht12} and \eqref{eq:ht1} (which implies that also $d(h_t^{-1}(p)-\phi_{-t}^X(p))=O(t^2)$). This finishes the proof.

\end{proof}


\subsection{Regularity of $t\mapsto V_t$ and a formula for $V'=\left.\frac{\partial}{\partial t}V_t\right|_{t=0}$.}\label{ssec:regularidadVt}

In this section we will investigate the regularity of the map $t\mapsto V_t$ which is equivalent to finding the regularity of the map $t\mapsto E_t$.

So let us assume that $f_t$ has a dominated splitting $TM=E^1_t\oplus E^2_t\oplus E^3_t$, and denote $E_t:=E^2_t$ and $F_t:=E^1_t\oplus E^3_t$. Let $\lambda^1_{E^1}<\lambda^2_{E^1}<\lambda^1_{E^2}<\lambda^2_{E^2}<\lambda^1_{E^3}<\lambda^2_{E^3}$ be expansion bounds along the three sub-bundles for $f=f_0$:
$$
\lambda^1_{E^1}<m(Df|_{E^1})\leq\| Df|_{E^1}\|<\lambda^2_{E^1},
$$
$$
\lambda^1_{E^2}<m(Df|_{E^2})\leq\| Df|_{E^2}\|<\lambda^2_{E^2},
$$
$$
\lambda^1_{E^3}<m(Df|_{E^3})\leq\| Df|_{E^3}\|<\lambda^2_{E^3},
$$
Then the same relations will hold for $f_t$ and the corresponding decomposition $TM=E^1_t\oplus E^2_t\oplus E^3_t$ for $t\in I$, where $I$ is a small interval around zero. Let $g:M\times I\rightarrow M\times I$ be the $C^r$ diffeomorphism defined by
$$
g(x,t)=(f_t(x),t).
$$
The standard Invariant Section Theorem (\cite{HPS1977}, see also \cite{PSW2004}) tells us that $(p,t)\mapsto F_t(p)$ is of class $C^{\alpha}$ in both $t\in I$ and $p\in M$, for
\begin{equation}\label{eq:alpha}
\alpha=\min\left\{\frac{\log\lambda^1_{E^2}-\log\lambda^2_{E^1}}{\log\lambda^2_{E^3}},\frac{\log\lambda^1_{E^3}-\log\lambda^2_{E^2}}{-\log\lambda^1_{E^1}}\right\}.
\end{equation}
In a similar way, one obtains that $(p,t)\mapsto E_t(p)$ is of class $C^{\beta}$ in both $t\in I$ and $p\in M$, for
\begin{equation}\label{eq:beta}
\beta=\min\left\{\frac{\log\lambda^1_{E^3}-\log\lambda^2_{E^2}}{\log\lambda^2_{E^3}}, \frac{\log\lambda^1_{E^2}-\log\lambda^2_{E^1}}{-\log\lambda^1_{E^1}}\right\},
\end{equation}

If we assume some further regularity of the bundles for $t=0$ (with respect to the point on the manifold $M$), then we can also obtain better regularity with respect to the parameter $t$ at $t=0$. More specifically we have the following result.

\begin{prop}\label{p:Vt}

Assume that $f_t$ has a dominated splitting $TM=E^1_t\oplus E^2_t\oplus E^3_t$ (here $E^1$ or $E^3$ can be trivial). If $E=E^2_0$ is of class $C^{\beta}$ for some $\beta\in[0,1]$, and $r\geq 2$, then the map $t\mapsto V_t$ has expansion of order $\beta$ at $t=0$.

\end{prop}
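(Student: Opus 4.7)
The plan is to represent $E_t$ as a graph over the reference bundle $E = E_0$. Since $E$ and $F$ are transverse, for $t$ sufficiently small there is a unique continuous section $L_t : M \to \mathrm{Hom}(E, F)$ with $L_0 \equiv 0$ and $E_t(p) = \{v + L_t(p)v : v \in E(p)\}$. The invariance $Df_t(p)(E_t(p)) = E_t(f_t p)$ translates into $L_t$ being the unique fixed point of the graph transform
\[
(\mathcal{G}_t L)(f_t p) = \bigl(C_t(p) + D_t(p) L(p)\bigr)\bigl(A_t(p) + B_t(p) L(p)\bigr)^{-1},
\]
where $A_t, B_t, C_t, D_t$ are the blocks of $Df_t$ with respect to the decomposition $TM = E \oplus F$. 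The persistence of the dominated splitting ensures that $\mathcal{G}_t$ is a uniform contraction with rate $\kappa < 1$ on a fixed ball in the space of continuous sections, so the standard fixed-point estimate gives
\[
\|L_t\|_{C^0} \;\leq\; \frac{1}{1-\kappa}\,\|\mathcal{G}_t(0)\|_{C^0}.
\]

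The core estimate is that $\|\mathcal{G}_t(0)\|_{C^0} = O(t^\beta)$. Geometrically, $(\mathcal{G}_t 0)(f_t p) = C_t(p) A_t(p)^{-1}$ measures the angle between $Df_t(p) E(p)$ and $E(f_t p)$, and one bounds this by the triangle inequality:
\[
d\bigl(Df_t(p) E(p),\, E(f_t p)\bigr) \;\leq\; d\bigl(Df_t(p) E(p),\, Df(p) E(p)\bigr) + d\bigl(E(fp),\, E(f_t p)\bigr).
\]
The first term is $O(t)$ because $Df_t - Df = O(t)$ uniformly (using $r \geq 2$). For the second term, $d(fp, f_t p) = O(t)$ from Lemma~\ref{1flow} combined with the assumption that $E$ is $C^\beta$ on a neighborhood of $fp$ yields $O(t^\beta)$. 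Since $\beta \in [0,1]$ the dominant contribution is $t^\beta$, so $\|L_t\|_{C^0} = O(t^\beta)$. This is precisely the step that consumes the $C^\beta$ regularity of $E$ in the manifold variable.

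To finish, I would convert the bound on $L_t$ to one on $V_t$: the multivector $V_t(p)$ is a smooth rational function of $L_t(p)$ (locally of the form $(e_1 + L_t e_1) \wedge \cdots \wedge (e_k + L_t e_k)$ divided by its pairing with $\omega_F$, which stays close to $1$), so $\|V_t - V\|_{C^0} = O(t^\beta)$, giving the claimed expansion of order $\beta$. For the boundary case $\beta = 1$ one sharpens this to genuine differentiability at $t = 0$ by applying the implicit function theorem to $L - \mathcal{G}_t(L) = 0$, which is now $C^1$ in both arguments; differentiating the fixed-point equation and solving produces the explicit formula announced in Table~\ref{table},
\[
V' \;=\; \left[\left.\left(\mathrm{Id} - \tfrac{f_*}{\tilde\eta}\right)\right|_{F \wedge E^{\wedge(k-1)}}\right]^{-1} \mathcal{P}(\LL_X V_E),
\]
where $\mathcal{P}$ is the projection onto $F \wedge E^{\wedge(k-1)}$ along $E^{\wedge k}$; invertibility of the restricted operator is a direct consequence of domination.

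The main obstacle I anticipate is careful bookkeeping in the graph-transform estimate: ensuring that the contraction constant $\kappa$ is uniform in $t$ (so that one may work in a single function space for all small $t$) and isolating the single place where the $C^\beta$ regularity of $E$ is invoked. A secondary technicality, the passage from subspaces $E_t(p)$ to normalized multivectors $V_t(p)$, costs no regularity because the normalization is a smooth operation with denominator bounded away from zero.
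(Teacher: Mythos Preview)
Your outline is correct when one of $E^1$ or $E^3$ is trivial, but it has a genuine gap in the general case where $E=E^2$ sits \emph{strictly between} $E^1$ and $E^3$. In that situation the graph transform $\mathcal G_t$ acting on sections $L:M\to\mathrm{Hom}(E,F)$ is \emph{not} a contraction: writing $L=(L^1,L^3)$ with $L^i:E\to E^i$, the forward graph transform contracts the $L^1$ component (roughly by $\|Df|_{E^1}\|\cdot m(Df|_{E})^{-1}<1$) but \emph{expands} the $L^3$ component (by $m(Df|_{E^3})\cdot\|Df|_{E}\|^{-1}>1$). So the fixed-point inequality $\|L_t\|\le\frac{1}{1-\kappa}\|\mathcal G_t(0)\|$ is not available; what domination gives you is that the linearization of $\mathcal G_t$ at $0$ is \emph{hyperbolic}, hence $(Id-D\mathcal G_t(0))$ is invertible. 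You already invoke exactly this invertibility for the $\beta=1$ case, but your Hölder argument upstream never uses it and therefore breaks down.

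The paper's proof works directly with the multivectors and handles this point explicitly. From the invariance $f_{t*}V_t=\tilde\eta_tV_t$ one extracts
\[
(f_*-\tilde\eta\,Id)(V_t-V)=o(\|V_t-V\|)+O(t^\beta),
\]
where the $O(t^\beta)$ comes from $h_{t*}f_*V-f_*V$ via Lemma~\ref{lem:taylorhV} (this is the same place you use the $C^\beta$ regularity of $E$). The key step is then to show $\|(f_*-\tilde\eta\,Id)(V_t-V)\|\ge C\|V_t-V\|$: one checks that $(V_t-V)/\|V_t-V\|$ lies near $F\wedge E^{\wedge(k-1)}$, and on that subspace the operator $\frac{f_*}{\tilde\eta}$ is hyperbolic (contracting on $E^1\wedge E^{\wedge(k-1)}$, expanding on $E^3\wedge E^{\wedge(k-1)}$), so $f_*-\tilde\eta\,Id$ is bounded below there. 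This replaces your contraction estimate with an injectivity-from-below estimate, and is exactly what is needed. Your argument can be repaired along the same lines: split $L_t=L_t^1+L_t^3$, use forward iteration for $L_t^1$ and backward iteration for $L_t^3$, and bound each piece separately; or simply adopt the paper's hyperbolicity argument on $\mathrm{Hom}(E,F)$.
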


For $\beta=1$ the result was obtain by Dolgopyat in \cite{D2004} (see also \cite{PSW2004} for an alternative proof), and we use their method for $\beta\in(0,Lip]$. The difference is that we will use the action induced on multivector fields instead of the action induced on the Grassmannian. Let us comment that it appears that the result could be improved up to $\beta=1+\beta_0$ where $\beta_0$ is given by the formula \eqref{eq:beta}, and it seems improbable to obtain a similar result for larger values of $\beta$ without further restrictions on the family $f_t$.

\begin{proof}

Suppose that $\beta\in(0, Lip]$ (for $\beta=0$ there is nothing to prove). Recall that since the bundle $E_t$ is invariant under $f_{t*}$, there exists $\eta_t:M\rightarrow(0,\infty)$ such that $(f_t)_*V_t(p)=\eta_t(p)\cdot V_t(f_t(p))$. This means that if we denote $\tilde\eta:=\eta\circ f_t^{-1}$, then we have
\begin{equation}\label{eq:ftvt}
f_{t*}V_t=h_{t*}f_*V_t=\tilde\eta_tV_t.
\end{equation}
In fact $\tilde\eta_t=\omega(h_{t*}f_*V_t)$. Furthermore
\begin{align*}
0&=h_{t*}f_*V_t-\tilde\eta_tV_t\\
&=(h_{t*}f_*-\tilde\eta_tId)(V_t-V)+h_{t*}f_*V-\tilde\eta_tV\\
&=(f_*-\tilde\eta Id)(V_t-V)+[h_{t*}f_*-f_*-(\tilde\eta_t-\tilde\eta)Id](V_t-V)+[h_{t*}f_*V-f_*V]\\
&\quad -(\tilde\eta_t-\tilde\eta)V.
\end{align*}

Observe that $\lim_{t\rightarrow 0}h_{t*}f_*-f_*+(\tilde\eta_t-\tilde\eta)Id=0$ so
$$
[h_{t*}f_*-f_*+(\tilde\eta_t-\tilde\eta)Id](V_t-V)=o(\|V_t-V\|).
$$
Also from Lemma~\ref{lem:taylorhV} we know that $t\mapsto h_{t*}f_*V$ is $C^{\beta}$ in $t=0$ so $h_{t*}f_*V-f_*V=O(t^{\beta})$. Then
\begin{equation}\label{eq:exp}
(f_*-\tilde\eta Id)(V_t-V)-(\tilde\eta_t-\tilde\eta)V=o(\|V_t-V\|)+O(t^{\beta}).
\end{equation}

Let us remark that $V_t-V$ is in the kernel of $\omega$, while $V$ is in the complimentary space $E^{\wedge k}$. Let $\mathcal P:TM\sp{\wedge k}\to \ker(\omega)=F\wedge TM\sp{\wedge(k-1)}$ be the canonical projection parallel to $E^{\wedge k}$, which is given by the formula
\begin{equation}\label{eq:P}
\mathcal P(W)=W-\omega(W)V,\quad (\forall \, W\in TM\sp{\wedge k}).
\end{equation}
Applying the projection $\mathcal P$ to the formula \eqref{eq:exp} we get
\begin{equation}\label{eq:VtV}
\mathcal P[(f_*-\tilde\eta Id)(V_t-V)-(\tilde\eta_t-\tilde\eta)V]=(f_*-\tilde\eta Id)(V_t-V)=o(\|V_t-V\|)+O(t^{\beta})
\end{equation}
{\bf Claim:} $\|(f_*-\tilde\eta Id)(V_t-V)\|\geq C\|V_t-V\|$ for some $C>0$ and small $t$.

If the claim is true, then combined with \eqref{eq:VtV} it gives immediately that $V_t-V=O(t^{\beta})$ as needed.
\begin{proof}[Proof of the claim]
In general there is no need that the operator $f_*-\tilde\eta Id$ is invertible, not even if we restrict it to the kernel of $\omega$. However we will see that if we restrict it to $F\wedge E^{\wedge (k-1)}$ then it is indeed invertible, and this is good enough in order to obtain the claim.

Let $\mathcal T=\frac {f_*}{\tilde\eta}|_{F\wedge E^{\wedge (k-1)}}$. Then $\mathcal T$ can be decomposed into the direct sum $\mathcal T=\mathcal T_1\oplus\mathcal T_3$, where $\mathcal T_i=\mathcal T|_{E^i\wedge E^{\wedge (k-1)}}$, this is because the dominated splitting is invariant under $f_*$. Because of the domination property, one can see that $\mathcal T_1$ is a contraction, while $\mathcal T_3$ is an expansion, in other words the operator $\mathcal T$ is hyperbolic, so $\mathcal T-Id$ is invertible. This in turn implies that $\left.(f_*-\tilde\eta Id)\right|_{F\wedge E^{\wedge (k-1)}}=\tilde\eta(\mathcal T-Id)$ is also invertible, so there exists $C>0$ such that
\begin{equation}\label{eq:boundW}
\|(f_*-\tilde\eta Id)(W)\|> C,\quad\forall\, W\in F\wedge E^{\wedge (k-1)},\quad \|W\|=1.
\end{equation}
From the continuity of the operator $f_*-\tilde\eta Id$, there exists a neighborhood $\mathcal U$ of the set $\{W\in F\wedge E^{\wedge (k-1)}\::\quad \|W\|=1\}$ inside $TM^{\wedge k}$ such that the relation \eqref{eq:boundW} holds for every $W\in\mathcal U$. Then what is left to prove is that $\frac{V_t-V}{\|V_t-V\|}$ is inside $\mathcal U$ for small values of $t$ and $V_t-V\neq0$ (if $V_t-V=0$ then there is nothing to prove).

The fact that $\frac{V_t-V}{\|V_t-V\|}$ is close to $F\wedge E^{\wedge (k-1)}$ follows from the fact that $V\in E^{\wedge k}$, and $V$ and $V_t$ are in fact simple multivectors. Using a partition of unity one can see that it is enough to show this fact locally, and in this case we have
$$
V=V_1\wedge V_2\wedge\dots\wedge V_k,\quad V_i\in E,\quad i\in\{1,2,\dots,k\},
$$
$$
V_t=(V_1+W_{t1})\wedge(V_2+W_{t2})\wedge\dots\wedge (V_k+W_{tk}),\quad W_{ti}\in F,\quad i\in\{1,2,\dots,k\}.
$$
We can suppose that $\|V_{i_1}\wedge\dots\wedge V_{i_l}\|$ and $\|V_{i_1}\|\cdot\dots\cdot\|V_{i_l}\|$ are comparable, for any $\{i_1,\dots i_l\}$ subset of $\{1,2,\dots k\}$ (the vector fields $V_i$ can be chosen to form locally an orthogonal base of $E$ and have all constant size for example). We can decompose $V_t-V=W_t+\tilde W_t$, where
$$
W_t=W_{t1}\wedge V_2\wedge\dots\wedge V_k+\dots+V_1\wedge V_2\wedge\dots\wedge V_{k-1}\wedge W_{tk}\in F\wedge E^{\wedge (k-1)}
$$
and
$$
\tilde W_t=W_{t1}\wedge W_{t2}\wedge V_3\wedge\dots\wedge V_k+\dots+W_{t1}\wedge W_{t2}\wedge\dots\wedge W_{tk}\in F^{\wedge 2}\wedge TM^{\wedge (k-2)}.
$$
Since $F$ and $E$ have the angle uniformly bounded away from zero, we have that $\|W_t\|$ is comparable with $\max\{\|W_{t1}\|,\dots\|W_{tk}\|\}$ uniformly in $t$, and then each term from the formula of $\tilde W_t$ is bounded from above by $D\|W_t\|^2$ for some $D>0$. Then for some $\tilde D>0$ we have that
$$
\|\tilde W_t\|\leq \tilde D\|W_t\|^2.
$$
Estimating the distance between $\frac{W_t}{\|W_t\|}$ and $\frac{V_t-V}{\|V_t-V\|}=\frac{W_t+\tilde W_t}{\|W_t+\tilde W_t\|}$ we get
\begin{align*}
\left\|\frac{W_t}{\|W_t\|}-\frac{W_t+\tilde W_t}{\|W_t+\tilde W_t\|}\right\|&\leq\|W_t\|\cdot\left|\left(\frac 1{\|W_t\|}-\frac 1{\|W_t+\tilde W_t\|}\right)\right| +\frac{\|\tilde W_t\|}{\|W_t+\tilde W_t\|}\\
&\leq \frac{2\|\tilde W_t\|}{\|W_t+\tilde W_t\|}\leq \frac{2\|\tilde W_t\|}{\|W_t\|-\|\tilde W_t\|}\\
&\leq\frac{2\tilde D\|W_t\|}{1-\tilde D\|W_t\|}\leq 4\tilde D\|W_t\|
\end{align*}
which converges uniformly to zero as $t$ goes to zero, so indeed $\frac{V_t-V}{\|V_t-V\|}$ is inside $\mathcal U$ for small enough $t$ and this finishes the proof of the claim.
\end{proof}

We obtained that $V_t-V=O(t^{\beta})$ for $\beta \in (0,Lip]$. If $\beta=1$ then from Lemma~\ref{lem:taylorhV} we know that $t\mapsto h_{t*}f_*V$ is $C^1$ in $t=0$ and
\begin{align*}
h_{t*}f_*V-f_*V&=-t\LL_X(f_*V)+o(t)=-t\LL_X(\tilde\eta V)+o(t)\\
&=-t\tilde\eta\LL_XV+-t\LL_X\tilde\eta V+o(t).
\end{align*}
We also have that $V_t-V=O(t)$, so the relation \eqref{eq:exp} becomes
$$
(f_*-\tilde\eta Id)(V_t-V)-(\tilde\eta_t-\tilde\eta)V=-t\tilde\eta\LL_XV+-t\LL_X\tilde\eta V+o(t).
$$
projecting by $\mathcal P$ on the kernel of $\omega$ we get
$$
(f_*-\tilde\eta Id)(V_t-V)=-t\tilde\eta\mathcal P\LL_XV+o(t).
$$
Dividing by $t$ and taking the limit when $t$ goes to zero we get
$$
(f_*-\tilde\eta Id)\left(\lim_{t\rightarrow 0}\frac{V_t-V}t\right)=-\tilde\eta\mathcal P\LL_X(V),
$$
so $V':=\lim_{t\rightarrow 0}\frac{V_t-V}t$ exists and
\begin{equation}\label{eq:V'}
V'=\left(Id-\frac{f_*}{\tilde\eta}\right)^{-1}\mathcal P\LL_X(V).
\end{equation}
Let us remark that it is easy to see in charts that in fact
\begin{align*}
\LL_X(V)&=\LL_X(V_1\wedge V_2\wedge\dots\wedge V_k)\\
&=(\LL_XV_1)\wedge V_2\wedge\dots\wedge V_k+\dots+V_1\wedge\dots\wedge(\LL_XV_k)\in TM\wedge E^{\wedge (k-1)},
\end{align*}
so $\mathcal P\LL_X(V)\in F\wedge E^{\wedge (k-1)}$ and the inverse of $Id-\mathcal T$ is well defined since $\mathcal T=\frac {f_*}{\tilde\eta}$ is hyperbolic.

\end{proof}

We also obtain a formula for $V'$ if $\beta\geq 1$. Since $F=E^1\oplus E^3$ and $\mathcal P\LL_XV\in F\wedge E^{\wedge (k-1)}$, we can decompose it as
$$
\mathcal P\LL_XV=\mathcal P_1(\LL_XV)+\mathcal P_3(\LL_XV),
$$
where $\mathcal P_i(\LL_XV)\in E^i\wedge E^{\wedge (k-1)}$, $i=1,3$.

\begin{prop}\label{le:formulaV'}
Assume that $f_t$ has a dominated splitting $TM=E^1_t\oplus E^2_t\oplus E^3_t$. If $E=E^2_0$ is of class $C^1$ and $r\geq 2$, then the derivative of the map $t\mapsto V_t$ in $t=0$ is
\begin{equation}\label{eq:cve1}
\begin{aligned}
V'&=\left[\left.\left[Id-\left(\frac{f_*}{\tilde\eta}\right)\right]\right|_{F\wedge E^{\wedge(k-1)}}\right]^{-1}\mathcal P(\LL_XV)\\
&=\sum_{n\geq 0}\left(\frac{f_*}{\tilde\eta}\right)^n\mathcal P_1(\LL_XV)-\sum_{n\geq 1}\left(\frac{f_*}{\tilde\eta}\right)^{-n}\mathcal P_3(\LL_XV).
\end{aligned}
\end{equation}
\end{prop}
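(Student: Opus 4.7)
The first equality in \eqref{eq:cve1} is essentially already in hand from the proof of Proposition~\ref{p:Vt}; that argument produced $V' = (Id-\mathcal T)^{-1}\mathcal P(\LL_XV)$ with $\mathcal T := f_*/\tilde\eta$ restricted to $F\wedge E^{\wedge(k-1)}$, the restriction being invertible precisely because $\mathcal T$ is hyperbolic there. So all that remains is to expand this inverse as the explicit series appearing in \eqref{eq:cve1}.

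My plan is a Neumann-type expansion adapted to the hyperbolic splitting on $F\wedge E^{\wedge(k-1)}$. First I would observe that the decomposition
\begin{equation*}
F\wedge E^{\wedge(k-1)} = \left(E^1\wedge E^{\wedge(k-1)}\right) \oplus \left(E^3\wedge E^{\wedge(k-1)}\right)
\end{equation*}
is $f_*$-invariant, since each factor of the dominated splitting $E^1\oplus E^2\oplus E^3$ is preserved by $Df$. Consequently $\mathcal T = \mathcal T_1\oplus\mathcal T_3$, where $\mathcal T_i := \mathcal T|_{E^i\wedge E^{\wedge(k-1)}}$ for $i=1,3$. Using the domination inequalities together with the fact that $\tilde\eta$ measures the volume expansion of $f_*$ on $E^{\wedge k}$ (by virtue of the normalization $\omega_F(V)=1$), I would check that $\|\mathcal T_1\|<1$ and $m(\mathcal T_3)>1$ uniformly on $M$: heuristically $\mathcal T_1$ trades one factor of expansion along $E$ for one along $E^1$, yielding a contraction, while $\mathcal T_3$ trades it for one along $E^3$, yielding an expansion.

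Next I would apply the standard Neumann series to each piece: on the contracting block one has
\begin{equation*}
(Id-\mathcal T_1)^{-1} = \sum_{n\geq 0}\mathcal T_1^n,
\end{equation*}
and for the expanding block I would rewrite $Id-\mathcal T_3 = -\mathcal T_3(Id-\mathcal T_3^{-1})$ and use that $\mathcal T_3^{-1}$ is a uniform contraction to conclude
\begin{equation*}
(Id-\mathcal T_3)^{-1} = -(Id-\mathcal T_3^{-1})^{-1}\mathcal T_3^{-1} = -\sum_{n\geq 1}\mathcal T_3^{-n}.
\end{equation*}
Splitting $\mathcal P\LL_XV = \mathcal P_1(\LL_XV) + \mathcal P_3(\LL_XV)$ according to the invariant decomposition and identifying $\mathcal T_i^n$ with $(f_*/\tilde\eta)^n$ on each subspace then yields the claimed formula.

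The only substantive step is the verification that $\mathcal T$ is genuinely hyperbolic on $F\wedge E^{\wedge(k-1)}$, i.e.\ that $\|\mathcal T_1\|<1$ and $m(\mathcal T_3)>1$ uniformly. This is already asserted in the proof of Proposition~\ref{p:Vt} and reduces to unwinding the definition of $\tilde\eta$ through the chosen normalization of $\omega_F$ and $V$ and invoking the domination inequalities; once this is in place, the series manipulations above are purely formal functional calculus.
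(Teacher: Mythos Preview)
Your proposal is correct and follows essentially the same route as the paper: invoke the formula $V'=(Id-\mathcal T)^{-1}\mathcal P\LL_XV$ from Proposition~\ref{p:Vt}, split $\mathcal T=\mathcal T_1\oplus\mathcal T_3$ along the invariant decomposition $F\wedge E^{\wedge(k-1)}=(E^1\wedge E^{\wedge(k-1)})\oplus(E^3\wedge E^{\wedge(k-1)})$, and expand each block via the appropriate Neumann series. The paper's proof is slightly terser (it simply asserts the two series identities \eqref{eq:L1} and \eqref{eq:L3} without your intermediate factorization $Id-\mathcal T_3=-\mathcal T_3(Id-\mathcal T_3^{-1})$), but the content is the same.
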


\begin{proof}
Recall that the formula \eqref{eq:V'} gives us that $V'=(Id-\mathcal T)^{-1}\mathcal P\LL_XV$. Also the operator $\mathcal T$ is hyperbolic, and it can be decomposed into the direct sum $\mathcal T=\mathcal T_1\oplus\mathcal T_3$, where $\mathcal T_i=\mathcal T|_{E^i\wedge E^{\wedge (k-1)}}$, $i=1,3$. Then we also have $(Id-\mathcal T)^{-1}=(Id-\mathcal T_1)^{-1}\oplus(Id-\mathcal T_3)^{-1}$. Since $\mathcal T_1$ is a contraction, we have that
\begin{equation}\label{eq:L1}
(Id-\mathcal T_1)^{-1}=\sum_{n\geq 0}\mathcal T_1^n,
\end{equation}
and since $\mathcal T_3$ is an expansion, we have
\begin{equation}\label{eq:L3}
(Id-\mathcal T_3)^{-1}=-\sum_{n\geq 1}\mathcal T_3^{-n}.
\end{equation}
Putting the formulas \eqref{eq:L1} and \eqref{eq:L3} together we obtain that indeed
$$
V'=(Id-\mathcal T)^{-1}\mathcal P\LL_XV=(Id-\mathcal T_1)^{-1}\mathcal P_1(\LL_XV)+(Id-\mathcal T_3)^{-1}\mathcal P_3(\LL_XV)
$$
satisfies the desired formula.

\end{proof}


\section{A result on regularity of averaged observables for flows.}\label{sec:regularidad}

In this section we will prove Theorem~\ref{teo:flow}. The proof is based on the following lemma:

\begin{lema}\label{le:Fourier}
Let $\mathbb T$ be the circle $[0,2\pi]|_{0=2\pi}$, and let $f,g:\mathbb T\rightarrow\mathbb R$ be continuous functions. Suppose that $f$ is $C^{\alpha}$ and $g$ is $C^{\beta}$, $\alpha,\beta\geq 0$, and let $h$ be the convolution $f\star g$, i.e. $h(t)=\int_{\mathbb T}f(x)g(t-x)dx$. If either $\alpha+\beta$ is not an integer, or at least one of $\alpha$ or $\beta$ is an integer, then $h$ is $C^{\alpha+\beta}$ and $\| h\|_{C^{\alpha+\beta}}\leq C_{\alpha,\beta}\| f\|_{C^{\alpha}}\| g\|_{C^{\beta}}$. If $\alpha,\beta\notin\mathbb N$ and $\alpha+\beta\in\mathbb N$ then $h$ is $C^{\alpha+\beta-1+Zygmund}$ with the modulus of continuity $C\|f\|_{C^{\alpha}}\|g\|_{C^{\beta}}|t\log t|$.
\end{lema}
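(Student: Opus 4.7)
The proof proceeds by Fourier analysis on the circle $\mathbb T$, using the Littlewood--Paley characterization of H\"older and Zygmund spaces. The starting point is that convolution on $\mathbb T$ diagonalizes in Fourier space, $\widehat{f\star g}(n)=2\pi\hat f(n)\hat g(n)$, so the Fourier decay rates of the two factors combine multiplicatively, which is precisely what the regularity claim should reflect.

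\emph{Step 1 (Littlewood--Paley set-up).} Fix a smooth dyadic partition of unity $\{\psi_j\}_{j\geq -1}$ on $\mathbb Z$ with $\psi_j$ supported in $\{2^{j-1}\leq |n|<2^{j+1}\}$ for $j\geq 0$, and let $\Delta_j u(x):=\sum_n\psi_j(n)\hat u(n)e^{inx}$ be the associated frequency projection. I will invoke the classical dictionary (see e.g.\ Stein's \emph{Singular Integrals} or Grafakos' \emph{Classical Fourier Analysis}): for $s>0$ with $s\notin\mathbb N$, $u\in C^s(\mathbb T)$ is equivalent to $\|\Delta_j u\|_\infty\lesssim 2^{-js}\|u\|_{C^s}$, while for $s=N\in\mathbb N$ the same decay rate instead characterizes the Zygmund class $\Lambda_N$, whose $(N-1)$-th derivative has modulus of continuity $O(|t\log t|)$ rather than $O(|t|)$.

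\emph{Step 2 (Block computation and conclusion).} Since $\Delta_j$ is a Fourier multiplier and $\psi_j,\psi_k$ have disjoint supports whenever $|j-k|\geq 2$, the cross-convolution $\Delta_j f\star\Delta_k g$ vanishes identically in that range; hence $\Delta_j h$ is a sum of a bounded number of blocks $\Delta_j f\star\Delta_k g$ with $|k-j|\leq 2$, and Young's inequality yields
$$\|\Delta_j h\|_\infty\leq\max_{|k-j|\leq 2}\|\Delta_j f\|_\infty\,\|\Delta_k g\|_{L^1}\lesssim 2^{-j(\alpha+\beta)}\|f\|_{C^\alpha}\|g\|_{C^\beta}.$$
Reading this decay backwards through Step~1 gives the lemma: if $\alpha+\beta\notin\mathbb N$ then $h\in C^{\alpha+\beta}$ with the announced norm bound, and if $\alpha+\beta=N\in\mathbb N$ with $\alpha,\beta\notin\mathbb N$ then only $h\in\Lambda_N$ is obtained, giving the $|t\log t|$ modulus. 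The case where one of the exponents, say $\alpha=k\in\mathbb N$, is a positive integer is elementary: differentiating under the convolution gives $h^{(k)}=f^{(k)}\star g$ with $f^{(k)}$ continuous, and the trivial estimate
$$\bigl|(f^{(k)}\star g)(t+s)-(f^{(k)}\star g)(t)\bigr|\leq\|f^{(k)}\|_{L^1}\,\|g(\cdot+s)-g(\cdot)\|_\infty$$
places $h^{(k)}$ in $C^\beta$, so $h\in C^{\alpha+\beta}$; iterating takes care of the doubly-integer case.

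\emph{Main obstacle.} The conceptually delicate point is the endpoint $\alpha+\beta\in\mathbb N$ with $\alpha,\beta\notin\mathbb N$: the decay $\|\Delta_j h\|_\infty\lesssim 2^{-jN}$ does \emph{not} put $h$ in $C^N$, since $C^N\subsetneq\Lambda_N$, and the logarithmic loss appearing in the modulus of continuity is known to be sharp. Rather than reprove the Littlewood--Paley/Zygmund dictionary from scratch, I would quote the classical theory; once this is in hand, the argument reduces to the two-line Fourier computation above.
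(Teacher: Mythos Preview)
Your argument is correct and shares the paper's core idea---a dyadic frequency decomposition of the convolution---but the execution differs. The paper works with raw Fourier coefficients: it bounds $\sum_{|n|=2^k}^{2^{k+1}}|\hat f(n)|^2\lesssim C_f^2 2^{-2k\alpha}$ via Parseval applied to $f(\cdot-t)-f$, then uses Cauchy--Schwarz to obtain $\sum_{|n|=2^k}^{2^{k+1}}|\hat h(n)|\lesssim C_fC_g 2^{-k(\alpha+\beta)}$, and finally estimates $|h(t)-h(s)|$ directly by splitting the Fourier series at frequency $\sim|t-s|^{-1}$. Your route instead packages the same dyadic information through Littlewood--Paley blocks and Young's inequality, and then quotes the Besov/Zygmund dictionary rather than redoing the low/high frequency split by hand. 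Both routes yield the same bounds; the paper's version is fully self-contained and elementary (no black boxes beyond Parseval), which is useful because the paper later needs the explicit constant behaviour $C_{\alpha,\beta}\sim(1-\alpha-\beta)^{-1}$ near the endpoint $\alpha+\beta=1$ (Remark~\ref{rem:HZ}) to handle the Zygmund case inside the proof of Theorem~\ref{teo:flow}. Your approach recovers this constant as well, but only after unpacking the quoted characterisation, so if you go this route you should make that dependence explicit.
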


This result seems to be known in the more general context of Besov spaces, but since we didn't find a reference, we need the exact bounds, and the proof is fairly simple, we will include it here.

\begin{proof}
{\bf Step 1: Reduction to the case $\alpha,\beta\in (0,1)$.} Let us remark first that the problem can be easily reduced to the case when $\alpha,\beta\in[0,1)$. Indeed, let $\alpha=a+\alpha',\ \beta=b+\beta',\ a,b\in\mathbb N,\ \alpha',\beta'\in[0,1)$. Then differentiating inside the integral and eventually changing the variable, we get $h^{(a+b)}(t)=\int_{\mathbb T}f^{(a)}(x)g^{(b)}(t-x)dx$, with $f^{(a)}$ of class $C^{\alpha'}$ and $g^{(b)}$ of class $C^{\beta'}$. Furthermore $\| h\|_{C^{a+b}}\leq 2\pi\| f\|_{C^{a}}\| g\|_{C^{b}}$.

So we will assume that $\alpha,\beta\in[0,1)$. If either $\alpha$ or $\beta$ are 0 then the result is trivial, so we consider only the case when $\alpha,\beta\in(0,1)$.

{\bf Step 2: Estimate on Fourier coefficients of $h$.} Let $\hat f(n),\ \hat g(n)$ be the Fourier coefficients of $f$ and $g$. Then we know that $\hat h(n)=\widehat{(f\star g)}(n)=2\pi\hat f(n)\hat g(n)$, and the Fourier series of $f,g,h$ are uniformly convergent since the functions are H\"older.

The Fourier coefficients of $f_t(x)=f(x-t)$ for some fixed $t$ are $e^{-int}\hat f(n)$, so the Fourier coefficients of $f_t-f$ are $(e^{-int}-1)\hat f(n)$. If $C_f>0$ is minimal such that $|f(x)-f(y)|\leq C_f|x-y|^{\alpha}$, then a bound for the $L^2$ norm of $f_t-f$ is $\sqrt{2\pi}C_ft^{\alpha}$. Choosing $t_k=2^{-k}\cdot 2\pi/3$, we observe that for $2^k\leq|n|\leq 2^{k+1}$ then $|nt_k|\in [2\pi/3,4\pi/3]$, so $|e^{-int_k}-1|>\sqrt 3$. Applying Parseval identity for $f_{t_k}-f$, we get
\begin{eqnarray*}
\sum_{|n|=2^k}^{2^{k+1}}|\hat f(n)|^2&\leq&\frac 13\sum_{|n|=2^k}^{2^{k+1}}|\hat f(n)|^2|e^{-int_k}-1|^2
\leq\frac 13\sum_{n\in\mathbb Z}|\widehat{(f_{t_k}-f)}|^2\\
&\leq& \frac{2\pi}3 C_f^2t_k^{2\alpha}\leq CC_f^22^{-2k\alpha},
\end{eqnarray*}
where $C$ denotes some universal constant. A similar computation will give that
$$
\sum_{|n|=2^k}^{2^{k+1}}|\hat g(n)|^2\leq CC_g^22^{-2k\beta},
$$
and then by Cauchy-Schwartz
$$
\sum_{|n|=2^k}^{2^{k+1}}|\hat h(n)|=2\pi\sum_{|n|=2^k}^{2^{k+1}}|\hat f(n)\hat g(n)|\leq CC_fC_g2^{-k(\alpha+\beta)}.
$$

{\bf Step 3: The case $\alpha+\beta<1$.} Consider first the case $\alpha+\beta<1$. Let $t,s\in\mathbb T^1$ and $k_0\in\mathbb N$ such that $2^{-(k_0+1)}\leq |t-s|<2^{-k_0}$. Let $c>0$ be such that if $|x|<1$ then $|1-e^x|\leq c|x|$. We obtain
\begin{align*}
|h(t)-h(s)|&=2\pi\sum_{n\in\mathbb Z}\hat f(n)\hat g(n)e^{ins}(1-e^{in(t-s)})\\
&\leq\sum_{k<k_0}\sum_{|n|=2^k}^{2^{k+1}-1}|\hat h(n)|\cdot cn|t-s|+\sum_{k\geq k_0}\sum_{|n|=2^k}^{2^{k+1}-1}2|\hat h(n)|\\
&\leq\sum_{k<k_0}CC_fC_g2^{-k(\alpha+\beta)}2^{k+1}2^{-k_0}+\sum_{k\geq k_0}CC_fC_g2^{-k(\alpha+\beta)}\\
&=CC_fC_g2^{-k_0+1}\sum_{k<k_0}2^{k(1-\alpha-\beta)}+CC_fC_g\sum_{k\geq k_0}2^{-k(\alpha+\beta)}\\
&=CC_fC_g2^{-k_0+1}\frac{2^{k_0(1-\alpha-\beta)}-1}{1-2^{1-\alpha-\beta}}+CC_fC_g2^{-k_0(\alpha +\beta)}\frac 1{1-2^{-(\alpha+\beta)}}\\
&\leq C_{\alpha,\beta}C_fC_g2^{-(k_0+1)(\alpha+\beta)}\leq  C_{\alpha,\beta}C_fC_g|t-s|^{\alpha+\beta}.
\end{align*}
This shows that $h$ is $C^{\alpha +\beta}$ and $\|h\|_{C^{\alpha+\beta}}\leq C_{\alpha,\beta}\|f\|_{C^{\alpha}}\|g\|_{C^{\beta}}$ and completes the case $\alpha+\beta<1$.

\begin{rem}\label{rem:HZ}
One can see from the computation above that for $\alpha+\beta\in(1/2,1)$, one can take $C_{\alpha,\beta}=\frac C{1-\alpha-\beta}$ for some universal constant $C$.
\end{rem}

{\bf Step 4: The case $\alpha+\beta=1$.} If $\alpha+\beta=1$ then we get
\begin{eqnarray*}
|h(t)-h(s)|&=&CC_fC_g2^{-k_0+1}\sum_{k<k_0}2^0+CC_fC_g\sum_{k\geq k_0}2^{-k}\\
&=&CC_fC_gk_02^{-k_0}+CC_fC_g2^{-k_0}\\
&\leq& CC_fC_gk_02^{-(k_0+1)}\leq  CC_fC_g|(t-s)\log(t-s)|.
\end{eqnarray*}
Then the modulus of continuity of $h$ is $C\|f\|_{C^{\alpha}}\|g\|_{C^{\beta}}|t\log t|$ so $h$ is indeed Zygmund.

{\bf Step 5: The case $\alpha+\beta>1$.} Now consider $\alpha+\beta>1$. We remark first that
$$
\sum_{|n|=2^k}^{2^{k+1}}|n\hat h(n)|\leq 2^{k+1}\sum_{|n|=2^k}^{2^{k+1}}|\hat h(n)|=CC_fC_g2^{-k(\alpha+\beta-1)}.
$$
This implies that
$$
\sum_{n\in\mathbb Z}n|\hat h(n)|=\sum_{k\in\mathbb N}\sum_{|n|=2^k}^{2^{k+1}}|n\hat h(n)|\leq\sum_{k\in\mathbb N}CC_fC_g2^{-k(\alpha+\beta-1)}=\frac C{\alpha+\beta-1}C_fC_g
$$
is absolutely convergent. This implies that $h$ is $C^1$ and gives the bound on the derivative of $h$, while the Fourier coefficients of $h'$ will be $\widehat{h'}(n)=in\hat h(n)$.

The proof that $h'$ is $C^{\alpha+\beta-1}$ is similar to the proof of the H\"older continuity of $h$ in the case $\alpha+\beta<1$, one just uses the relation
$$
\sum_{|n|=2^k}^{2^{k+1}}|\hat {h'}(n)|=\sum_{|n|=2^k}^{2^{k+1}}|n\hat h(n)|\leq CC_fC_g2^{-k(\alpha+\beta-1)}.
$$
\end{proof}

\begin{rem}
We remark that the case $\alpha+\beta\in\mathbb N$ is indeed special. One can see this by taking $f(x)=g(x)=\sum_{n=0}^{\infty}\frac 1{2^n}\sin 2^{2n}x$. Then $f$ and $g$ are $C^{\frac 12}$, while $h(t)=\pi\sum_{n=0}^{\infty}\frac1{2^{2n}}\cos2^{2n}t$ is Zygmund but it is not Lipschitz because it has the derivative infinite in 0.
\end{rem}


Now we will prove Theorem~\ref{teo:flow}.

\begin{proof}
We start with the remark that again we can reduce to the case when $\alpha,\beta\in[0,1)$. If $\alpha=a+\alpha',\ \beta=b+\beta',\ a,b\in\mathbb N,\ \alpha',\beta'\in[0,1)$, denote $f_a(x)=\frac{\partial^a}{\partial s^a}f(\phi_s(x))|_{s=0}$ and $g_b(x)=\frac{\partial^b}{\partial s^b}g(\phi_s(x))|_{s=0}$. Differentiating inside the integral and eventually changing the variable we get again that
$$
h^{(a+b)}(t)=\int_Mf_a(x)g_b(\phi_t(x))d\mu,
$$
so it is enough to show the result for $f_a$ being $C^{\alpha'}$ and $g_b$ being $C^{\beta'}$.

If $\alpha=0$ then clearly
$$
|h(t)-h(t')|\leq\int_M|f(x)|\cdot|g(\phi_t(x))-g(\phi_{t'}(x))|d\mu\leq\| f\|_{C^0}C_g\| X\|_{C^0}|t-t'|^{\beta},
$$
so $h$ is clearly $C^{\beta}$ with the required bound. The case $\beta=0$ is similar.

Consequently we can assume from now on that $\alpha,\beta\in(0,1)$.

{\bf The case of ergodic $\mu$.} We assume first that $\mu$ is ergodic. If $x\in M$ is a generic point then from Birkhoff Ergodic Theorem we have
$$
h(t)=\int_Mf(x)g(\phi_t(x))d\mu=\lim_{T\rightarrow\infty}\frac 1T\int_0^Tf(\phi_s(x))g(\phi_{s+t}(x))ds.
$$

By Poincar\'e recurrence, there exist a sequence $t_n\rightarrow\infty$, $n\in\mathbb N$, such that $\lim_{n\rightarrow\infty}\phi_{t_n}(x)=x$. Let $T_N=\frac{t_n+1}{2\pi}$. We can construct smooth closed curves $s\in[0,2\pi T_n]\mapsto \phi^n_s(x)\in M$ obtained by keeping $\phi^n_s(x)=\phi_s(x)$ for $s\in[0,t_n]$, and completing with the curve $\phi^n_s(x)$, $s\in[t_n,t_n+1]$. For $n$ sufficiently large we can assume that the curve $\phi^n_s(x)$ is in the neighborhood of the support of $\mu$ where the regularity of $f$ and $g$ is satisfied, and $\left\|\frac{\partial}{\partial s}\phi^n_s(x)\right\|\leq \|X\|_{C^0}$. Then let
$$
h_n(t)=\frac 1{2\pi T_n}\int_0^{2\pi T_n}f(\phi^n_s(x))g(\phi^n_{s+t}(x))ds=\frac 1{2\pi}\int_0^{2\pi}f(\phi^n_{T_nr}(x))g(\phi^n_{T_nr+t}(x))dr.
$$

Clearly $\lim_{n\rightarrow\infty}h_n(t)=h(t)$ for any $t$, so $h_n$ converges pointwise to $h$. Let $f_n(r)=f(\phi^n_{T_nr}(x))$ and $g_n(r)=g(\phi^n_{T_nr}(x))$. Then
\begin{align*}
|f_n(r)-f_n(r')|&=|f(\phi^n_{T_nr}(x))-f(\phi^n_{T_nr'}(x))|\leq \| f\|_{C^{\alpha}}d(\phi^n_{T_nr}(x),\phi^n_{T_nr'}(x))^{\alpha}\\
&\leq \|X\|_{C^0}^{\alpha}\| f\|_{C^{\alpha}}T_n^{\alpha}|r-r'|^{\alpha},
\end{align*}
so $f_n$ is $C^{\alpha}$ and
$$
\| f_n\|_{C^{\alpha}}\leq\|X\|_{C^0}^{\alpha}\| f\|_{C^{\alpha}}T_n^{\alpha}.
$$
Similarly $g_n$ is $C^{\beta}$ with and 
$$
\| g_n\|_{C^{\beta}}\leq\|X\|_{C^0}^{\beta}\| g\|_{C^{\beta}}T_n^{\beta}.
$$
We also have that
$$
h_n(t)=\frac 1{2\pi}\int_0^{2\pi}f_n(r)g_n(r+t/T_n)dr.
$$
Observe that if $\sigma$ is the involution $\sigma(r)=-r$, and $H_n:=\sigma\circ (f_n\star(g_n\circ\sigma))$, then
$$
h_n(t)=\sigma\circ (f_n\star(g_n\circ\sigma))\left(\frac t{T_n}\right)=H_n\left(\frac t{T_n}\right).
$$
Since $\sigma$ is an isometry, Lemma~\ref{le:Fourier} says that $H_n$ must be $C^{\alpha+\beta}$ (if $\alpha+\beta\neq 1$) and
$$
\|H_n\|_{C^{\alpha+\beta}}\leq C_{\alpha,\beta}\|X\|_{C^0}^{\alpha+\beta}T_n^{\alpha+\beta}\| f\|_{C^{\alpha}}\| g\|_{C^{\beta}}.
$$

{\bf Sub-case $\alpha+\beta<1$.} If $\alpha+\beta<1$ then
$$
|h_n(t)-h_n(t')|=\left|H_n\left(\frac t{T_n}\right)-H_n\left(\frac{t'}{T_n}\right)\right|\leq C_{\alpha,\beta}\|X\|_{C^0}^{\alpha+\beta}\| f\|_{C^{\alpha}}\| g\|_{C^{\beta}}|t-t'|^{\alpha+\beta},
$$
so $h_n$ are uniformly $C^{\alpha+\beta}$, which implies that $h=\lim_{n\rightarrow\infty}h_n$ must be also $C^{\alpha+\beta}$ with the same upper bound on the $C^{\alpha+\beta}$ norm. It also implies the the limit $h_n\rightarrow h$ is uniform on compact sets.

{\bf Sub-case $\alpha+\beta=1$.} In this case Lemma~\ref{le:Fourier} says that $H_n$ are Zygmund, with the modulus of continuity $C\|X\|_{C^0}\|f\|_{C^{\alpha}}\|g\|_{C^{\beta}}T_n|t\log t|$. Then we get that $h_n$ is also Zygmund, however the modulus of continuity may not be uniform with respect to $n$ so we cannot pass to the limit.

We will use instead the previous step and the Remark~\ref{rem:HZ}. Since for $\alpha'<\alpha$, the $C^{\alpha'}$ norm is bounded from above by the $C^{\alpha}$ norm (eventually multiplied by a fixed constant), we get that for every $s\in(1/2,1)$ and any $n$, $h_n$ is $C^s$ and
$$
|h_n(t)-h_n(t')|\leq  \frac C{1-s}\|X\|_{C^0}^s\| f\|_{C^{\alpha}}\| g\|_{C^{\beta}}|t-t'|^s.
$$
If $|t-t'|$ is sufficiently small, we can take $s=1+\frac 1{\log\left(\|X\|_{C^0}|t-t'|\right)}$  and we get
\begin{align*}
|h_n(t)-h_n(t')|&\leq  -C\| f\|_{C^{\alpha}}\| g\|_{C^{\beta}}\log\left(\|X\|_{C^0}|t-t'|\right)\left(\|X\|_{C^0}|t-t'|\right)^{1+\frac 1{\log\left(\|X\|_{C^0}|t-t'|\right)}}\\
&=-eC\| f\|_{C^{\alpha}}\| g\|_{C^{\beta}}\|X\|_{C^0}|t-t'|\left(\log|t-t'|+\log\|X\|_{C^0}\right).
\end{align*}
In conclusion, $h_n$ is indeed uniformly Zygmund, so we can pass to the limit and conclude that $h$ is also Zygmund.

{\bf Sub-case $\alpha+\beta>1$.} Again we will have that $H_n$ must be $C^{\alpha+\beta}$ with the norm $C_{\alpha,\beta}\|X\|_{C^0}^{\alpha+\beta}T_n^{\alpha+\beta}\| f\|_{C^{\alpha}}\| g\|_{C^{\beta}}$. In particular $H_n$ is $C^1$ and the derivative $H_n'$ is $C^{\alpha+\beta-1}$ with the constant $C_{\alpha,\beta}\|X\|_{C^0}^{\alpha+\beta}T_n^{\alpha+\beta}\| f\|_{C^{\alpha}}\| g\|_{C^{\beta}}$. Then $h_n$ must be also $C^1$ and
\begin{align*}
|h'_n(t)-h'_n(t')|&=\frac 1{T_n}\left|H'_n\left(\frac t{T_n}\right)-H'_n\left(\frac{t'}{T_n}\right)\right|\\
&\leq C_{\alpha,\beta}\|X\|_{C^0}^{\alpha+\beta}\| f\|_{C^{\alpha}}\| g\|_{C^{\beta}}|t-t'|^{\alpha+\beta-1},
\end{align*}
so $h_n'$ verifies the $(\alpha+\beta-1)$-H\"older condition uniformly with respect to $n$.

We claim that $h_n'$ are also uniformly bounded (this does not follow directly from the bounds above).

Again we know that for any $s\in(0,1)$, the maps $h_n$ are uniformly $C^s$, the $C^s$ norms of $h_n$ are uniformly bounded by $C_s\|X\|_{C^0}^r\| f\|_{C^{\alpha}}\| g\|_{C^{\beta}}$, and $h_n$ converges uniformly on compact sets to $h$.

In particular for $s=\frac 12$ the $C^{\frac 12}$ property of $h_n$ gives
$$
|h_n(t+a)-h_n(t)|\leq\| h_n\|_{C^{\frac 12}}a^{1/2},
$$
while the $C^{\alpha+\beta-1}$ condition on $h_n'$ gives
$$
|h_n'(t)-h_n'(s)|\leq\| h_n'\|_{C^{\alpha+\beta-1}}|t-s|^{\alpha+\beta-1}\leq\| h_n'\|_{C^{\alpha+\beta-1}}a^{\alpha+\beta-1},\ \ \forall s\in[t,t+a],
$$
or
$$
h_n'(t)\leq \| h_n'\|_{C^{\alpha+\beta-1}}a^{\alpha+\beta-1}+h_n'(s).
$$
Then we get
\begin{eqnarray*}
h_n'(t)&\leq &\| h_n'\|_{C^{\alpha+\beta-1}}a^{\alpha+\beta-1}+\frac 1a\int_t^{t+a}h_n'(s)ds\\
&=&\| h_n'\|_{C^{\alpha+\beta-1}}a^{\alpha+\beta-1}+\frac 1a(h_n(t+a)-h_n(t))\\
&\leq&\| h_n'\|_{C^{\alpha+\beta-1}}a^{\alpha+\beta-1}+\| h_n\|_{C^{\frac 12}}a^{-1/2}\\
&\leq&C_{\alpha,\beta}\| f\|_{C^{\alpha}}\| g\|_{C^{\beta}}\left[\|X\|_{C^0}^{\alpha+\beta}a^{\alpha+\beta-1}+\|X\|_{C^0}^{1/2}a^{-1/2}\right].
\end{eqnarray*}
Choosing $a=\|X\|_{C^0}^{-1}$ we get
$$
h_n'(t)\leq C_{\alpha,\beta}\| f\|_{C^{\alpha}}\| g\|_{C^{\beta}}\|X\|_{C^0}.
$$

A similar argument works for $-h_n'(t)$, so we have the uniform bounds for $| h_n'|$. Using also the uniform H\"older conditions on $h_n'$, we can apply Arzela-Ascoli in order to obtain a subsequence $h_{n_k}'$ convergent (uniformly on compact sets) to some $h'$, and this will imply that $h'$ must be equal to the derivative of $h$. The uniform bounds on $h_n'$ transfer to $h'$, so
$$
\|h\|_{C^1}\leq C_{\alpha,\beta}\| f\|_{C^{\alpha}}\| g\|_{C^{\beta}}\|X\|_{C^0}
$$

This concludes the proof for the case when $\mu$ is ergodic.

{\bf The case of general $\mu$.} Now suppose that $\mu$ is not ergodic, then it must have an ergodic decomposition
$$
\mu=\int_{\mathcal M_e}\nu \,dm_{\mu}(\nu),
$$
where $\mathcal M_e$ are the ergodic invariant probabilities of $\phi$ and $m_{\mu}$ is a Borel probability measure on $\mathcal M_e$. Then
$$
h(t)=\int_Mf(x)g(\phi_t(x))d\mu=\int_{\mathcal M_e}\int_Mf(x)g(\phi_t(x))d\nu dm_{\mu}(\nu):=\int_{\mathcal M_e}h_{\nu}(t)dm_{\mu}(\nu).
$$
Now since $h_{\nu}(t)$ are $C^{\alpha+\beta}$ in $t$ with uniform bounds independent of $\nu$, then the same must be true for $h(t)$, and the bounds are preserved. This concludes the proof of the theorem.

\end{proof}


\section{Regularity of the averaged Lyapunov exponents}\label{sec:holder}

Now we will prove Theorem~\ref{teo:lyapunovH}. We will invoke frequently the following lemma of calculus, we will omit its proof.

\begin{lema}\label{le:lemalog}
Let $\alpha:I\times M\to\mathbb{R}$ continuous in $t$ and $r\geq 0$. If $0<c<\alpha(t)$, then
\begin{equation}\label{eq:logbas1}
\log\left(\alpha(t)+o,O(t^r,t^r\log t)\right)=\log\alpha(t)+o,O(t^r,t^r\log t).
\end{equation}
In particular, if $\alpha:I\times M\to\mathbb{R}$ continuous with respect to  $t\in I$ uniformly with respect to $p\in M$  and $\nu$ is a  probability Borel measure on $M$, then
\begin{equation}\label{eq:logbas2}
\int\log\left(\alpha(t,p)+o,O(t^r,t^r\log t)\right)d\nu(p)=\int\log\alpha(t,p)d\nu(p)+o,O(t^r,t^r\log t).
\end{equation}
\end{lema}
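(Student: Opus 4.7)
The plan is to reduce the statement to the elementary inequality $|\log(1+x)|\leq 2|x|$ valid for $|x|\leq 1/2$. Writing the perturbed quantity as $\alpha(t)+\varepsilon(t)$, where $\varepsilon(t)$ denotes a term of one of the four types $o(t^r)$, $O(t^r)$, $o(t^r\log t)$ or $O(t^r\log t)$, I would factor:
\begin{equation*}
\log(\alpha(t)+\varepsilon(t))=\log\alpha(t)+\log\!\left(1+\tfrac{\varepsilon(t)}{\alpha(t)}\right).
\end{equation*}
The hypothesis $\alpha(t)>c>0$ implies $|\varepsilon(t)/\alpha(t)|\leq c^{-1}|\varepsilon(t)|$, so the quotient belongs to the same asymptotic class as $\varepsilon(t)$ (with the implicit constant multiplied by $c^{-1}$). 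Because the quotient tends to $0$ as $t\to 0$, it lies in $[-1/2,1/2]$ for $|t|$ sufficiently small, hence $\bigl|\log(1+\varepsilon(t)/\alpha(t))\bigr|\leq 2c^{-1}|\varepsilon(t)|$, which is again of the same asymptotic type. This gives the first identity.

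For the integrated statement, I would simply run the same argument pointwise in $p$ and check that the implicit constants are independent of $p$. The continuity of $\alpha(t,p)$ in $t$ uniformly in $p$, together with the uniform lower bound $\alpha(t,p)>c$, guarantees that the pointwise estimate $|\log(1+\varepsilon(t,p)/\alpha(t,p))|\leq 2c^{-1}|\varepsilon(t,p)|$ holds with the same threshold on $|t|$ for every $p\in M$. If the original error $\varepsilon(t,p)$ is of type $o(t^r)$, $O(t^r)$, $o(t^r\log t)$, or $O(t^r\log t)$ uniformly in $p$, one obtains
\begin{equation*}
\int_M\log(\alpha(t,p)+\varepsilon(t,p))\,d\nu(p)=\int_M\log\alpha(t,p)\,d\nu(p)+\int_M\log\!\left(1+\tfrac{\varepsilon(t,p)}{\alpha(t,p)}\right)d\nu(p),
\end{equation*}
and the last integral is controlled by a uniform multiple of $|t|^r$ or $|t|^r|\log t|$, since $\nu$ is a probability measure.

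There is essentially no hard step: the only point requiring care is parsing the compact notation ``$o,O(t^r,t^r\log t)$'' as referring to any of four asymptotic classes, and then verifying that the argument is uniform in $p\in M$ so that the pointwise bound can be integrated against $\nu$. Uniformity follows directly from the hypotheses, because the threshold on $|t|$ that forces $|\varepsilon(t,p)/\alpha(t,p)|\leq 1/2$ depends only on $c$ and on the uniform size of $\varepsilon(t,\cdot)$, not on individual points $p$.
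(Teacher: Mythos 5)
Your proof is correct, and the paper in fact omits the proof of this lemma altogether (``we will omit its proof''), so there is nothing to compare against; your factorization $\log(\alpha(t)+\varepsilon(t))=\log\alpha(t)+\log\bigl(1+\varepsilon(t)/\alpha(t)\bigr)$ combined with the lower bound $\alpha>c>0$ and the elementary estimate $|\log(1+x)|\leq 2|x|$ for $|x|\leq 1/2$, then integrated using uniformity in $p$, is precisely the intended elementary argument (the same device appears explicitly in the paper's Lemma on $\int_M\log g(t,p)\,d\mu$). The only small caveat is that for the $O$-classes with $r=0$ the quotient $\varepsilon/\alpha$ need not tend to zero, so one should read the statement, as in all of the paper's applications, with error terms that vanish as $t\to 0$ uniformly in $p$; under that reading your threshold on $|t|$ indeed depends only on $c$ and the uniform size of $\varepsilon$, and the integrated conclusion follows since $\nu$ is a probability measure.
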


We will use the notations introduced previously in the paper. Recall that from \eqref{eq:Lyapunov} we have
$$
\lambda(t)=\int_M\log\eta_td\mu=\int_M\log\tilde\eta_td\mu,
$$
where $f_{t*}V_t=\tilde\eta_tV_t$, $\tilde\eta_t=\eta_t\circ f_t^{-1}$. Since $\omega(V_t)=1$, applying $\omega$ we get
$$
\tilde\eta_t=\omega(f_{t*}V_t)=\omega(h_{t*}f_*V_t)=h_t^*\omega(f_*V_t)\circ h_t^{-1}.
$$

\begin{proof}[Proof of Theorem~\ref{teo:lyapunovH}]

The strategy of the proof is to approximate $\lambda(t)$ up to order $t^{\alpha+\beta}$ by a simple formula involving the action of the flow $\phi_t^X$ on $\omega$ (or $V$), and then to use Theorem~\ref{teo:flow} in order to obtain the regularity of this new expression.

{\bf Step 1: The following approximations hold:
\begin{equation}\label{eq:tetat}
\tilde\eta_t=\omega(h_{t*}f_*V_t)=\tilde\eta\circ h_t^{-1}\cdot \omega(\phi_{t*}^XV)+O(t^{\min\{\alpha+\beta,2\beta\}}),
\end{equation}
\begin{equation}\label{eq:etat}
\eta_t\circ f^{-1}=h_t^*\omega(f_*V_t)=\tilde\eta\cdot \phi_t^{X*}\omega(V)+O(t^{\min\{\alpha+\beta,2\alpha\}}),
\end{equation}
\begin{equation}\label{eq:lambdat}
\lambda(t)=\lambda(0)+\int_M\log\omega(\phi^X_{t*}V)d\mu+O(t^{\alpha+\beta})=\lambda(0)+\int_M\log\phi_t^{X*}\omega(V)d\mu+O(t^{\alpha+\beta}).
\end{equation}}

In fact, we have:
\begin{align*}
\tilde\eta_t&=\omega(h_{t*}f_*V_t)=\omega(h_{t*}f_*V)+\omega(h_{t*}f_*(V_t-V))\\
&=\tilde\eta\circ h_t^{-1}\cdot\omega(h_{t*}V)+\omega(h_{t*}f_*(V_t-V))\\
&=\tilde\eta\circ h_t^{-1}\cdot\omega(\phi^X_{t*}V)+\tilde\eta\circ h_t^{-1}\cdot\omega((h_{t*}-\phi^X_{t*})V)+\omega(h_{t*}f_*(V_t-V))\\
&:=\tilde\eta\circ h_t^{-1}\cdot\omega(\phi^X_{t*}V)+E_1(t,p)+E_2(t,p)
\end{align*}
We will evaluate $E_1$ and $E_2$ separately. Since $V$ is $C^{\beta}$ we can apply Lemma~\ref{le:htphit} and since $r\geq\alpha+\beta+1$ we get
$$
E_1=\tilde\eta\circ h_t^{-1}\cdot\omega((h_{t*}-\phi_{t*}^X)V)=\tilde\eta\circ h_t^{-1}\cdot\omega(O(t^{\min\{2\beta,r-1\}}))=O(t^{\min\{2\beta,\alpha+\beta\}}).
$$

Recall that $V_t-V$ is in the kernel of $\omega$, which is invariant by $f_*$, so we have $\omega(f_*(V_t-V))=0$. We obtain
\begin{eqnarray*}
E_2&=&\omega(h_{t*}f_*(V_t-V))=h_t^*\omega(f_*(V_t-V))\circ h_t^{-1}\\
&=&(h_t^*\omega-\omega)(f_*(V_t-V))\circ h_t^{-1}=O(t^{\alpha+\beta})
\end{eqnarray*}
since from Lemma~\ref{le:h} we know that $h_t^*\omega-\omega=O(t^{\alpha})$, and from Proposition~\ref{p:Vt} we have that $V_t-V=O(t^{\beta})$.

Putting the above estimates together we obtain the formula \eqref{eq:tetat}. The proof of \eqref{eq:etat} is similar:
\begin{align*}
\eta_t\circ f^{-1}&=h_t^*\omega(f_*V_t)=h_t^*\omega(f_*V)+h_t^*\omega(f_*(V_t-V))\\
&=\tilde\eta\cdot h_t^*\omega(V)+h_t^*\omega(f_*(V_t-V))\\
&=\tilde\eta\cdot \phi_t^{X*}\omega(V)+\tilde\eta\cdot (h_t^*-\phi_t^{X*})\omega(V)+h_t^*\omega(f_*(V_t-V))\\
&:=\tilde\eta\cdot \phi_t^{X*}\omega(V)+E_3(t,p)+E_4(t,p)
\end{align*}
Then applying again Lemma~\ref{le:htphit} we obtain
$$
E_3(t,p)=(h_t^*-\phi_t^{X*})\omega(f_*V)=O(t^{\min\{2\alpha,r-1\}})(V)=O(t^{\min\{2\alpha,\alpha+\beta\}}).
$$
Furthermore
$$
E_4(t,p)=h_t^*\omega(f_*(V_t-V))=(h_t^*\omega-\omega)(f_*(V_t-V))=O(t^{\alpha+\beta})
$$
is similar to the estimation of $E_2$, and the proof of \eqref{eq:etat} follows.

In order to obtain the approximation \eqref{eq:lambdat}, we use \eqref{eq:tetat} or \eqref{eq:etat}, depending whether $\beta\geq\alpha$ or not. For example if $\beta\geq\alpha$ we use \eqref{eq:tetat} and we get
\begin{eqnarray*}
\lambda(t)&=&\int_M\log\tilde\eta_t=\int_M\log\left[\tilde\eta\circ h_t^{-1}\cdot \omega(\phi_{t*}^XV)+O(t^{\alpha+\beta})\right]d\mu\\
&=&\int_M\log\omega(\phi^X_{t*}V)d\mu+\int_M\log(\tilde\eta\circ h_t^{-1}) d\mu+O(t^{\alpha+\beta})\\
&=&\int_M\log\omega(\phi^X_{t*}V)d\mu+\lambda(0)+O(t^{\alpha+\beta}).
\end{eqnarray*}
We used the fact that $\phi_{-t}^X$ preserves $\mu$, so $\int_M\log(\tilde\eta\circ\phi_{-t}^X) d\mu=\int_M\log\tilde\eta d\mu=\lambda(0)$.

If $\alpha>\beta$ we use \eqref{eq:etat} and we get
\begin{align*}
\lambda(t)&=\int_M\log\eta_t=\int_M\log\eta_t\circ f^{-1}=\int_M\log\left[\tilde\eta\cdot \phi_t^{X*}\omega(V)+O(t^{\alpha+\beta})\right]d\mu\\
&=\int_M\log\phi_t^{X*}\omega(V)d\mu+\int_M\log\tilde\eta d\mu+O(t^{\alpha+\beta})\\
&=\int_M\log\left[\omega(\phi^X_{t*}V)\circ\phi_t^X\right]d\mu+\lambda(0)+O(t^{\alpha+\beta})\\
&=\int_M\log\omega(\phi^X_{t*}V)d\mu+\lambda(0)+O(t^{\alpha+\beta}).
\end{align*}
We used again the fact that $f$ and $\phi_t^X$ preserve $\mu$.

{\bf Step 2: The map $t\mapsto \int_M\omega(\phi^X_{t*}V)d\mu$ is $C^{\alpha+\beta}$ (or is $C^{\alpha+\beta-1+Zygmund}$).}

This is an application of Theorem~\ref{teo:flow}, and it can be done in general for any $\alpha,\beta>0$, as long as $X$ is $C^r$ with $r\geq \alpha+\beta+1$.

Choose a finite open cover of $M$ with charts and a smooth partition of unity associated to it, $(U_i,\rho_i),\ 1\leq i\leq N$. Since
$$
\int_M\omega(\phi^X_{t*}V)d\mu=\sum_{i=1}^N\int_{M}\rho_i\omega(\phi^X_{t*}V)d\mu,
$$
it is sufficient to study the regularity of $t\mapsto\int_{U_i}\rho_i\omega(\phi^X_{t*}V)d\mu$. So we can assume that $\omega$ is $C^{\alpha}$ and supported in a small chart $U$.

We know that $D\phi^X_t(p)$ is $C^{r-1}$, with $r-1\geq\alpha+\beta$, so
\begin{equation}\label{eq:Dphit}
D\phi^X_t(p)=Id+tD_1(p)+\frac{t^2}2D_2(p)+\dots+\frac{t^a}{a!}D_a(p)+O(t^{\alpha+\beta}),
\end{equation}
where $a=[\alpha+\beta]$, $D_i:U_i\rightarrow \mathcal M_{d\times d}(\mathbb R)$ is $C^{r-i-1}$, for all $1\leq i\leq a$. Denote $D_0=Id$, and observe that $D_1=DX$.

We can assume that in the chart $U$ we have $V=V_1\wedge V_2\wedge\dots\wedge V_k$, where $V_1,V_2,\dots V_k$ are $C^{\beta}$ vector fields. Then
\begin{align*}
\phi_{t*}^XV\circ\phi_t^X&=(\phi_{t*}^XV_1\wedge\phi_{t*}^XV_2\wedge\dots\wedge\phi_{t*}^XV_k)\circ\phi_t^X=\\
&=D\phi_t^X(p)V_1(p)\wedge D\phi_t^X(p)V_2(p)\wedge\dots\wedge D\phi_t^X(p)V_k(p).
\end{align*}
Using the expansion \eqref{eq:Dphit} of $D\phi_t^X(p)$ we get an expansion
$$
\phi_{t*}^XV\circ\phi_t^X=V+t\mathcal D_1(V)+\frac t2\mathcal D_2(V)+\dots+\frac{t^a}{a!}\mathcal D_a(V)+O(t^{\alpha+\beta}),
$$
where each $\mathcal D_i(V)$ is an expression involving $D_1,D_2,\dots D_i$ and $V$, so it is of class $C^{\min\{r-i-1,\beta\}}$. In particular
$$
\mathcal D_1(V)=DX\cdot V_1\wedge V_2\wedge\dots\wedge V_k+V_1\wedge DX\cdot V_2\wedge\dots\wedge V_k+\dots+V_1\wedge V_2\wedge\dots\wedge DX\cdot V_k
$$
and $\|\mathcal D_1(V)\|\leq k\|X\|_{C^1}\|V\|_{C^0}$.

Then, for $t$ small, we have
$$
\int_U\omega(\phi_{t*}V)d\mu=\int_U\omega(\phi_{t*}^XV)\circ\phi_t^Xd\mu=\sum_{i=0}^a\frac{t^i}{i!}\int_U\omega(\phi_t^X(p))\mathcal D_i(V)(p)d\mu(p)+O(t^{\alpha+\beta})
$$

Now remember that $\omega$ is $C^{\alpha}$, while $\mathcal D_i(V)$ is $C^{\min\{r-i-1,\beta\}}$.
Furthermore, in the chart $U$ we have $\omega=\sum_{I\in\mathcal I}a_Idx_I$ and $\mathcal D_i(V)=\sum_{I\in\mathcal I}b_I\frac\partial{\partial x_I}$, with $\mathcal I$ being the set of multi-indices of size $k$, and all $a_I$ are of class $C^{\alpha}$ and $b_I$ of class $C^{\min\{r-i-1,\beta\}}$. Then by Theorem~\ref{teo:flow} we have that
$$
A_i(t):=\int_U\omega(\phi_t^X(p))\mathcal D_i(V)(p)d\mu(p)=\sum_{I\in\mathcal I}\int_Ua_I(\phi_t^X(p))b_I(p)d\mu(p)
$$
is of class $C^{\alpha+\min\{r-i-1,\beta\}}=C^{\min\{\alpha+\beta,r-i-1+\alpha\}}$ as a function of $t$ (or of class  $C^{\min\{\alpha+\beta-1,r-i-2+\alpha\}+Zygmund}$). If $\alpha+\beta\leq r-i-1+\alpha$ then $A_i(t)$ is $C^{\alpha+\beta}$ (or $C^{\alpha+\beta-1+Zygmund}$). Otherwise we have $t\mapsto t^iA_i(t)$ has expansion in $t=0$ of order $i+(r-i-1+\alpha) -\epsilon\geq2\alpha+\beta-\epsilon>\alpha+\beta$ for small enough $\epsilon>0$.

We conclude that for all $0\leq i\leq a$, we have that $t^iA_i$ has expansion of order $(\alpha+\beta)$ (or $\alpha+\beta-1+Zygmund$) in $t=0$, so also $t\mapsto\int_U\omega(\phi^X_{t*}V)d\mu$ has expansion of order $(\alpha+\beta)$ (or $\alpha+\beta-1+Zygmund$) in $t=0$.

Since in the above argument one can replace $V$ by $\phi_{s*}^XV$ for any $s$, we obtain that $t\mapsto\int_U\omega(\phi_{t*}^XV)d\mu$ is $C^{\alpha+\beta}$ (or is $C^{\alpha+\beta-1+Zygmund}$) for all $t$.

If in particular $\alpha,\beta\in(0,1)$, $\alpha+\beta>1$, $r\geq \alpha+\beta+1$, using the formula of $\mathcal D_1(V)$ we get
\begin{align*}
\int_U\omega(\phi_{t*}V)d\mu&=\int_U\omega(\phi_{-t}^X(p))V(p)d\mu(p)+t\int_U\omega(\phi_{-t}^X(p))\mathcal D_1(V)(p)d\mu(p)+O(t^{\alpha+\beta})\\
&=\int_U\omega(\phi_{-t}^X(p))V(p)d\mu(p)+t\int_U\omega(p)\mathcal D_1(V)(p)d\mu(p)+O(t^{\alpha+\beta}).
\end{align*}
Using the estimate on the derivative in Theorem~\ref{teo:flow} and putting the charts together we get
\begin{equation}\label{eq:boundderivative}
\left|\frac{\partial}{\partial t}\int_M\omega(\phi_{t*}^XV)d\mu\right|_{t=0}\leq C_{\alpha,\beta,M}\|X\|_{C^0}\|\omega\|_{C^{\alpha}}\|V\|_{C^{\beta}}+C_M\|X\|_{C^1}\|\omega\|_{C^0}\|V\|_{C^0}.
\end{equation}

{\bf Step 3: The map $t\mapsto \int_M\log\omega(\phi^X_{t*}V)d\mu$ is $C^{\alpha+\beta}$ (or Zygmund if $\alpha+\beta=1$) in $t=0$.}

Even if the map $t\mapsto\int_M\omega(\phi_{t*}^XV)d\mu=\int_M\phi_t^{X*}\omega(V)$ is $C^{\alpha+\beta}$ (or $C^{\alpha+\beta-1+Zygmund}$) for all $t$, the map $t\mapsto \int_M\log\omega(\phi^X_{t*}V)d\mu$ could be $C^{\alpha+\beta}$ (or Zygmund if $\alpha+\beta=1$) only in $t=0$. Let us remind first that $\phi_{t*}^XV$ is uniformly $C^{\beta}$ in $t$ and $\phi_t^{X*}\omega$ is uniformly $C^{\alpha}$ in $t$. Then either $\omega(\phi_{t*}^XV)$ or $\phi_t^{X*}\omega(V)$ will be uniformly $C^{\max\{\alpha,\beta\}}$ in $t$. Assume that $g(t,p):=\omega(\phi_{t*}^XV)$ is uniformly $C^{\max\{\alpha,\beta\}}$ in $t$, the other case can be treated similarly.

\begin{lema}\label{lem:derlog}
Given $g(t,p)$ uniformly $C^s$ in $t$, $s\in(0,1)$, with $g(0,p)=1$ for all $p\in M$, and $\int_Mg(t,p)d\mu$ of class $C^u$ in $t$, $u\leq 2s$, then $\int_M\log g(t,p)d\mu$ has expansion of order $u$ in $t$ at $t=0$.
\end{lema}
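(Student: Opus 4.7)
My plan is to expand the logarithm as a power series around $1$. Since $g(0,p)=1$ and $g$ is uniformly $C^s$ in $t$, we have $|g(t,p)-1|=O(|t|^s)$ uniformly in $p\in M$, so for sufficiently small $t$ one can write
$$
\log g(t,p)=(g(t,p)-1)-\tfrac12(g(t,p)-1)^2+R(t,p),
$$
where the remainder satisfies $|R(t,p)|\leq C|g(t,p)-1|^3$ with $C$ independent of $p$. Integrating against $\mu$ then yields
$$
\int_M\log g(t,p)\,d\mu=\left(\int_M g(t,p)\,d\mu-1\right)-\tfrac12\int_M(g(t,p)-1)^2\,d\mu+\int_M R(t,p)\,d\mu.
$$

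The first term on the right is $C^u$ in $t$ by hypothesis and vanishes at $t=0$, so it already furnishes the desired expansion of order $u$ around $t=0$. The quadratic integral is pointwise bounded by a multiple of $t^{2s}$, and the cubic remainder by a multiple of $t^{3s}$. Because $u\leq 2s\leq 3s$, both correction terms are $O(t^u)$ as $t\to 0$ and therefore get absorbed into the error of the expansion coming from the first term.

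The only point requiring some care will be the uniformity in $p$ of the Taylor expansion of $\log$ around $1$: I need $|g(t,p)-1|$ to stay uniformly bounded (say by $1/2$) for $p\in M$ and $t$ close to $0$, so that the standard estimate $|\log(1+x)-x+\tfrac12 x^2|\leq C|x|^3$ for $|x|\leq 1/2$ applies with a $p$-independent constant. This is immediate from the uniform $C^s$ estimate on $g$ together with $g(0,p)=1$. Beyond this, no real obstacle appears; the content of the lemma is that, because $g(0,\cdot)\equiv 1$, the nonlinearity of $\log$ only contributes terms of order $2s$ or higher, so the regularity of $t\mapsto \int_M\log g(t,p)\,d\mu$ at $t=0$ is limited purely by the prescribed $C^u$ regularity of $t\mapsto \int_M g(t,p)\,d\mu$.
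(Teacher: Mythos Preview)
Your proof is correct and follows essentially the same approach as the paper: expand $\log$ around $1$, use the uniform $C^s$ bound to control the remainder by $O(t^{2s})$, and absorb this into the $C^u$ expansion of $\int_M g(t,p)\,d\mu$ since $u\le 2s$. The only cosmetic difference is that the paper stops at first order, writing $\log A=(A-1)+O((A-1)^2)$ directly, whereas you carry the quadratic term explicitly and bound a cubic remainder; since both the quadratic and cubic contributions are already $O(t^{2s})$, your extra step is harmless but unnecessary.
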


\begin{proof}
We know that $\log A=A-1+O\left((A-1)^2\right)$. Then
\begin{eqnarray*}
\int_M\log g(t,p)d\mu&=&\int_Mg(t,p)-1+O\left((g(t,p)-1)^2\right)d\mu\\
&=&-1+\int_Mg(t,p)d\mu+O(t^{2s})\\
&=&-1+\int_Mg(t,p)d\mu+O(t^u)
\end{eqnarray*}
and the result follows. The result also works for $u=Zygmund$, and if $u>1$ then
$$
\left.\frac\partial{\partial t}\int_M\log g(t,p)d\mu\right|_{t=0}=\left.\frac\partial{\partial t}\int_Mg(t,p)d\mu\right|_{t=0}.
$$
\end{proof}

Applying the above lemma for $s=\max\{\alpha,\beta\}$ and $u=\alpha+\beta$ or $u=Zygmund$ is $\alpha+\beta=1$, we get that indeed $t\mapsto\int_M\log\omega(\phi_{t*}V)d\mu$ has an expansion of order $\alpha+\beta$ or Zygmund in $t=0$.

Now putting Step 1 and Step 3 together we obtain the desired regularity for $\lambda(t)$ in $t=0$.

Furthermore, if $\alpha+\beta>1$ then
\begin{equation}\label{eq:derlog}
\lambda'(0)=\left.\frac{\partial}{\partial t}\int_M\omega(\phi_{t*}^XV)d\mu\right|_{t=0}=\left.\frac{\partial}{\partial t}\int_M\phi_t^{X*}\omega(V)d\mu\right|_{t=0}=\left.\frac{\partial}{\partial t}\int_M\log[\omega(\phi_{t*}^XV)]d\mu\right|_{t=0}
\end{equation}
and
\begin{equation}\label{eq:boundderivative2}
|\lambda'(0)|\leq C_{\alpha,\beta,M}\|X\|_{C^0}\|\omega\|_{C^{\alpha}}\|V\|_{C^{\beta}}+C_M\|X\|_{C^1}\|\omega\|_{C^0}\|V\|_{C^0}.
\end{equation}
This finishes the proof of the theorem.
\end{proof}

\begin{rem}\label{rem:regularity}
Inspecting the proof one can see that in fact the condition $r\geq\max\{\alpha+\beta+1,2\}$ is sufficient for the proof above. In particular, if $\alpha+\beta>1$ and $r=2$ then the derivative $\lambda'(0)$ exists and satisfies the bound \ref{eq:boundderivative2}.
\end{rem}

Next we prove Theorem~\ref{cor:gflow}.

\begin{proof}[Proof of Theorem~\ref{cor:gflow}]

Let $f_t$ be a $C^2$ family of $C^2$ diffeomorphisms in $PH^{2,\frac 12+}_{\mu,1}(M)$, and let $TM=E_t^s\oplus E_t^c\oplus E_t^u$ be the corresponding splitting for every $f_t$. Let $X_t$ be the vector field tangent to the family at each $f_t$. We can assume that all the bundles $E_t^i$ are $C^{\alpha}$ for some $\alpha>\frac 12$, and by compactness we have that $\|E_t^i\|_{C^{\alpha}}$ for $i\in\{s,c,u\}$, and $\|X_t\|_{C^1}$ are uniformly bounded. By Theorem~\ref{teo:lyapunovH}, for every $i\in\{s,c,u\}$, the map $t\mapsto\lambda(f_t,E_t^i, \mu)$ is differentiable everywhere, and the derivative is uniformly bounded, so the map is Lipschitz.

On the other hand, if we apply the Pesin formula we obtain that the metric entropy of $f_t$ with respect to $\mu$ is the integral of the sum of the positive Lyapunov exponents of $f_t$ with respect to $\mu$. Since every $f_t$ is not in $\mathcal A$, the center exponent will have the same sign $\mu$-almost everywhere. If the center exponent is positive then 
$$
h_{\mu}(f_t)=\lambda(f_t,E_t^c,\mu)+\lambda(f_t,E_t^u,\mu)=-\lambda(f_t,E_t^s,\mu),
$$
and if the center exponent is negative then
$$
h_{\mu}(f_t)=\lambda(f_t,E_t^u,\mu).
$$
Then we have
$$
h_{\mu}(f_t)=\max\{\lambda(f_t,E_t^u,\mu),-\lambda(f_t,E_t^s,\mu)\}
$$
is the maximum of two Lipschitz functions, so it is also Lipschitz.

\end{proof}



\section{Formulas for the derivatives (Proof of Theorem~\ref{teo:lyapunov12})}\label{sec:demteoD}

In this section we will prove Theorem~\ref{teo:lyapunov12}. The computations are based on the estimates from Lemmas~\ref{le:h} and~\ref{lem:taylorhV}. The first two parts of the theorem could also be obtained from the formulas \eqref{eq:tetat} and \eqref{eq:etat}.

\begin{proof}
{\bf Part (i).} We have that $\omega$ is $C^1$, so from \eqref{eq:derivadahw} we know that $h_t^*\omega=\omega+t\LL_X\omega+o(t)$. We also have that $V_t-V=o(1)$ and is in the kernel of $\omega$. Then, using the facts that $f_*V=\tilde\eta V$ and $\omega(V)=1$, we get
\begin{align*}
\eta_t\circ f^{-1}&=h_t^*\omega(f_*V_t)=(\omega+t\LL_X\omega)[f_*V+f_*(V_t-V)] +o(t)\\
&=\omega(\tilde\eta V)+t\LL_X\omega(\tilde\eta V)+\omega(f_*(V_t-V))+t\LL_X\omega(f_*(V_t-V))]+o(t)\\
&=\tilde\eta(1+t\LL_X\omega(V))+o(t).
\end{align*}
We compose with $f$ and we take the logarithm, and using the fact that $\log(1+a)=a+o(a)$, we get that the following holds if $\omega$ is $C^1$:
\begin{equation}\label{eq:etatomega}
\log\eta_t=\log\eta+\log[1+t\LL_X\omega(V)\circ f]+o(t)=\log\eta+t\LL_X\omega(V)\circ f+o(t).
\end{equation}

Integrating with respect to $\mu$ we obtain
\begin{align*}
\lambda(t)&=\int_M\log\eta_td\mu=\int_M\log\eta+t\LL_X\omega(V)\circ f\,d\mu+o(t)\\
&=\lambda(0)+t\int_M\LL_X\omega(V)d\mu+o(t).
\end{align*}

This shows that $\lambda$ is differentiable in zero and
$$
\lambda'(0)=\int_M\LL_X\omega(V)d\mu.
$$

{\bf Part (ii).} Now assume that $V$ is $C^1$. By Proposition~\ref{p:Vt} we know that $V_t$ is differentiable in $t=0$, so $V_t=V+tV'+o(t)$, with $V'$ in the kernel of $\omega$. We also know from Lemma\ref{lem:taylorhV} that $h_{t*}V=V-t\LL_XV+o(t)$. Using that $f_*$ preserves the kernel of $\omega$, $h_{t*}-Id=o(1)$, we get
\begin{align*}
\tilde\eta_t&=\omega(h_{t*}f_*V_t)=\omega(h_{t*}f_*V)+t\omega(h_{t*}f_*V')]+o(t)\\
&=\omega(h_{t*}\tilde\eta V)+t\omega(h_{t*}f_*V')+o(t)\\
&=\tilde\eta\circ h_t^{-1}[\omega(V)-t\omega(\LL_XV)]+t\omega(f_*V')+t\omega(h_{t*}f_*V'-f_*V')+o(t)\\
&=\tilde\eta\circ h_t^{-1}[1-t\omega(\LL_XV)]+o(t).
\end{align*}
Composing with $f_t=h_t\circ f$ and using that $\omega(\LL_XV)\circ f_t=\omega(\LL_XV)\circ f+o(1)$ we get
$$
\eta_t=\tilde\eta_t\circ f_t=\eta[1-t\omega(\LL_XV)\circ f_t]+o(t)=\eta[1-t\omega(\LL_XV)\circ f]+o(t).
$$
Taking the logarithm we get that if $V$ is $C^1$ then the following holds:
\begin{equation}\label{eq:etatV}
\log\eta_t=\log\eta+\log[1-t\omega(\LL_XV)\circ f]+o(t)=\log\eta-t\omega(\LL_XV)\circ f+o(t).
\end{equation}

Integrating with respect to $\mu$ we obtain
\begin{align*}
\lambda(t)&=\int_M\log\eta_td\mu=\int_M\log\eta-t\omega(\LL_XV)\circ fd\mu+o(t)\\
&=\lambda(0)-t\int_M\omega(\LL_XV)d\mu+o(t).
\end{align*}

This shows again that $\lambda$ is differentiable in zero and
$$
\lambda'(0)=-\int_M\omega(\LL_XV)d\mu.
$$

{\bf Part (iii).} Recall that $V_t=V+tV'+o(t)$ and $h_t^*\omega=\omega+t\LL_X\omega +o(t)$. Let $\tilde\omega_t=h_t^*\omega-\omega-t\LL_X\omega=o(t)$.

We first evaluate $\eta_t\circ f^{-1}$. Using that $V_t-V$ is in the kernel of $\omega$ which is preserved by $f_*$, $V_t-V=tV'+o(t)$, and $\tilde\omega_t=o(t)$, we get
\begin{align*}
\eta_t\circ f^{-1}&=h_t^*\omega(f_*V_t)=h_t^*\omega(f_*V)+h_t^*\omega[f_*(V_t-V)]\\
&=h_t^*\omega(\tilde\eta V)+\omega[f_*(V_t-V)]+t\LL_X\omega[tV'+o(t)]+\tilde\omega_t[tV'+o(t)]\\
&=\tilde\eta h_t^*\omega(V)+t^2\LL_X\omega(f_*V')+o(t^2).
\end{align*}

Now we compose with $f$ and we apply the logarithm. Invoking Lemma~\ref{le:lemalog} and recalling the relation $\log(1+s)=s-\frac{1}{2}s^2 +o(s^2)$ we obtain

\begin{align*}
\log\eta_t&=\log\left[\eta\left(h_t^*\omega(V)+\frac{t^2}{\tilde\eta}\LL_X\omega(f_*V')\right)\circ f+o(t^2)\right]\\
&=\log\eta+\log\left[1+t\LL_X\omega(V)+\tilde{\omega}_t(V)+\frac{t^2}{\tilde\eta}\LL_X\omega(f_*V')\right]\circ f+o(t^2)
\end{align*}
so
\begin{equation}\label{eq:forsec3407}
\log\eta_t=\log\eta+\left[t\LL_X\omega(V)+\tilde{\omega}_t(V)+\frac{t^2}{\tilde\eta}\LL_X\omega(f_*V')-\frac{t^2}2[\LL_X\omega(V)]^2\right]\circ f+o(t^2)
\end{equation}
We used the fact that
$$
[t\LL_X\omega(V)+\tilde{\omega}_t(V)+\frac{t^2}{\tilde\eta}\LL_X\omega(f_*V')]^2=t^2[\LL_X\omega(V)]^2+o(t^2).
$$

Integrating with respect to $\mu$ and using the fact that $f$ preserves $\mu$ we get the following estimation on $\lambda_t$:
\begin{eqnarray}
\lambda(t)&=&\lambda(0)+t\int_M \LL_X\omega(V)d\mu+ \nonumber\\
& &+\frac{t^2}2\int_M \left(\frac{2}{\tilde\eta}\LL_X\omega(f_*V')-[\LL_X\omega(V)]^2\right) d\mu +\int_M  \tilde{\omega}_t(V)d\mu+o(t^2). \label{eq:forsec3404}
\end{eqnarray}

We are left with the estimation of $\int_M \tilde{\omega}_t(V)d\mu$. We will do this using approximations with $C^2$ forms.

Remember that since $X$ preserves the measure $\mu$, then for any $C^1$ function $g:M\rightarrow\mathbb R$ we have $\int_M\LL_Xgd\mu=0$.

\begin{lema}\label{le:3403} The following estimation holds:

\begin{equation}\label{eq:forsec3408}
\int_M \tilde{\omega}_t(V)d\mu=\frac{t^2}2\int_M -\LL_X\omega(\LL_XV)+\LL_Y\omega(V) \,d\mu+o(t^2)
\end{equation}

\end{lema}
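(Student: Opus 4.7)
The core idea is to introduce $F(t) := \int_M h_t^*\omega(V)\,d\mu$, so that
$$\int_M \tilde\omega_t(V)\,d\mu = F(t) - F(0) - tF'(0),$$
and to obtain a second-order Taylor expansion of $F$ at $0$ using only first-order Lie derivatives of $\omega$ and $V$. The invariance of $\mu$ under $\phi^{X_t}$ for each $t$ and under $\phi^Y$ (Lemma~\ref{lem:XYpresmu}) plays a crucial role: it permits integration by parts without ever differentiating $\omega$ or $V$ twice.

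Since $\omega$ is $C^1$, Lemma~\ref{le:h}(ii) together with Remark~\ref{rem:derivativet} shows that $t\mapsto h_t^*\omega$ is Frechet differentiable into continuous $k$-forms with $\partial_t h_t^*\omega = \LL_{X_t}(h_t^*\omega)$. Differentiating under the integral sign, $F'(t) = \int_M \LL_{X_t}(h_t^*\omega)(V)\,d\mu$. Applying the Leibniz rule
$$\LL_{X_t}\bigl(h_t^*\omega(V)\bigr) = \LL_{X_t}(h_t^*\omega)(V) + h_t^*\omega(\LL_{X_t}V)$$
and using that $\phi^{X_t}$ preserves $\mu$, so $\int_M \LL_{X_t}(h_t^*\omega(V))\,d\mu = 0$, we obtain the key formula
$$F'(t) = -\int_M h_t^*\omega(\LL_{X_t}V)\,d\mu,$$
and in particular $F'(0) = -\int_M \omega(\LL_X V)\,d\mu = \int_M \LL_X\omega(V)\,d\mu$.

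Since $r\geq 3$, the vector field $X_t$ is $C^1$ in $t$ with $\partial_t X_t|_{t=0}=Y$, so $X_t = X + tY + o(t)$ in $C^1$. As $V$ is $C^1$, this propagates to $\LL_{X_t}V = \LL_X V + t\LL_Y V + o(t)$ in $C^0$, and combined with $h_t^*\omega = \omega + t\LL_X\omega + o(t)$ in $C^0$ (Lemma~\ref{le:h}(ii)), integrating against $\mu$ yields
$$F'(t) = -\int_M\omega(\LL_X V)\,d\mu - t\int_M\bigl[\omega(\LL_Y V) + \LL_X\omega(\LL_X V)\bigr]\,d\mu + o(t).$$
Integrating this first-order expansion from $0$ to $t$ (which automatically upgrades the $o(t)$ remainder to $o(t^2)$) gives
$$\int_M\tilde\omega_t(V)\,d\mu = -\frac{t^2}{2}\int_M\bigl[\omega(\LL_Y V) + \LL_X\omega(\LL_X V)\bigr]\,d\mu + o(t^2).$$
A last integration by parts against $\phi^Y$, namely $\int_M \LL_Y(\omega(V))\,d\mu = 0$ combined with $\LL_Y(\omega(V)) = \LL_Y\omega(V) + \omega(\LL_Y V)$, converts $-\int_M\omega(\LL_Y V)\,d\mu$ into $\int_M \LL_Y\omega(V)\,d\mu$ and produces the claimed identity.

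The only delicate point is verifying that a $C^1$ expansion of $F'$ at $0$ really yields a second-order expansion of $F$; but this is automatic from integrating the remainder, and it is precisely this observation that enables the argument to succeed with merely $C^1$ regularity on $\omega$ and $V$. All second-order information in $t$ is carried by the vector field $Y$ (acting on \emph{first} derivatives of $\omega$ and $V$) together with the $\mu$-invariance of $\phi^Y$, never by second derivatives of the geometric data themselves.
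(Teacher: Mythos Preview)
Your proof is correct and takes a genuinely different route from the paper's.

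The paper argues by approximation: it replaces $\omega$ by a sequence of $C^2$ forms $\omega_n$, computes the second derivative $B_n''(t)$ directly via Lemma~\ref{le:h}(iii), then performs the integration by parts $\int_M\LL_{X_t}\LL_{X_t}(h_t^*\omega_n)(V)\,d\mu = -\int_M\LL_{X_t}(h_t^*\omega_n)(\LL_{X_t}V)\,d\mu$ at the level of the second derivative, and finally passes to the limit using uniform convergence of $B_n''$. You instead perform the integration by parts one level earlier, on $F'(t)$ itself, obtaining $F'(t)=-\int_M h_t^*\omega(\LL_{X_t}V)\,d\mu$; this expression involves only first Lie derivatives of $\omega$ and $V$, so its first-order Taylor expansion in $t$ is available directly from the $C^1$ regularity of $\omega$, $V$ and the expansion $X_t=X+tY+o(t)$ in $C^1$. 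Integrating that expansion of $F'$ then yields the second-order expansion of $F$ with no approximation step. The two arguments use the same Leibniz rule and the same $\mu$-invariance of $\phi^{X_t}$ and $\phi^Y$; the difference is purely in the order of operations. Your route is shorter and makes more transparent why only $C^1$ data on $\omega$ and $V$ are needed, at the cost of requiring the (routine) observation that integrating an $o(t)$ remainder produces $o(t^2)$.
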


\begin{proof}

We have to show basically that the function $A(t):=\int_M \tilde{\omega}_t(V)d\mu$ is twice differentiable in $t=0$, $A(0)=A'(0)=0$, and the second derivative is
$$
A''(0)=\int_M -\LL_X\omega(\LL_XV)+\LL_Y\omega(V) d\mu.
$$

Since $\tilde\omega_t=h_t^*\omega-\omega-t\LL_X\omega$, Lemma~\ref{le:h} tells us that $A$ is $C^1$, and also $A(0)=A'(0)=0$, because $\tilde\omega_0=0$ and $\left.\frac{\partial}{\partial t}\tilde\omega_t\right|_{t=0}=0$.

Also we have that $t\mapsto \omega+t\LL_X\omega$ is $C^{\infty}$ in $t$, and the second derivative vanishes, so it is enough to show that the map
$$
B(t):=A(t)+\int_M\left(\omega+t\LL_X\omega\right)(V)d\mu=\int_Mh_t^*\omega(V)d\mu
$$
is twice differentiable and 
$$
B''(0)=\int_M -\LL_X\omega(\LL_XV)+\LL_Y\omega(V) d\mu.
$$

Consider a sequence of $C^2$ forms $\omega_n$, $n\geq 1$, that converges to $\omega$ in the $C^1$ topology. The $B_n(t):=\int_Mh_t^*\omega_n(V)d\mu$ clearly converges uniformly to $B(t)$.

From Lemma~\ref{le:h}, \eqref{eq:derivadahw} and Remark~\ref{rem:derivativet}, we have that $h_t^*\omega_n$ is differentiable with respect to $t$, and
$$
\frac{\partial}{\partial t}h_t^*\omega_n=\LL_{X_t}\left(h_t^*\omega_n\right),
$$
which converges uniformly to 
$$
\frac{\partial}{\partial t}h_t^*\omega=\LL_{X_t}\left(h_t^*\omega\right).
$$
This implies that $B_n'$ converges uniformly to $B'$.

Also from Lemma~\ref{le:h}, \eqref{eq:derivadahw2} and Remark~\ref{rem:derivativet}, we have that $h_t^*\omega_n$ is twice differentiable with respect to $t$, and
$$
\frac{\partial^2}{\partial t^2}h_t^*\omega_n=\LL_{X_t}\LL_{X_t}\left(h_t^*\omega_n\right)+\LL_{Y_t}\left(h_t^*\omega_n\right).
$$
Then
$$
B_n''(t)=\int_M\LL_{X_t}\LL_{X_t}\left(h_t^*\omega_n\right)(V)+\LL_{Y_t}\left(h_t^*\omega_n\right)(V)d\mu.
$$
Since we have that
$$
\LL_{X_t}\LL_{X_t}\left(h_t^*\omega_n\right)(V)=\LL_{X_t}\left[\LL_{X_t}\left(h_t^*\omega_n\right)(V)\right]-\LL_{X_t}\left(h_t^*\omega_n\right)(\LL_{X_t}V),
$$
and $X_t$ preserves $\mu$, we get that
\begin{equation}\label{eq:omegan}
B_n''(t)=\int_M-\LL_{X_t}\left(h_t^*\omega_n\right)(\LL_{X_t}V)+\LL_{Y_t}\left(h_t^*\omega_n\right)(V)d\mu.
\end{equation}

Since $\omega_n$ converges to $\omega$ in the $C^1$ topology, the right hand side of \eqref{eq:omegan} converges  uniformly to
$$
\int_M-\LL_{X_t}\left(h_t^*\omega\right)(\LL_{X_t}V)+\LL_{Y_t}\left(h_t^*\omega\right)(V)d\mu,
$$
so $B$ must be also twice differentiable with the second derivative given above. Replacing $t=0$ we obtain the claim.

\end{proof}

The proof of the part (iii) of the theorem follows now from Lemma~\ref{le:3403} and \eqref{eq:forsec3404}.

\end{proof}


\section{The case of variable measure (Proof of Theorem~\ref{teo:response12})}\label{sec:varmeasure}

In this section we will treat the case when the invariant measure $\mu_t$ depends on the map $f_t$.

\begin{proof}[Proof of Theorem~\ref{teo:response12}]

{\bf Part (i).} 
Since $\omega$ is $C^1$, the relation \eqref{eq:etatomega} holds:
$$
\log\eta_t=\log\eta+t\LL_X\omega(V)\circ f+o(t).
$$
Integrating with respect to $\mu_t$ we have,
\begin{eqnarray*}
\lambda(t)&=&\int_M\log\eta_t\,d\mu_t=\int_M\log\eta \,d\mu_t+t\int_M\LL_X\omega(V)\circ f\,d\mu_t+o(t)\\
&=&\int_M\log\eta \,d\mu_t+t\int_M \LL_X\omega(V)\circ f\,d\mu+o(t)\\
&=&\int_M\log\eta \,d\mu_t+t\int_M \LL_X\omega(V)\,d\mu+o(t).
\end{eqnarray*}
We used the facts that $f$ preserves $\mu$, and for a continuous map $g:M\rightarrow \mathbb R$ we have $\int_Mg\,d\mu_t=\int_Mgd\mu+o(1)$.

The formula above shows that $\lambda$ is differentiable in 0 if and only if the family $\mu_t$ has linear response $\mathcal R(\log\eta)$ for the function $\log\eta:M\rightarrow\mathbb R$,
and in this case we obtain
$$
\lambda'(0)=\mathcal R(\log\eta)+\int_M \LL_X\omega(V)d\mu.
$$

{\bf Part (ii).}
Since now $V$ is $C^1$, the relation \eqref{eq:etatV} holds:
$$
\log\eta_t=\log\eta-t\omega(\LL_XV)\circ f+o(t).
$$
Again, integrating with respect to $\mu_t$ we obtain
\begin{eqnarray*}
\lambda(t)&=&\int_M\log\eta_t\,d\mu_t=\int_M\log\eta \,d\mu_t-t\int_M \omega(\LL_XV)\circ f\,d\mu_t+o(t)\\
&=&\int_M\log\eta \,d\mu_t-t\int_M \omega(\LL_XV)\circ f\,d\mu+o(t)\\
&=&\int_M\log\eta \,d\mu_t-t\int_M\omega(\LL_XV)\,d\mu+o(t).
\end{eqnarray*}

The formula above shows again that $\lambda$ is differentiable in 0 if and only if the family $\mu_t$ has linear response $\mathcal R(\log\eta)$ for the function $\log\eta:M\rightarrow\mathbb R$,
and in this case
$$
\lambda'(0)=\mathcal R(\log\eta)-\int_M \omega(\LL_XV)d\mu.
$$

{\bf Part (iii).}
Since both $\omega$ and $V$ are $C^1$, then formula \eqref{eq:forsec3407} holds:
$$
\log\eta_t=\log\eta+\left[t\LL_X\omega(V)+\tilde{\omega}_t(V)+\frac{t^2}{\tilde\eta}\LL_X\omega(f_*V')-\frac{t^2}2[\LL_X\omega(V)]^2\right]\circ f+o(t^2).
$$
Since $\omega$ is in fact $C^2$ then by Lemma~\ref{le:h} we have
$$
\tilde\omega_t=\frac{t^2}2(\LL_X\LL_X\omega+\LL_Y\omega)+o(t^2).
$$
Combining the two relations above we get
\begin{align*}
\log\eta_t&=\log\eta+t\LL_X\omega(V)\circ f+\\
& +\frac{t^2}2\left[\LL_X\LL_X\omega(V)+\LL_Y\omega(V)-(\LL_X\omega(V))^2+\frac 2{\tilde\eta}\LL_X\omega(f_*V')\right]\circ f+o(t^2).
\end{align*}

Integrating with respect to $\mu_t$, we have
\begin{eqnarray*}
\lambda(t)&=&\int_M\log\eta \,d\mu_t+t\int_M \LL_X\omega(V)\circ f\,d\mu_t+\\
& &+\frac{t^2}2\int_M\left[\LL_X\LL_X\omega(V)+\LL_Y\omega(V)-(\LL_X\omega(V))^2+\frac 2{\tilde\eta}\LL_X\omega(f_*V')\right]\circ f\,d\mu_t+o(t^2)\\
&=&\lambda(0)+t\int_M \LL_X\omega(V)d\mu+t^2\mathcal R(\LL_X\omega(V)\circ f)\,d\mu+\\
& &+\frac{t^2}2\int_M\left[\LL_X\LL_X\omega(V)+\LL_Y\omega(V)-(\LL_X\omega(V))^2+\frac 2{\tilde\eta}\LL_X\omega(f_*V')\right]\,d\mu+o(t^2)
\end{eqnarray*}
We used that $\log\eta=\lambda(0)$ is constant, $\mu_t$ has linear response for $\LL_X\omega(V)\circ f$, and $\mu_t=\mu+o(1)$ in the weak* topology. This finishes the proof of the theorem.

\end{proof}

Next we will prove Theorem~\ref{teo:responsegeo}. The strategy is to apply Theorem~\ref{teo:response12} for the family $f_t$ and two families of corresponding invariant measures: measures of maximal entropy and Gibbs u-states. Since the Liouville measure for $f$ is both the unique measure of maximal entropy and the unique Gibbs u-state, the hypothesis of Theorem~\ref{teo:response12} will be satisfied, and we obtain differentiability of the corresponding stable and unstable Lyapunov exponents. On the other hand, the topological entropy will be bounded above and below by the two Lyapunov exponents, so the conclusion follows. Part of the argument is the fact that the derivative of the center exponents vanishes.

\begin{proof}[Proof of Theorem~\ref{teo:responsegeo}]

Assume that $f$ is the time-one map of the geodesic flow on the unit tangent bundle of a manifold with constant negative curvature, denoted $M$. Let $\mu$ be the Liouville measure on $M$, this means that $\mu$ is invariant under $f$, and it is the unique measure of maximal entropy, and the unique Gibbs u-state for $f$.

There exists a $C^1$ neighborhood $\mathcal U$ of $f$ such that all the diffeomorphisms $g\in\mathcal U$ are partially hyperbolic with one-dimensional center, and are $\epsilon$-entropy expansive, for the same $\epsilon>0$ (see for example \cite{LVY2013}). In particular the entropy function is upper-semicontinuous, and there exist measures of maximal entropy.

Assume now that the smooth family $f_t$ is in $\mathcal U$, so each $f_t$ has a measure of maximal entropy $\mu_t$, which we can choose to be ergodic (see \cite{W1982} for example). If, for some sequence $t_n\rightarrow 0$, we have
$$
\mu_0=\lim_{n\rightarrow\infty}\mu_{t_n}
$$
in the weak* topology, then $\mu_0$ must be an invariant measure for $f_0=f$. 

\begin{lema}
The entropy function is upper semicontinuous in both $f$ and $\mu$, in the sense that
$$
\limsup_{n\rightarrow\infty}h_{\mu_{t_n}}(f_{t_n})\leq h_{\mu_0}(f).
$$
\end{lema}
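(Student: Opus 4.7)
My plan is to exploit the uniform $\epsilon$-entropy expansiveness on $\mathcal{U}$ to reduce the computation of the metric entropy to a single finite partition, and then pass to the limit using the weak$^*$ convergence $\mu_{t_n}\to\mu_0$ together with $C^0$-convergence $f_{t_n}\to f$.

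First I would fix a finite measurable partition $\mathcal{P}=\{P_1,\dots,P_N\}$ of $M$ whose atoms have diameter strictly less than $\epsilon$ and whose boundary satisfies $\mu_0(\partial\mathcal{P})=0$; such a partition is standard to construct by slightly perturbing the faces of a fine enough partition, using that $\mu_0$ can charge at most countably many parallel faces. By Bowen's criterion for $\epsilon$-entropy expansive maps, applied with the uniform expansivity constant $\epsilon$ valid on all of $\mathcal{U}$, one has $h_\nu(g)=h_\nu(g,\mathcal{P})$ for every $g\in\mathcal{U}$ and every $g$-invariant Borel probability $\nu$. Hence, writing $\mathcal{P}^k_g:=\bigvee_{i=0}^{k-1}g^{-i}\mathcal{P}$,
$$
h_{\mu_{t_n}}(f_{t_n})\;=\;h_{\mu_{t_n}}(f_{t_n},\mathcal{P})\;\le\;\frac{1}{k}\,H_{\mu_{t_n}}\bigl(\mathcal{P}^k_{f_{t_n}}\bigr)\qquad\text{for every }k\ge 1.
$$

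The heart of the proof is then to show, for each fixed $k$, the joint convergence
$$
\lim_{n\to\infty}H_{\mu_{t_n}}\bigl(\mathcal{P}^k_{f_{t_n}}\bigr)\;=\;H_{\mu_0}\bigl(\mathcal{P}^k_{f}\bigr).
$$
For each fixed tuple $(j_0,\dots,j_{k-1})$, the atom $Q:=\bigcap_{i=0}^{k-1}f^{-i}(P_{j_i})$ has boundary contained in $\bigcup_{i=0}^{k-1}f^{-i}(\partial\mathcal{P})$, which by $f$-invariance of $\mu_0$ and $\mu_0(\partial\mathcal{P})=0$ has $\mu_0$-measure zero. Therefore every atom of $\mathcal{P}^k_f$ is a $\mu_0$-continuity set. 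Since $f_{t_n}\to f$ in $C^0$, the symmetric differences between the corresponding atoms $Q_n:=\bigcap_{i=0}^{k-1}f_{t_n}^{-i}(P_{j_i})$ and $Q$ are contained in arbitrarily small neighborhoods of $\bigcup_i f^{-i}(\partial\mathcal{P})$, and the portmanteau theorem combined with the weak$^*$ convergence $\mu_{t_n}\to\mu_0$ yields $\mu_{t_n}(Q_n)\to\mu_0(Q)$. Continuity of the function $x\mapsto -x\log x$ on $[0,1]$ (it is in fact bounded and uniformly continuous) then gives the claimed convergence of the Shannon entropies.

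Finally, taking $\limsup_{n\to\infty}$ in the displayed inequality above gives
$$
\limsup_{n\to\infty}h_{\mu_{t_n}}(f_{t_n})\;\le\;\frac{1}{k}\,H_{\mu_0}\bigl(\mathcal{P}^k_{f}\bigr)\qquad\text{for every }k\ge 1,
$$
and letting $k\to\infty$ yields $\limsup_n h_{\mu_{t_n}}(f_{t_n})\le h_{\mu_0}(f,\mathcal{P})=h_{\mu_0}(f)$, as desired. The main technical obstacle is the atom-convergence step: one must simultaneously control two sources of error (perturbing the map alters the atoms of $\mathcal{P}^k_{f_{t_n}}$, while perturbing the measure alters how they are weighed), and this is precisely where the choice $\mu_0(\partial\mathcal{P})=0$ combined with the $f$-invariance of $\mu_0$ is indispensable.
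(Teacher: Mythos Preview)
Your proof is correct and follows essentially the same approach as the paper: both exploit the uniform $\epsilon$-entropy expansiveness to reduce to a single partition with $\mu_0$-null boundary, prove convergence of the Shannon entropies $H_{\mu_{t_n}}(\mathcal{P}^k_{f_{t_n}})\to H_{\mu_0}(\mathcal{P}^k_f)$ by combining $C^0$-convergence of the maps with weak$^*$ convergence of the measures, and then use that $h\le \tfrac{1}{k}H(\mathcal{P}^k)$. The paper makes the atom-convergence step slightly more explicit by sandwiching each atom $A$ between a closed $C$ and an open $U$ with $\mu_0(\partial C)=\mu_0(\partial U)=0$ and $\mu_0(U\setminus C)$ small; your portmanteau argument is the same mechanism in compressed form.
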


\begin{proof}

This follows basically from Bowen (see \cite{B1972}). Since all the maps $f_{t_n}$ are $\epsilon$-entropy expansive for the same $\epsilon>0$, then for any partition $\mathcal A$ with size smaller that $\epsilon$, we have that
\begin{equation}\label{eq:usc1}
h_{\mu_{t_n}}(f_{t_n})=h_{\mu_{t_n}}(f_{t_n},\mathcal A)=\lim_{k\rightarrow\infty}H_{\mu_{t_n}}\left(\mathcal A\vee f_{t_n}^{-1}\mathcal A\vee\dots\vee f_{t_n}^{-k+1}\mathcal A\right).
\end{equation}

Let $\mathcal A$ be such that the boundaries of the elements have zero measure with respect to $\mu_0$. Since the right hand side limit is decreasing, then given $\delta>0$, there exists $k_0$ such that
\begin{equation}\label{eq:usc2}
H_{\mu_0}\left(\vee_{i=0}^{k_0}f^{-i}\mathcal A\right)<h_{\mu_0}(f)+\delta.
\end{equation}

For every element $A\in\vee_{i=0}^{k_0}f^{-i}\mathcal A$, we can choose $C=\overline C\subset\hbox{in}t(A)\subset A\subset\overline A\subset U=\hbox{int}(U)$, with $\mu_0(U\setminus C)>\delta'$, and $\mu_0(\partial C)=\mu_0(\partial U)=0$. This implies that $\lim_{n\rightarrow\infty}\mu_{t_n}(C)=\mu_0(C)$ and $\lim_{n\rightarrow\infty}\mu_{t_n}(U)=\mu_0(U)$. For large enough $n$, the element $A_n$ from the partition $\vee_{i=0}^{k_0}f_{t_n}^{-i}\mathcal A$ corresponding to $A$ will satisfy $C\subset A_n\subset U$ (this is because $f_{t_n}$ converges to $f$), so we get that
$$
\left|\mu_{t_n}(A_n)-\mu_0(A)\right|<\delta'.
$$
This will hold for all the elements of the partition and for all large enough $n$, so by taking $\delta'$ sufficiently small we obtain that there exists some $n_{\delta}>0$ such that, for any $n>n_{\delta}$ we have
\begin{equation}\label{eq:usc3}
\left|H_{\mu_{t_n}}\left(\vee_{i=0}^{k_0}f_{t_n}^{-i}\mathcal A\right)-H_{\mu_0}\left(\vee_{i=0}^{k_0}f^{-i}\mathcal A\right)\right|<\delta.
\end{equation}

Now putting together \eqref{eq:usc1}, \eqref{eq:usc2} and \eqref{eq:usc3}, and remembering that the sequence which gives the entropy is decreasing, we get that
$$
h_{\mu_{t_n}}(f_{t_n})<h_{\mu_0}(f)+2\delta
$$
for any $n>n_{\delta}$. This finishes the proof of the upper semicontinuity.

\end{proof}

Let us suppose by contradiction that $\mu_0$ is different from the Liouville measure $\mu$, then $h_{\mu_0}(f)<h_{\rm top}(f)$, because $\mu$ is the unique measure of maximal entropy for $f$. This implies that
$$
\limsup_{n\rightarrow\infty}h_{\rm top}(f_{t_n})=\limsup_{n\rightarrow\infty}h_{\mu_{t_n}}(f_{t_n})\leq h_{\mu_0}(f)< h_{\rm top}(f),
$$
so $h_{\rm top}$ is not continuous at $f$, and this contradicts the continuity of the topological entropy obtained in \cite{HZ2014, SY2016}.

Then we have that $\mu_t$ converges in the weak* topology to $\mu$ when $t$ goes to $0$. We consider first the splitting $TM=E^{cs}\oplus E^u$, with $F:=E^{cs}$ and $E:=E^u$. We have that $\eta^u$, the expansion along $E^u$, is constant for $f$, since the curvature is constant, so $\mathcal R(\log\eta)=0$. All the sub-bundles $E^s$, $E^c$ and $E^u$ are smooth.

Then from Theorem~\ref{teo:response12} part ii, we obtain that $t\mapsto \lambda^u(f_t,\mu_t)$ is differentiable in $t=0$, and the derivative is
$$
\left.\frac{\partial}{\partial t}\lambda^u(f_t,\mu_t)\right|_{t=0}=-\int_M\omega_{E^{cs}}(\LL_XV_{E^u})d\mu.
$$

A similar argument gives the differentiability in $0$ of the center Lyapunov exponent, and furthermore:
\begin{eqnarray*}
\left.\frac{\partial}{\partial t}\lambda^c(f_t,\mu_t)\right|_{t=0}&=&-\int_M\omega_{E^{su}}(\LL_XV_{E^c})d\mu\\
&=&\int_M\omega_{E^{su}}(\LL_{V_{E^c}}X)d\mu\\
&=&\int_M\LL_{V_{E^c}}\left[\omega_{E^{su}}(X)\right]d\mu-\int_M\LL_{V_{E^c}}\omega_{E^{su}}(X)d\mu\\
&=&0.
\end{eqnarray*}

We used the fact that we can choose $V_{E^c}$ to be the vector field generating the geodesic flow, so on one hand it preserves the Liouville measure $\mu$, and $\int_M\LL_{V_{E^c}}gd\mu=0$ for any $C^1$ map $g:M\rightarrow\mathbb R$, and on another hand $V_{E^c}$ preserves the form $\omega_{E^{su}}$, so $\LL_{V_{E^c}}\omega_{E^{su}}=0$.

Combining both derivatives above we also get
$$
\left.\frac{\partial}{\partial t}\lambda^{cu}(f_t,\mu_t)\right|_{t=0}=\left.\frac{\partial}{\partial t}\lambda^c(f_t,\mu_t)\right|_{t=0}+\left.\frac{\partial}{\partial t}\lambda^u(f_t,\mu_t)\right|_{t=0}=-\int_M\omega_{E^{cs}}(\LL_XV_{E^u})d\mu.
$$

Since the maps $f_t$ are partially hyperbolic with one dimensional center, and each $\mu_t$ is ergodic, from the Ruelle formula we obtain
$$
h_{\rm top}(f_t)=h_{\mu_t}(f_t)\leq\max\{\lambda^u(f_t,\mu_t),\lambda^{cu}(f_t,\mu_t)\}=:A(t).
$$

Because both $\lambda^u(f_t,\mu_t)$ and $\lambda^{cu}(f_t,\mu_t)$ are differentiable in $t=0$ with the same derivative, we have that also $A(t)$ is differentiable in $t=0$ with the same derivative:
$$
A'(0)=-\int_M\omega_{E^{cs}}(\LL_XV_{E^u})d\mu.
$$

Also it is easy to see that
$$
h_{\rm top}(f)=\lambda^u(f,\mu)=\lambda^{cu}(f,\mu)=A(0).
$$

Thus we obtained that the topological entropy is bounded from above by the differentiable function $A(t)$. The next step is to obtain a bound from below by another differentiable function with the same derivative.

Since the maps $f_t$ are partially hyperbolic and smooth, there exist ergodic Gibbs u-states $m_t$ for each $f_t$ (see \cite{PS1982} or \cite{BDV2005}). This means that $m_t$ is an invariant probability measure for $f_t$, and the disintegrations of $m_t$ along the unstable foliations of $f_t$ are absolutely continuous with respect to the Lebesgue measure on the leaves. This in particular implies that $h_{m_t}(f_t)\geq\lambda^u(f_t,m_t)$ (see for example Ledrappier-Young \cite{LY1985}).

Since $f_t$ converges to $f$ in the $C^2$ topology, any weak limit of Gibbs u-states is a Gibbs u-state (see \cite{PS1982} for example), and since $\mu$ is the unique Gibbs u-state for $f$, we get that the measures $m_t$ converge in the weak* topology to $\mu$ when $t$ goes to $0$. Applying Theorem~\ref{teo:response12} again we obtain that $t\mapsto \lambda^u(f_t,m_t)$ is differentiable in $t=0$, and the derivative is
$$
\left.\frac{\partial}{\partial t}\lambda^u(f_t,m_t)\right|_{t=0}=-\int_M\omega_{E^{cs}}(\LL_XV_{E^u})d\mu.
$$

Using the variational principle we get
$$
h_{\rm top}(f_t)\geq h_{m_t}(f_t)\geq\lambda^u(f_t,m_t)=:B(t),
$$
where $B(t)$ is differentiable in $t=0$ and
$$
B'(0)=-\int_M\omega_{E^{cs}}(\LL_XV_{E^u})d\mu=A'(0).
$$

Evaluating in $t=0$ we obtain again
$$
h_{\rm top}(f)=\lambda^u(f,\mu)=\lambda^{cu}(f,\mu)=B(0).
$$

In conclusion, we have that
$$
B(t)\leq h_{\rm top}(f_t)\leq A(t),\ \ \ B(0)=h_{\rm top}(f_0)=A(0),
$$
and both $A$ and $B$ are differentiable in $0$ with the same derivative. This clearly implies that $t\mapsto h_{\rm top}(f_t)$ is differentiable in $t=0$ and it has the same derivative,
$$
\left.\frac{\partial}{\partial t}h_{\rm top}(f_t)\right|_{t=0}=-\int_M\omega_{E^{cs}}(\LL_XV_{E^u})d\mu.
$$

\end{proof}


\section{The case of flows}\label{sec:flows}

In this section we will prove Theorem~\ref{teo:flows}. We will apply Theorem~\ref{teo:lyapunovH} and Theorem~\ref{teo:lyapunov12} to the family of time-one maps $f_t=\phi_1^t$ of the flows generated by $X_t$. The first step is to find $\overline X$, the vector field tangent to the family $h_t=f_t\circ f^{-1}=\phi_1^t\circ\phi_{-1}$ in $t=0$.

\begin{lema}
Assume hypothesis \ref{HF}is satisfied, $r\geq 2$. Let $\phi$ be the flow $\phi^0$ generated by $X^0$. Then the following holds:
\begin{equation}\label{eq:Xbar}
\overline X=\left.\frac{\partial}{\partial t}\phi_1^t\circ\phi_{-1}\right|_{t=0}=\int_0^1\phi_{s*}X'ds.
\end{equation}
\end{lema}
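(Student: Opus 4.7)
The plan is to reduce the identity to a standard variational/Duhamel computation: differentiate the defining ODE of the flow $\phi^{X_t}$ with respect to the parameter $t$, solve the resulting linear inhomogeneous equation along the orbit of $\phi$ by variation of parameters, and then rewrite the result as a pushforward of $X'$ by the unperturbed flow.

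More concretely, I would fix $p \in M$ and set $q = \phi_{-1}(p)$. Consider the two-parameter map $\Phi(s,t) := \phi_s^{X_t}(q)$, which by definition of the flow satisfies $\partial_s \Phi(s,t) = X_t(\Phi(s,t))$ with $\Phi(0,t) = q$. Let $W(s) := \partial_t \Phi(s,0)$, so that $\overline X(p) = W(1)$ by the definition of $\overline X$. Differentiating the flow equation with respect to $t$ at $t=0$ and using $X_0 = X$ together with $\partial_t X_t|_{t=0} = X'$, we obtain the linear inhomogeneous ODE along the orbit $s \mapsto \phi_s(q)$:
\begin{equation*}
\frac{dW}{ds} = DX(\phi_s(q))\, W(s) + X'(\phi_s(q)), \qquad W(0) = 0.
\end{equation*}

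The homogeneous part $W' = DX(\phi_s(q))\, W$ is exactly the variational equation of $\phi$ along the orbit of $q$, so its fundamental matrix is $s \mapsto D\phi_s(q)$. Variation of parameters therefore gives
\begin{equation*}
W(s) = D\phi_s(q) \int_0^s [D\phi_\tau(q)]^{-1}\, X'(\phi_\tau(q))\, d\tau.
\end{equation*}

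The last step, and the only piece of real bookkeeping, is to rewrite $W(1)$ (with $q = \phi_{-1}(p)$) as an integral of pushforwards. Using the identity $[D\phi_\tau(q)]^{-1} = D\phi_{-\tau}(\phi_\tau(q))$, the group law $\phi_1 \circ \phi_{-\tau} = \phi_{1-\tau}$ (which, after differentiating at $\phi_{\tau-1}(p)$, yields $D\phi_1(\phi_{-1}(p)) \cdot D\phi_{-\tau}(\phi_{\tau-1}(p)) = D\phi_{1-\tau}(\phi_{\tau-1}(p))$), and the change of variable $\sigma = 1-\tau$, one arrives at
\begin{equation*}
\overline X(p) = W(1) = \int_0^1 D\phi_\sigma(\phi_{-\sigma}(p))\, X'(\phi_{-\sigma}(p))\, d\sigma = \int_0^1 (\phi_{\sigma *} X')(p)\, d\sigma,
\end{equation*}
which is the claimed formula. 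The hypothesis $r \geq 2$ ensures $X$ is $C^1$ and $X'$ is at least continuous, which is exactly what is needed to justify the variational equation and the variation-of-parameters argument; no genuine obstacle arises, the only point requiring attention being the chain-of-variables manipulation in the final line.
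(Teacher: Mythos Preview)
Your proof is correct and follows essentially the same route as the paper: both differentiate the flow ODE with respect to the parameter $t$ to obtain a linear inhomogeneous variational equation along the unperturbed orbit, solve it by variation of parameters using the fundamental solution $D\phi_s$, and then rewrite the result as an integral of pushforwards via a change of variable $\sigma = 1-\tau$. The only cosmetic difference is that the paper works directly with the base point $p$ and the fundamental solution $D\phi_{s-1}(p)$, while you first set $q=\phi_{-1}(p)$ and use $D\phi_s(q)$; these are equivalent.
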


\begin{proof}
Since it is enough to verify the formula locally in charts, we can assume that we are in $\mathbb R^n$. We have
$$
\frac{\partial}{\partial s}\phi_s^t(\phi_{-1}(p))=X^t(\phi_s^t(\phi_{-1}(p))).
$$
Furthermore
$$
\frac{\partial}{\partial t}\frac{\partial}{\partial s}\phi_s^t(\phi_{-1}(p))=\left(\frac{\partial}{\partial t}X^t\right)(\phi_s^t(\phi_{-1}(p)))+DX^t(\phi_s^t(\phi_{-1}(p)))\cdot\frac{\partial}{\partial t}\phi_s^t(\phi_{-1}(p))
$$
Evaluating in $t=0$ and using the notation
\begin{equation}\label{eq:As}
A(s):=\left.\frac{\partial}{\partial t}\phi_s^t(\phi_{-1}(p))\right|_{t=0}
\end{equation}
we obtain
\begin{equation}\label{eq:A's}
A'(s)=X'(\phi_{s-1}(p))+DX(\phi_{s-1}(p))A(s).
\end{equation}
For $s=0$ we have
\begin{equation}\label{eq:A0}
A(0)=\left.\frac{\partial}{\partial t}\phi_0^t(\phi_{-1}(p))\right|_{t=0}=\left.\frac{\partial}{\partial t}\phi_{-1}(p)\right|_{t=0}=0.
\end{equation}
We have to solve the differential equation \eqref{eq:A's} with the initial condition \eqref{eq:A0}. The homogeneous matrix equation
$$
\Phi'(s)=DX(\phi_{s-1}(p))\Phi(s)
$$
has the fundamental solution $\Phi_0(s)=D\phi_{s-1}(p)$ (this can be seen differentiating the formula $\frac{\partial}{\partial s}\phi_{s-1}(p)=X(\phi_{s-1}(p))$). Then the solution of our equation is
\begin{align*}
\overline X(p)&=A(1)=\Phi_0(1)\Phi_0^{-1}(0)A(0)+\Phi_0(1)\int_0^1\Phi_0^{-1}(s)X'(\phi_{s-1}(p))ds\\
&=0+Id\int_0^1D\phi_{s-1}(p)^{-1}X'(\phi_{s-1}(p))ds\\
&=\int_0^1D\phi_{1-s}(\phi_{s-1}(p))X'(\phi_{s-1}(p))ds=\int_0^1[\phi_{(1-s)*}X'](p)ds\\
&=\int_0^1[\phi_{s*}X'](p)ds.
\end{align*}
This proves the formula \eqref{eq:Xbar} pointwise. The formula also holds if we see the right hand side as a Bochner integral in the Banach space $\mathcal X^1(M)$ of $C^1$ vector fields (with the $C^1$ norm). This is because the map $s\mapsto \phi_{s*}X'$ is continuous in the $C^1$ topology on $\mathcal X^1(M)$ since $\phi_s$ is $C^2$ and $X'$ is $C^1$ (in particular $\overline X$ is $C^1$).

\end{proof}

\begin{proof}[Proof of Theorem~\ref{teo:flows}]

All the claims about the regularity of $\lambda$ follow from the previous results on families of diffeomorphisms, applied to the family $f_t=\phi^t_1$, which will satisfy the hypothesis \ref{H} for $r\geq 3$. We only have to check that the formulas \eqref{eq:derLyapHf}, \eqref{eq:derLyap1Ff} and \eqref{eq:derLyap1Ef} hold.

From \eqref{eq:derLyapH} we have
$$
\lambda'(0)=\left.\frac{\partial}{\partial t}\int_M(\phi_t^{\overline X})^*\omega(V)d\mu\right|_{t=0}=\left.\frac{\partial}{\partial t}\int_M\omega((\phi^{\overline X}_t)_*V)d\mu\right|_{t=0}.
$$
Let
$$
\mathcal D(X):=\left.\frac{\partial}{\partial t}\int_M(\phi_t^X)^*\omega(V)d\mu\right|_{t=0}=\left.\frac{\partial}{\partial t}\int_M\omega((\phi^X_t)_*V)d\mu\right|_{t=0}.
$$
Then $\mathcal D$ is well defined for every vector field
$$
X\in\mathcal X^1_{\mu}(M):=\{X\in\mathcal X^1(M):\ \phi^X_*\mu=\mu\},
$$
i.e. the $C^1$ vector fields which preserve $\mu$.

It is easy to see that $X\in\mathcal X^1_{\mu}(M)$ if and only if $X\in\mathcal X^1(M)$ and for every $C^1$ map $g:M\rightarrow \mathbb R$ we have $\int_Mdg(X)d\mu=0$ (see the proof of Lemma~\ref{lem:XYpresmu}). This in turn implies that $\mathcal X^1_{\mu}(M)$ si a closed linear subspace of $\mathcal X^1(M)$. The formula \eqref{eq:boundderivative} shows then that $\mathcal D:\mathcal X^1_{\mu}(M)\rightarrow\mathbb R$ is a bounded operator.

Although this does not follow directly from the definition, $\mathcal D$ is also a linear operator on $\mathcal X^1_{\mu}(M)$. Let $\beta'<\beta$ such that $\alpha+\beta'>1$, and let $V_n$ be a sequence of $C^{\infty}$ multivector fields converging to $V$ in the topology $C^{\beta'}$. Let
\begin{align*}
\mathcal D_n(X)&:=\left.\frac{\partial}{\partial t}\int_M\omega((\phi^X_t)_*V_n)d\mu\right|_{t=0}=\int_M\omega\left(\left.\frac{\partial}{\partial t}(\phi^X_{t*}V_n)\right|_{t=0}\right)d\mu\\
&=-\int_M\omega(\LL_XV_n)d\mu,
\end{align*}
so the operator $\mathcal D_n$ is linear in $X$.

On the other hand from \eqref{eq:boundderivative} we have
\begin{align*}
|\mathcal D_n(X)-\mathcal D(X)|&=\left|\left.\frac{\partial}{\partial t}\int_M\omega(\phi^X_{t*}(V_n-V))d\mu\right|_{t=0}\right|\\
&\leq C\|\omega\|_{C^{\alpha}}\|V_n-V\|_{C^{\beta'}}\|X\|_{C^1},
\end{align*}
so $\mathcal D_n$ converges to $\mathcal D$, which means that $\mathcal D$ will be also linear.

We have that $X_t\in\mathcal X^2_{\mu}(M)$ for all $t$, so $X'=\lim_{t\rightarrow 0}\frac{X_t-X}t\in\mathcal X^1_{\mu}(M)$. Also $\phi_{s*}X'\in\mathcal X^1_{\mu}(M)$ for all $s\in[0,1]$, because for any $C^1$ map $g:M\rightarrow\mathbb R$ we have
$$
\int_Mdg(\phi_{s*}X')d\mu=\int_M\phi_s^*dg(X')d\mu=\int_Md(g\circ\phi_s)(X')d\mu=0.
$$

Since a bounded linear operator commutes with the Bochner integral we get
\begin{equation}\label{eq:lambdaint}
\lambda'(0)=\mathcal D(\overline X)=\mathcal D\left(\int_0^1\phi_{s*}X'ds\right)=\int_0^1\mathcal D(\phi_{s*}X')ds.
\end{equation}

Now we have to compute $\mathcal D(\phi_{s*}X')$. Observe that $\phi_t^{\phi_{s*}X'}=\phi_s\circ\phi_t^{X'}\circ\phi_{-s}$. Let $\eta^s:M\rightarrow\mathbb R$ be such that $D\phi_s(p)V(p)=\eta^s(p)V(\phi_s(p))$. Then
$$
\phi_{-s*}V(p)=D\phi_{-s}(\phi_s(p))V(\phi_s(p))=[D\phi_s(p)]^{-1}V(\phi_s(p))=\frac 1{\eta^s(p)}V(p),
$$
or $\phi_{-s*}V=\frac 1{\eta^s}V$. Also
$$
\phi_s^*\omega(V)(p)=\omega(\phi_{s*}V(p))=\omega(D\phi_s(p)V(p))=\omega(\eta^s(p)V(\phi_s(p)))=\eta^s(p),
$$
so $\phi_s^*\omega=\eta^s\omega$. Then
$$
\phi_s^*\omega(\phi_{t*}^{X'}\circ\phi_{-s*}V)=\eta^s\omega\left(\phi_{t*}^{X'}\frac 1{\eta^s}V\right)=\frac{\eta^s}{\eta^s\circ\phi_{-t}^{X'}}\omega(\phi_{t*}^{X'}V)
$$
and
\begin{align*}
\omega(\phi^{\phi_{s*}X'}_{t*}V)&=\omega(\phi_{s*}\circ\phi_{t*}^{X'}\circ\phi_{-s*}V)=\left[\phi_s^*\omega(\phi_{t*}^{X'}\circ\phi_{-s*}V)\right]\circ\phi_{-s}\\
&=\left[\frac{\eta^s}{\eta^s\circ\phi_{-t}^{X'}}\right]\circ\phi_{-s}\cdot\left[\omega(\phi_{t*}^{X'}V)\right]\circ\phi_{-s}.
\end{align*}
If $\beta\geq\alpha$ then, using that $\phi$ and $\phi^{X'}$ preserve $\mu$, and applying twice Lemma~\ref{lem:derlog}, we obtain
\begin{align*}
\mathcal D(\phi_{s*}X')&=\left.\frac{\partial}{\partial t}\int_M\omega(\phi^{\phi_{s*}X'}_{t*}V)d\mu\right|_{t=0}=\left.\frac{\partial}{\partial t}\int_M\log\left[\omega(\phi^{\phi_{s*}X'}_{t*}V)\right]d\mu\right|_{t=0}\\
&=\frac{\partial}{\partial t}\left[\int_M\log(\eta^s\circ\phi_{-s})d\mu-\int_M\log(\eta^s\circ\phi_{-t}^{X'}\circ\phi_{-s})d\mu\right.+\\
&+\left.\left.\int_M\log\left[\left(\omega(\phi_{t*}^{X'}V))\right)\circ\phi_{-s}\right]d\mu\right]\right|_{t=0}\\
&=\left.\frac{\partial}{\partial t}\int_M\log\left[\omega(\phi_{t*}^{X'}V))\right]d\mu\right|_{t=0}=\mathcal D(X').
\end{align*}

If $\alpha>\beta$ then one obtains in a similar way that $\mathcal D(\phi_{s*}X')=\mathcal D(X')$ again, this time using the estimation
$$
\phi_t^{X'*}\omega(V)=\left[\omega(\phi^{\phi_{s*}X'}_{t*}V)\right]\circ\phi^{\phi_{s*}X'}_t=\frac{\eta^s\circ\phi_t^{X'}\circ\phi_{-s}}{\eta^s\circ\phi_{-s}}\left[\phi_t^{X'*}\omega(V)\right]\circ\phi_{-s}.
$$

Then from \eqref{eq:lambdaint} we have
$$
\lambda'(0)=\int_0^1\mathcal D(\phi_{s*}X')ds=\int_0^1\mathcal D(X')ds=\mathcal D(X'),
$$
and the formula \eqref{eq:derLyapHf} follows.

For the proof of the formula \eqref{eq:derLyap1Ff} let us assume that $\alpha=1$, then
$$
\lambda'(0)=\int_M\LL_{\overline X}\omega(V)d\mu.
$$
since $X\mapsto\LL_X\omega(V)$ is linear and bounded (in the $C^1$ topology), we can apply again the commutativity of the linear bounded operator with the Bochner integral and we obtain
\begin{equation}\label{eq:derFc1}
\lambda'(0)=\int_M\int_0^1\LL_{(\phi_{s*}X')}\omega(V)dsd\mu=\int_0^1\int_M\LL_{(\phi_{s*}X')}\omega(V)d\mu ds.
\end{equation}
(we changed the order of integration). We further have
\begin{align*}
\int_M\LL_{(\phi_{s*}X')}\omega(V)d\mu&=\int_M\left[\phi_{-s}^*\LL_{X'}(\phi_s^*\omega)\right](V)d\mu=\int_M\left[\phi_{-s}^*\LL_{X'}(\eta^s\omega)\right](V)d\mu\\
&=\int_M\left[\phi_{-s}^*\left(\LL_{X'}(\eta^s)\omega+\eta^s\LL_{X'}\omega\right)\right](V)d\mu\\
&=\int_M\left(\LL_{X'}\eta^s\right)\circ\phi_{-s}d\mu+\int_M\eta^s\LL_{X'}\omega(\phi_{-s*}V)d\mu\\
&=\int_M\LL_{X'}\eta^sd\mu+\int_M\eta^s\LL_{X'}\omega\left(\frac 1{\eta^s}V\right)d\mu\\
&=\int_M\LL_{X'}\omega(V)d\mu.
\end{align*}
Substituting in \eqref{eq:derFc1} we obtain \eqref{eq:derLyap1Ff}.

The proof of \eqref{eq:derLyap1Ef} is similar and we omit it here.

\end{proof}


\section{Non-vanishing of the second derivative}\label{sec:nonvanish}

In this section we will prove Theorem~\ref{teo:l2nonzero}. The idea of the proof is to make an explicit perturbation supported on a small neighborhood of a non-periodic point, mixing the directions of $E^2$ and $E^3$. In this small neighborhood we approximate the bundles by linear ones, and we apply the formula \eqref{eq:derLyap2}. For the linear part of the bundles, the first three terms in \eqref{eq:derLyap2} can be computed explicitly, and the error term is small. For the last term in \eqref{eq:derLyap2}, $\frac 2{\tilde\eta}\LL_X\omega(f_*V')$, we use the expansion \eqref{eq:cve1} of $V'$, and if the return time of the support of the perturbation is large, then only the first term is significant, and this can be again computed explicitly for the linear part.

\begin{proof}

We have that $TM=E^1\oplus E^2\oplus E^3$ is a dominated splitting of class $C^1$ for $f$ ($E^1$ can be trivial) and the measure $\mu$ is the volume on $M$. Let $p_0\in M$ be a non-periodic point for $f_0$ and consider a smooth chart $(U,\phi)$ from a neighborhood $U$ of $p_0$ to $B(0,1)\subset\mathbb R^d$ such that:
\begin{itemize}
\item $\phi(p_0)=0$;
\item $\phi_*E^3(0)=\hbox{span}\{\frac\partial{\partial x_1},\frac\partial{\partial x_2},\dots\frac\partial{\partial x_l}\}$;
\item $\phi_*E^2(0)=\hbox{span}\{\frac\partial{\partial x_{l+1}},\frac\partial{\partial x_{l+2}},\dots\frac\partial{\partial x_m}\}$;
\item $\phi_*E^1(0)=\hbox{span}\{\frac\partial{\partial x_{m+1}},\frac\partial{\partial x_{m+2}},\dots\frac\partial{\partial x_d}\}$;
\item $\phi_*\mu=Leb$.
\end{itemize}

We will work mostly in this chart, and we will make some abuse using the same notations for the objects in $M$ and their push forward in the chart. Let $H:B(0,1)\rightarrow\mathbb R$ be a $C^{\infty}$ function with compact support. Let $H_r(p)=r^2H(\frac pr)$ be the rescaling of $H$ to the ball $B_r:=B(0,r)$. Consider the $C^{\infty}$ vector field $X_r$ with support in $B_r$:
$$
X_r(p)=-\frac{\partial H_r}{\partial x_{l+1}}(p)\frac\partial{\partial x_1}+\frac{\partial H_r}{\partial x_1}(p)\frac\partial{\partial x_{l+1}}\ \ \ (\hbox{here}\ p=(x_1,x_2,\dots x_d)\in\mathbb R^d),
$$
or
$$
X_r(p)=-r\frac{\partial H}{\partial x_{l+1}}\left(\frac pr\right)\frac\partial{\partial x_1}+r\frac{\partial H}{\partial x_1}\left(\frac pr\right)\frac\partial{\partial x_{l+1}}
$$

We will consider the family $h_t^r=\phi^{X_r}_t$, i.e. the flow generated by $X_r$. The flow preserves the volume, in fact it preserves the $x_1x_{l+1}$-planes, and is Hamiltonian on each plane. Observe that we have $X_r=O(r)$ and $DX_r=O(1)$. 

We will show that for $r$ sufficiently small, the family $h_t^r$ satisfies the conclusion of the theorem. We will apply the formula \eqref{eq:derLyap2} from Theorem~\ref{teo:lyapunov12} to the Lyapunov exponents of the bundles $E^3$ and $E^2$.

{\bf Estimation of $\lambda_{r,E^3}''(0)$.} The family $h_t^r$ has $X=X_r$ and $Y=0$, and in the first case we consider $E=E^3$ and $F=E^1\oplus E^2$.

Let $\omega^3=\omega$ be the $C^1$ form corresponding to $F=E^1\oplus E^2$, and $V^3=V$ the $C^1$ multivector corresponding to $E=E^3$ such that $\omega_3(V_3)=1$. Eventually after a composition with a linear map, we can assume that (in the given chart) we have $\omega^3(0)=dx_1\wedge dx_2\wedge\dots\wedge dx_l$, and $V^3(0)=\frac\partial{\partial x_1}\wedge\frac\partial{\partial x_2}\wedge\dots\wedge\frac\partial{\partial x_l}$. Since $\omega^3$ and $V^3$ are $C^1$ we get
$$
\omega^3(p)=dx_1\wedge dx_2\wedge\dots\wedge dx_l+\alpha^3(p), \ \ \alpha^3=O(r),\ \ d\alpha^3=O(1),
$$
$$
V^3(p)=\frac\partial{\partial x_1}\wedge\frac\partial{\partial x_2}\wedge\dots\wedge\frac\partial{\partial x_l}+T^3(p),\ \ T^3=O(r),\ \ DT^3=O(1).
$$
Since $\omega^3,\ d\omega^3,\ V^3$ and $DV^3$ are all $O(1)$, we have that $\LL_{X_r}\omega^3$ and $\LL_{X_r}V^3$ are $O(1)$, while $\LL_{X_r}\alpha^3$ and $\LL_{X_r}T^3$ are $O(r)$.

Let us estimate the Lie derivative of $\omega^3$ with respect to $X_r$. We have:
$$
\LL_{X_r}dx_1=d(\LL_{X_r}x_1)=d\left[-r\frac{\partial H}{\partial x_{l+1}}\left(\frac pr\right)\right]=-\sum_{i=1}^n\frac{\partial^2 H}{\partial x_{l+1}\partial x_i}\left(\frac pr\right)dx_i,
$$
$$
\LL_{X_r}dx_{l+1}=d(\LL_{X_r}x_{l+1})=d\left[r\frac{\partial H}{\partial x_1}\left(\frac pr\right)\right]=\sum_{i=1}^n\frac{\partial^2 H}{\partial x_1\partial x_i}\left(\frac pr\right)dx_i,
$$
and $\LL_{X_r}dx_i=0$ for all the other $i$. Then
\begin{eqnarray*}
\LL_{X_r}\omega^3&=&\LL_{X_r}(dx_1\wedge\dots\wedge dx_l)+\LL_{X_r}\alpha^3=(\LL_{X_r}dx_1)\wedge dx_2\wedge\dots\wedge dx_l+O(r)\\
&=&-\left(\sum_{i=1}^n\frac{\partial^2 H}{\partial x_{l+1}\partial x_i}\left(\frac pr\right)dx_i\right)\wedge dx_2\wedge\dots\wedge dx_l+O(r)\\
&=&-\frac{\partial^2 H}{\partial x_1\partial x_{l+1}}\left(\frac pr\right)dx_1\wedge\dots\wedge dx_l-\frac{\partial^2 H}{\partial x_{l+1}^2}\left(\frac pr\right)dx_{l+1}\wedge dx_2\wedge\dots\wedge dx_l+\\
&&-\sum_{i=l+2}^n\frac{\partial^2 H}{\partial x_{l+1}\partial x_i}\left(\frac pr\right)dx_i\wedge dx_2\wedge\dots\wedge dx_l+O(r).
\end{eqnarray*}

Now let us estimate the Lie derivative of $V^3$. We also have:
$$
\LL_{X_r}\frac\partial{\partial x_i}=\left[X_r,\frac\partial{\partial x_i}\right]=\frac{\partial X_r}{\partial x_i}=-\frac{\partial^2H}{\partial x_{l+1}\partial x_i}\left(\frac pr\right)\frac\partial{\partial x_1}+\frac{\partial^2H}{\partial x_1\partial x_i}\left(\frac pr\right)\frac\partial{\partial x_{l+1}}.
$$
Then we have:
\begin{eqnarray*}
\LL_{X_r}V^3&=&\LL_{X_r}\left(\frac\partial{\partial x_1}\wedge\dots\wedge\frac\partial{\partial x_l}+T^3\right)\\
&=&\sum_{i=1}^l\frac\partial{\partial x_1}\wedge\dots\wedge \LL_{X_r}\frac\partial{\partial x_i}\wedge\dots\wedge\frac\partial{\partial x_l}+O(r)\\
&=&-\frac{\partial^2H}{\partial x_1\partial x_{l+1}}\left(\frac pr\right)\frac\partial{\partial x_1}\wedge\frac\partial{\partial x_2}\wedge\dots\wedge\frac\partial{\partial x_l}+\\
&&+\sum_{i=1}^l\frac{\partial^2H}{\partial x_1\partial x_i}\left(\frac pr\right)\frac\partial{\partial x_1}\wedge\dots\wedge\frac\partial{\partial x_{l+1}}\wedge\dots\wedge\frac\partial{\partial x_l}+O(r).
\end{eqnarray*}

Computing $\LL_{X_r}\omega^3(V^3)$ we get
$$
\LL_{X_r}\omega^3(V^3)=-\frac{\partial^2 H}{\partial x_1\partial x_{l+1}}\left(\frac pr\right)+O(r).
$$

Computing $\LL_{X_r}\omega^3(\LL_{X_r}V^3)$ we get
$$
\LL_{X_r}\omega^3(\LL_{X_r}V^3)=\left[\frac{\partial^2 H}{\partial x_1\partial x_{l+1}}\left(\frac pr\right)\right]^2-\frac{\partial^2 H}{\partial x_{l+1}^2}\left(\frac pr\right)\frac{\partial^2H}{\partial x_1^2}\left(\frac pr\right)+O(r).
$$

Now we will estimate the second derivative of the Lyapunov exponent corresponding to $E^3$. We remark that $\LL_{X_r}\omega^3$ is supported in $B_r=B(0,r)$, since here is where $X_r$ is supported. Then we have
\begin{eqnarray*}
\lambda_{r,E^3}''(0)&=&\int_M-\LL_{X_r}\omega^3(\LL_{X_r}V^3)-\left(\LL_{X_r}\omega^3(V^3)\right)^2+\frac 2{\tilde\eta_3}\LL_{X_r}\omega^3(f_*V^{3'})d\mu\\
&=&-\int_{B_r}\LL_{X_r}\omega^3(\LL_{X_r}V^3)+\left(\LL_{X_r}\omega^3(V^3)\right)^2d\mu+\int_{B_r}\frac 2{\tilde\eta_3}\LL_{X_r}\omega^3(f_*V^{3'})d\mu\\
&:=&-I_1+I_2.
\end{eqnarray*}

Computing $I_1$ in the chart and using the change of variables $p=(x_i)_{1\leq i\leq d}=rq=(ry_i)_{1\leq i\leq d}$, $dp=r^ddq$, we get
\begin{align*}
I_1&=\int_{B_r}\LL_{X_r}\omega^3(\LL_{X_r}V^3)+\left(\LL_{X_r}\omega^3(V^3)\right)^2d\mu\\
&=\int_{B_r}\left[\frac{\partial^2 H}{\partial x_1\partial x_{l+1}}\left(\frac pr\right)\right]^2-\frac{\partial^2 H}{\partial x_{l+1}^2}\left(\frac pr\right)\frac{\partial^2H}{\partial x_1^2}\left(\frac pr\right)+\left[\frac{\partial^2 H}{\partial x_1\partial x_{l+1}}\left(\frac pr\right)\right]^2+O(r)d\mu\\
&=r^d\int_{B(0,1)}2\left[\frac{\partial^2 H}{\partial x_1\partial x_{l+1}}\right]^2-\frac{\partial^2 H}{\partial x_{l+1}^2}\frac{\partial^2H}{\partial x_1^2}d\mu+\int_{B(0,r)}O(r)d\mu\\
&=r^dK+O(r^{d+1}),
\end{align*}
where $K$ is independent of $r$, and is given by the formula
$$
K=\int_{B(0,1)}2\left[\frac{\partial^2 H}{\partial x_1\partial x_{l+1}}\right]^2-\frac{\partial^2 H}{\partial x_{l+1}^2}\frac{\partial^2H}{\partial x_1^2}d\mu.
$$
Integrating by parts the second term we obtain
$$
\int_{B(0,1)}\frac{\partial^2 H}{\partial x_{l+1}^2}\frac{\partial^2H}{\partial x_1^2}d\mu=-\int_{B(0,1)}\frac{\partial H}{\partial x_1}\frac{\partial^3H}{\partial x_1\partial x_{l+1}^2}d\mu=\int_{B(0,1)}\left[\frac{\partial^2 H}{\partial x_1\partial x_{l+1}}\right]^2d\mu,
$$
so we have in fact that, if $H$ is not constant zero, then
$$
K=\int_{B(0,1)}\left[\frac{\partial^2 H}{\partial x_1\partial x_{l+1}}\right]^2d\mu>0.
$$

Now we will estimate $I_2$. Remember that $\mathcal P=Id-\omega^3(\cdot)V^3$ is the projection to the kernel of $\omega_F=\omega^3$. Denote
$$
A^3(p):=\mathcal P(\LL_{X_r}V^3)(p)=O(1).
$$

We can apply formula \eqref{eq:cve1} and we get
$$
V^{3'}=\left[\left.\left[Id-\left(\frac{f_*}{\tilde\eta_3}\right)\right]\right|_{F\wedge E^{\wedge(k-1)}}\right]^{-1}\mathcal P(\LL_{X_r}V^3)=\left[\left.\left[Id-\left(\frac{f_*}{\tilde\eta_3}\right)\right]\right|_{F\wedge E^{\wedge(k-1)}}\right]^{-1}A^3.
$$

In this case since $E=E^3$, and $E$ dominates $F=E^1\oplus E^2$, the operator $\left.\left(\frac{f_*}{\tilde\eta_3}\right)\right|_{F\wedge E^{\wedge(k-1)}}$ is a contraction by some $\nu\in(0,1)$, so 

$$
V^{3'}=\sum_{k\geq 0}\left(\frac{f_*}{\tilde\eta_3}\right)^kA^3.
$$

The support of $A^3$ is inside $B_r$, so the support of $f_*^kA^3$ is inside $f^k(B_r)$. Let $t_r=\min \{k\geq 1, B_r\cap f^k(B_r)\neq 0\}$ be the return time of the set $B_r$ to itself under $f$. Since $p$ is not periodic we have $\lim_{r\rightarrow 0}t_r=\infty$. Since $\left.\left(\frac{f_*}{\tilde\eta_3}\right)\right|_{F\wedge E^{\wedge(k-1)}}$ is a contraction by $\nu<1$, we have that, for $C$ independent of $r$
$$
\left\|\left(\frac{f_*}{\tilde\eta_3}\right)^kA_3\right\|\leq\nu^k\|A^3\|\leq C\nu^k.
$$
Since $\LL_{X_r}\omega^3$ is also bounded independently of $r$, we have the following estimation of $I_2$
\begin{eqnarray*}
I_2&=&\int_{B_r}\frac 2{\tilde\eta}\LL_{X_r}\omega^3(f_*V^{3'}))d\mu=\int_{B_r} 2\LL_{X_r}\omega^3\left(\frac{f_*}{\tilde\eta_3}\sum_{k\geq 0}\left(\frac{f_*}{\tilde\eta_3}\right)^kA^3\right)\\
&=&\sum_{k\geq1}\int_{B_r}2\LL_{X_r}\omega^3\left(\left(\frac{f_*}{\tilde\eta_3}\right)^kA^3\right)d\mu=\sum_{k\geq t_r}\int_{B_r\cap f^k(B_r)}2\LL_{X_r}\omega^3\left(\left(\frac{f_*}{\tilde\eta_3}\right)^kA^3\right)d\mu.
\end{eqnarray*}
and
$$
|I_2|\leq\sum_{k=t_r}^{\infty}2\int_{B_r\cap f^k(B_r)}C\nu^kd\mu\leq\sum_{k=t_r}^{\infty}2\int_{B_r}C\nu^Kdm\leq C\nu^{t_r}r^d,
$$
where $C$ is again some constant independent on $r$.

In conclusion, 
\begin{equation}
\lambda_{r,E^3}''(0)=-Kr^d+O(r^{d+1})+O(r^d\nu^{t_r}),
\end{equation}
so $\lim_{r\rightarrow 0}\frac{\lambda_{r,E^3}''(0)}{r^n}=-K<0$ and as a consequence $\lambda_{r,E^3}''(0)<0$ for all $r$ sufficiently small.

{\bf Estimation of $\lambda_{r,E^2}''(0)$.} We can do a similar estimation for the integrated Lyapunov exponent corresponding to the bundle $E^2$. In this case we will consider the corresponding $\omega^2$, $V^2$, $V^{2'}$, $\tilde\eta_2$.

We obtain again
\begin{eqnarray*}
\lambda_{r,E^2}''(0)&=&\int_M-\LL_{X_r}\omega^2(\LL_{X_r}V^2)-\left(\LL_{X_r}\omega^2(V^2)\right)^2+\frac 2{\tilde\eta_2}\LL_{X_r}\omega^2(f_*V^{2'})d\mu\\
&=&-\int_{B_r}\LL_{X_r}\omega^2(\LL_{X_r}V^2)+\left(\LL_{X_r}\omega^2(V^2)\right)^2d\mu+\int_{B_r}\frac 2{\tilde\eta_2}\LL_{X_r}\omega^2(f_*V^{2'})d\mu\\
&:=&-I_3+I_4.
\end{eqnarray*}

Because of the symmetry between $E^2$ and $E^3$ with respect to $X_r$, we obtain again the estimate
$$
I_3=r^dK+O(r^{d+1}),
$$
where, if $H$ is not trivial,
$$
K=\int_{B(0,1)}\left[\frac{\partial^2 H}{\partial x_1\partial x_{l+1}}\right]^2d\mu>0.
$$

We will estimate $I_4$. We write again
$$
\omega^2(p)=dx_{l+1}\wedge\dots\wedge dx_m+\alpha^2(p), \ \ \alpha^2=O(r),\ \ d\alpha^2=O(1),
$$
$$
V^2(p)=\frac\partial{\partial x_{l+1}}\wedge\dots\wedge\frac\partial{\partial x_m}+T^2(p),\ \ T^2=O(r),\ \ DT^2=O(1).
$$
We obtain the analog formula for $\LL_{X_r}\omega^3$ and $\LL_{X_r}V^2$:
\begin{eqnarray*}
\LL_{X_r}\omega^3&=&\frac{\partial^2 H}{\partial x_1\partial x_{l+1}}\left(\frac pr\right)dx_{l+1}\wedge\dots\wedge dx_m+\frac{\partial^2 H}{\partial x_1^2}\left(\frac pr\right)dx_1\wedge dx_{l+2}\wedge\dots\wedge dx_m+\\
&&+\sum_{i\in\{2,\dots,l,m+1,\dots,n\}}\frac{\partial^2 H}{\partial x_1\partial x_i}\left(\frac pr\right)dx_i\wedge dx_{l+2}\wedge\dots\wedge dx_m+O(r),
\end{eqnarray*}
\begin{eqnarray*}
\LL_{X_r}V^2&=&\frac{\partial^2H}{\partial x_1\partial x_{l+1}}\left(\frac pr\right)\frac\partial{\partial x_{l+1}}\wedge\dots\wedge\frac\partial{\partial x_m}+\\
&&-\sum_{i=l+1}^m\frac{\partial^2H}{\partial x_{l+1}\partial x_i}\left(\frac pr\right)\frac\partial{\partial x_{l+1}}\wedge\dots\wedge\frac\partial{\partial x_1}\wedge\dots\wedge\frac\partial{\partial x_m}+O(r).
\end{eqnarray*}

Consider now $\mathcal P=Id-\omega^2(\cdot)V^2$ the projection to the kernel of $\omega_F=\omega^2$, and denote
$$
A^2(p):=\mathcal P(\LL_{X_r}V^2)(p)=\sum_{i=l+1}^m\frac{\partial^2H}{\partial x_{l+1}\partial x_i}\left(\frac pr\right)\frac\partial{\partial x_{l+1}}\wedge\dots\wedge\frac\partial{\partial x_1}\wedge\dots\wedge\frac\partial{\partial x_m}+O(r).
$$
From formula \eqref{eq:cve1} we have
$$
V^{2'}=\left[\left.\left[Id-\left(\frac{f_*}{\tilde\eta_2}\right)\right]\right|_{F\wedge E^{\wedge(k-1)}}\right]^{-1}A^2.
$$

This time the operator $\left.\left(\frac{f_*}{\tilde\eta_2}\right)\right|_{F\wedge E^{\wedge(k-1)}}$ is not a contraction, but it is hyperbolic and decomposes in the sum of the contraction $\mathcal T_1$ and the expansion $\mathcal T_3$. In our case
$$
\mathcal T_1A^2=O(r)
$$
and
$$
\mathcal T_3A^2=\sum_{i=l+1}^m\frac{\partial^2H}{\partial x_{l+1}\partial x_i}\left(\frac pr\right)\frac\partial{\partial x_{l+1}}\wedge\dots\wedge\frac\partial{\partial x_1}\wedge\dots\wedge\frac\partial{\partial x_m}+O(r).
$$

Then applying the second part of \eqref{eq:cve1} we get 
$$
V^{2'}=\sum_{k\geq 0}\left(\frac{f_*}{\tilde\eta_2}\right)^k\mathcal T_1A^2-\sum_{k\geq 1}\left(\frac{f_*}{\tilde\eta_2}\right)^{-k}\mathcal T_3A^2
$$

Now we evaluate $I_4$. Let $t_r$ be the first return time of $B_r$ under $f^{-1}$. Like in the previous case, since $\left.\left(\frac{f_*}{\tilde\eta_2}\right)\right|_{E^3\wedge E^{\wedge(k-1)}}$ is an expansion, there exist $C>0$ and $0<\nu<1$ such that
$$
\left\|\left(\frac{f_*}{\tilde\eta_2}\right)^{-k}\mathcal T_3A^2\right\|\leq C\nu^k.
$$
We have
\begin{align*}
I_4&=\int_{B_r}\frac 2{\tilde\eta_2}\LL_{X_r}\omega^2(f_*V^{2'})d\mu\\
&=\int_{B_r}2\LL_{X_r}\omega^2\left[\frac{f_*}{\tilde\eta_2}\sum_{k\geq 0}\left(\frac{f_*}{\tilde\eta_2}\right)^k\mathcal T_1A^2-\frac{f_*}{\tilde\eta_2}\sum_{k\geq 1}\left(\frac{f_*}{\tilde\eta_2}\right)^{-k}\mathcal T_3A^2\right]d\mu\\
&=2\sum_{k\geq 1}\int_{B_r}2\LL_{X_R}\omega^2\left[\left(\frac{f_*}{\tilde\eta_2}\right)^k\mathcal T_1A^2\right]d\mu-2\sum_{k\geq 0}\int_{B_r}2\LL_{X_R}\omega^2\left[\left(\frac{f_*}{\tilde\eta_2}\right)^{-k}\mathcal T_3A^2\right]d\mu\\
&=-\int_{B_r}2\LL_{X_R}\omega^2\left(\mathcal T_3A^2\right)d\mu-2\sum_{k\geq t_r}\int_{B_r}2\LL_{X_R}\omega^2\left[\left(\frac{f_*}{\tilde\eta_2}\right)^{-k}\mathcal T_3A^2\right]d\mu+O(r^{d+1})\\
&=\int_{B_r}2\frac{\partial^2 H}{\partial x_{l+1}^2}\left(\frac pr\right)\frac{\partial^2H}{\partial x_1^2}\left(\frac pr\right)d\mu+O(r^{d+1})+O(r^d\nu^{t_r})\\
&=2K+O(r^{d+1})+O(r^d\nu^{t_r}).
\end{align*}

Putting $I_3$ and $I_4$ together we obtain
\begin{equation}
\lambda_{r,E^2}''(0)=Kr^d+O(r^{d+1})+O(r^d\nu^{t_r}),
\end{equation}
so $\lim_{r\rightarrow 0}\frac{\lambda_{r,E^3}''(0)}{r^d}=K>0$ and as a consequence $\lambda_{r,E^3}''(0)>0$ for all $r$ sufficiently small.

\end{proof}


\section{Critical points and rigidity}\label{sec:critandrig}

In this section we will prove Theorem~\ref{teo:rigidity} and Corollary~\ref{cor:rigidityA}. Let us remark that assuming better regularity of the bundles E and (or) F would simplify the proof considerably. For example if $E$ is sufficiently smooth we can assume that $V$ is constant $\frac{\partial}{\partial x_1}$ in some local chart, and this simplifies considerably the expression of the derivative of the Lyapunov exponent. However we want to apply the result to dynamical foliations in order to obtain rigidity, and assuming better regularity of the invariant bundles already implies rigidity in many situations (see \cite{HK1990}).

The strategy of the proof is the following. First we see that if the diffeomorphism $f$ is critical, then by moving the derivatives in the formula \eqref{eq:derLyap1F} away from $X$, we obtain that some specific continuous one-form on $M$ is locally exact, or the integral vanishes over every local piecewise $C^1$ closed curve. Then we choose a closed curve which is a ``rectangle" formed by $\mathcal W^E$ and $\mathcal W^F$ pieces. The integral of the one-form over a $\mathcal W^E$ piece can be written in terms of the densities of the disintegrations of the volume along $\mathcal W^E$.  The integral of the one-form over a $\mathcal W^F$ piece can be written in terms of the Jacobian of the holonomy along $\mathcal W^F$ between $\mathcal W^E$ pieces. Then the vanishing of the integral over all such closed curves will imply that the disintegrations of the volume along $\mathcal W^E$ are invariant under the $\mathcal W^F$ holonomy.

In the rest of the section $\mu$ will be the Lebesgue measure on the manifold $M$, and $\mu_E$ is the Lebesgue measure on the leaves of the foliation $\mathcal W^E$ tangent to the sub-bundle $E$. Given a foliation chart $U$ for the foliation $\mathcal W^E$, following Rokhlin \cite{R1952} we can define the disintegrations of $\mu$ along the local leaves of $\mathcal W^E$ in $U$. We will denote by $m_E(x)$ the disintegration along the local leaf of $\mathcal W^E$ passing through $x$, this is defined for $\mu$-almost every $x$ in $U$. There is also a quotient measure defined on the space of local leaves in $U$. If $T$ is a transversal to the foliation $\mathcal W^E$ in $U$, the quotient measure induced on $T$ is denoted $\mu_T$.

In our case the disintegrations $m_E$ of $\mu$ along the local leaves of $\mathcal W^E$ are absolutely continuous with respect to the Lebesgue measure on the leaves, $\mu_E$. We denote by $\rho_x$ the densities of the disintegrations with respect to Lebesgue, or
$$
dm_E(x)=\rho_xd\mu_E(x).
$$
If there is no need to specify it, we will drop the $x$ from the above notations.

\begin{proof}[Proof of Theorem~\ref{teo:rigidity}]

We divide the proof in several steps.

{\bf Step 1: Moving the derivatives away from $X$.}

If $f$ is critical for $E$ and the volume $\mu$, then by \eqref{eq:derLyap1F} we know that for every $C^{3}$ divergence free (volume preserving) vector field $X$ on $M$ we have 
$$
\int_M\LL_X\omega(V)d\mu=0.
$$

Suppose that $X$ is supported in a foliation chart $U$ for $\mathcal W^E$, with some transversal $T$. Assume that in this chart we have the quotient measure $\mu_T$ and the disintegration of the volume along the $\mathcal W^E$ leaves have the density $\rho$ with respect to Lebesgue on the leaves $\mu_E$. Since $\frac V{\|V\|}$ is a unit vector field generating $\mathcal W^E$, and $\mu_E$ is the Lebesgue measure on $\mathcal W^E$, the Fundamental Theorem of Calculus gives that for every piece $\mathcal W^E(p)$ and every $C^1$ function $g:\mathcal W^E(p)\rightarrow\mathbb R$ with compact support,
$$
\int_{\mathcal W^E(p)}dg\left(\frac V{\|V\|}\right) d\mu_E=0.
$$
This means that we can integrate by parts on $\mathcal W^E(p)$. We have
\begin{align*}
0&=\int_M\LL_X\omega(V)d\mu=\int_Mi_Xd\omega(V)+di_X\omega(V)d\mu\\
&=\int_M-i_Vd\omega(X)d\mu+\int_Ud[\omega(X)](V)d\mu\\
&=\int_M-i_Vd\omega(X)d\mu+\int_T\int_{\mathcal W^E(p)}d[\omega(X)]\left(\frac V{\|V\|}\right)\rho\|V\| d\mu_Ed\mu_T\\
&=\int_M-i_Vd\omega(X)d\mu-\int_T\int_{\mathcal W^E(p)}d(\rho\|V\|)\left(\frac V{\|V\|}\right)\omega(X) d\mu_Ed\mu_T\\
&=\int_M-i_Vd\omega(X)d\mu-\int_T\int_{\mathcal W^E(p)}d\log(\rho\|V\|)(V)\omega(X)\rho d\mu_Ed\mu_T\\
&=-\int_M\left[i_Vd\omega+d\log(\rho\|V\|)(V)\omega\right](X)d\mu:=-\int_M\alpha(X)d\mu,
\end{align*}
where $\alpha=i_Vd\omega+d\log(\rho\|V\|)(V)\omega$ is a continuous 1-form in $U$. We used the fact that $\rho$ and $V$ are differentiable along the $\mathcal W^E$ leaves, which we know from the hypothesis on $E$.

\begin{rem}
If $V$ is $C^1$ then $d\log(\rho\|V\|)(V)=\hbox{div}(V)$ and the formula above can be deduced directly ($vol$ is the volume $d$-form):
$$
\int_Mdi_X\omega(V)d\mu=\int_Mdi_X\omega\wedge i_Vvol=-\int_M\omega(X)di_Vvol=\int_M\omega(X)\hbox{div}(V)d\mu.
$$
\end{rem}

{\bf Step 2: The form $\alpha=i_Vd\omega+d\log(\rho\|V\|)(V)\omega$ is exact.}

From the previous step we know that for every $C^3$ divergence free vector field $X$ supported in $U$ we have
\begin{equation}\label{eq:alphazero}
\int_U\alpha(X)d\mu=0.
\end{equation}
The proof of this step is given by the following lemma.

\begin{lema}\label{le:exactform}
Let $U$ be an open set in $\mathbb R^d$, and $\alpha$ a continuous 1-form in $U$. Suppose that for every $C^{\infty}$ divergence free vector field $X$ supported in $U$ we have $\int_U\alpha(X)d\mu=0$.

Then $\alpha$ is exact, i.e. for every piecewise $C^1$ curve $\gamma\subset U$ we have
$$
\int_{\gamma}\alpha=0.
$$
\end{lema}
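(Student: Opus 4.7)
The natural strategy is to reduce the continuous case to the smooth case by mollification, where the hypothesis translates into classical closedness $d\alpha=0$, and then to use Stokes's theorem together with uniform approximation to pass back to $\alpha$. Since the conclusion is local (the application will take $\gamma$ inside a small foliation chart, which is simply connected), there is no topological obstruction in shrinking $U$ if needed.

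First I would set up the mollification. Fix a standard nonnegative bump $\varphi\in C^\infty_c(\mathbb R^d)$ with $\int\varphi=1$ and support in the unit ball, let $\varphi_\epsilon(x)=\epsilon^{-d}\varphi(x/\epsilon)$, and define $U_\epsilon:=\{x\in U:\overline{B(x,\epsilon)}\subset U\}$. Define the convolved form $\alpha_\epsilon$ componentwise by $(\alpha_\epsilon)_i:=\alpha_i\ast\varphi_\epsilon$; this is smooth on $U_\epsilon$ and converges to $\alpha$ uniformly on every compact subset of $U$. The key point is that the hypothesis is inherited by $\alpha_\epsilon$: for any smooth divergence-free vector field $X$ compactly supported in $U_\epsilon$, define $\tilde X:=X\ast\check\varphi_\epsilon$ with $\check\varphi_\epsilon(y)=\varphi_\epsilon(-y)$. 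Then $\tilde X$ is smooth, compactly supported in $U$, and divergence-free (convolution commutes with $\partial_i$, so $\di\tilde X=(\di X)\ast\check\varphi_\epsilon=0$). A direct application of Fubini gives
\[
\int_{U_\epsilon}\alpha_\epsilon(X)\,d\mu=\int_U\alpha(\tilde X)\,d\mu=0.
\]

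Next I would show that $\alpha_\epsilon$ is closed in $U_\epsilon$ using explicit test vector fields. For any $\phi\in C^\infty_c(U_\epsilon)$ and any pair $i<j$, the vector field $X:=(\partial_j\phi)\,\partial_{x_i}-(\partial_i\phi)\,\partial_{x_j}$ is smooth, compactly supported and divergence-free. Plugging it into the identity above and integrating by parts,
\[
0=\int_{U_\epsilon}\bigl[(\alpha_\epsilon)_i\,\partial_j\phi-(\alpha_\epsilon)_j\,\partial_i\phi\bigr]d\mu=\int_{U_\epsilon}\bigl[\partial_i(\alpha_\epsilon)_j-\partial_j(\alpha_\epsilon)_i\bigr]\phi\,d\mu
\]
for every $\phi$, so $d\alpha_\epsilon=0$ classically on $U_\epsilon$.

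Finally I would conclude by Stokes and passage to the limit. Given any piecewise $C^1$ closed curve $\gamma\subset U$ which bounds a (piecewise smooth) $2$-chain $\Sigma\subset U$ (e.g.\ any closed curve lying in a simply connected subset of $U$, which is all that is needed for the application), then for $\epsilon$ small enough both $\gamma$ and $\Sigma$ lie in $U_\epsilon$, so Stokes gives $\int_\gamma\alpha_\epsilon=\int_\Sigma d\alpha_\epsilon=0$. Since $\alpha_\epsilon\to\alpha$ uniformly on the compact set $\gamma$, we obtain $\int_\gamma\alpha=\lim_{\epsilon\to 0}\int_\gamma\alpha_\epsilon=0$, proving the lemma. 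There is no serious obstacle here; the only mild point to keep track of is to ensure the convolution argument preserves the divergence-free and compact-support conditions, which is exactly what the symmetric choice $\tilde X=X\ast\check\varphi_\epsilon$ guarantees.
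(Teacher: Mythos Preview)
Your argument is correct and complete for null-homotopic closed curves (in particular whenever $U$ is simply connected), and you rightly note that this already suffices for the application in the paper, where $U$ is a foliation chart. The route you take is genuinely different from the paper's: you mollify $\alpha$, transfer the hypothesis to $\alpha_\epsilon$ by convolving the test field, extract $d\alpha_\epsilon=0$ from the ``rotational'' test fields $(\partial_j\phi)\partial_{x_i}-(\partial_i\phi)\partial_{x_j}$, and finish with Stokes and uniform convergence. The paper instead works directly with the original continuous $\alpha$: given a smooth simple closed curve $\gamma$, it builds a tubular neighborhood $U_0\cong B^{d-1}\times\mathbb T^1$ foliated by closed curves, rescales the tangent field to make it divergence-free, and then tests the hypothesis against $gX$ for $g$ constant on leaves; the ergodic decomposition of Lebesgue along the closed curves converts $\int_{U_0}\alpha(gX)\,d\mu$ into an average of $\int_{\gamma_p}\alpha$, forcing all periods to vanish.

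The payoff of the paper's construction is that it proves the lemma as stated, for \emph{arbitrary} open $U$: the tubular-neighborhood test field detects the period of $\alpha$ around $\gamma$ even when $\gamma$ is not null-homotopic in $U$. Your argument, by contrast, only extracts $d\alpha_\epsilon=0$ from the hypothesis, and closedness alone does not give $\int_\gamma\alpha_\epsilon=0$ without a bounding $2$-chain; for instance, on an annulus the form $d\theta$ is closed but has nonzero period. (It \emph{does} fail the hypothesis --- the field $\rho(r)\hat\theta$ for a radial bump $\rho$ is compactly supported, divergence-free, and pairs nontrivially with $d\theta$ --- but your test fields, being of the form $\nabla^\perp\phi$ in each coordinate $2$-plane, cannot see this.) So to match the full statement you would still need, after mollification, a divergence-free test field concentrated near $\gamma$ that computes its period; that is exactly the tubular-neighborhood construction the paper supplies. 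If you are content with the simply connected case needed downstream, your proof is cleaner and more elementary; if you want the lemma in full generality, one extra ingredient of the paper's type is required.
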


\begin{proof}
First assume that $\gamma$ is a $C^{\infty}$ simple closed curve in $U$. There exists a tubular neighborhood $U_0$ of $\gamma$ which is $C^{\infty}$ diffeomorphic to $B_{\mathbb R^{n-1}}(0,1)\times\mathbb T^1$, and thus it is $C^{\infty}$ foliated by $C^{\infty}$ closed curves corresponding to the curves $\{x\}\times\mathbb T^1$, $x\in B_{\mathbb R^{n-1}}(0,1)$. Let $T$ be a $C^{\infty}$ transversal to the foliation of closed curves of the tubular neighborhood of $\gamma$.

Let $X_0$ be a $C^{\infty}$ vector field in $U_0$ tangent to the foliation of $U_0$ by closed curves, and such that the period of all the closed curves for the flow $\phi^{X_0}$ generated by $X_0$ is one (this can be done by pulling back under the diffeomorphisms the unit vector field in $B_{\mathbb R^{n-1}}(0,1)\times\mathbb T^1$ tangent to the $\{x\}\times\mathbb T^1$ curves).

We claim that we can rescale $X_0$ by a $C^{\infty}$ nonzero scalar function $f$ such that $fX_0$ is divergence free. The map $f$ must satisfy:
$$
\hbox{div}(fX_0)=f\hbox{div}(X_0)+df(X_0)=0,
$$
or
$$
d(\log f)(X_0)=-\hbox{div}(X_0).
$$
We can take the initial conditions $f(p)=1$ (or $\log f(p)=0$) for all $p\in T$, and we get that for all $t\in[0,1]$
$$
\log f\left(\phi_t^{X_0}(p)\right)=-\int_0^t\hbox{div}(X_0)\left(\phi_s^{X_0}(p)\right)ds.
$$
This formula defines the $f$ on $U_0$, but it may have a discontinuity at the return of the flow $\phi^{X_0}$ at the transversal $T$. The flow $\phi_t^{X_0}$ is periodic with period one, or $\phi_1^{X_0}=Id$, and $\det\left[D\phi_1^{X_0}\right]=1$. From the Liouville formula we have that
$$
\det\left[D\phi_1^{X_0}(p)\right]=\det\left[D\phi_0^{X_0}(p)\right]\exp\left(\int_0^1\hbox{div}(X_0)\left(\phi_s^{X_0}(p)\right)ds\right).
$$
This implies that $\int_0^1\hbox{div}(X_0)\left(\phi_s^{X_0}(p)\right)ds=0$, so $f(p)=f\left(\phi_1^{X_0}(p)\right)$, and that $f$ is well defined and $C^{\infty}$ on $U_0$, and $fX_0$ is a $C^{\infty}$ rescaling of $X_0$ which is divergence free.

Let $X:=fX_0$ be the divergence free vector field tangent to the foliation of $U_0$ by closed curves. Let us comment that the election of $X$ is not unique, in fact given any $C^{\infty}$ scalar function $g$ which is constant on the closed leaves of the foliation, we have $\hbox{div}(gX)=0$. Then without loss of generality we can assume that the period of $\phi^X$ on each closed curve $\gamma_p$ is one.

Up to rescaling, we can assume that the volume $\mu$ restricted to $U_0$ is a probability invariant measure for $\phi^{X_0}$. The ergodic decomposition of $\mu$ for the flow generated by $X$ in $U_0$ consists of the measures $\nu_p$ supported on the closed curves $\gamma_p,\ p\in T$:
$$
\int_{\gamma_p} gd\nu_p=\int_0^1g(\phi_t^X(p))dt,\ \ \forall g\in C^0(U_0,\mathbb R),
$$
and
$$
\int_{U_0}gd\mu=\int_T\int_{\gamma_p}gd\nu_pdm_T,\ \ \forall g\in C^0(U_0,\mathbb R),
$$
where $m_T$ is the quotient measure on $T$, which can be identified with the set of ergodic invariant measures for $\phi^{X_0}$. It is easy to see that $m_T$ has full support (it is in fact smooth if $T$ is smooth).

Also observe that we have
$$
\int_{\gamma_p}\alpha(X)d\nu_p=\int_0^1\alpha(X)(\phi_t^X(p))dt=\int_0^1\phi_{\cdot}^{X*}\alpha=\int_{\gamma_p}\alpha
$$
where $\phi_{\cdot}^{X*}\alpha$ is the pull back of $\alpha$ by the map $\phi_{\cdot}^X:[0,1]\rightarrow U_0$.

Now we will apply this to \eqref{eq:alphazero} with $X$ replaced by the divergence free $gX$, where $g$ is constant on the closed leaves and supported in $U_0$. We get
$$
0=\int_{U_0}\alpha(gX)d\mu=\int_Tg(p)\int_{\gamma_p}\alpha(X)d\nu_pdm_T=\int_Tg(p)\int_{\gamma_p}\alpha dm_T.
$$
Since the above relation holds for any compactly supported $C^{\infty}$ $g:T\rightarrow\mathbb R$, and the map $p\mapsto \int_{\gamma_p}\alpha$ is continuous, we have that $\int_{\gamma_p}\alpha=0$ for all $p\in T=\hbox{supp}(m_T)$, and in particular for the initial smooth simple closed curve $\gamma$.

\begin{rem}
An alternative method to the use of the ergodic decomposition would be to use the explicit formula of densities $\rho$ of the disintegrations of the volume along the curves of a flow generated by a $C^1$ vector field $X$.
\end{rem}

We continue with the proof of Lemma~\ref{le:exactform}. We obtained that $\int_{\gamma}\alpha=0$ for any simple closed $C^{\infty}$ curve $\gamma$. Now given any piecewise $C^1$ closed curve $\gamma$, since $\alpha$ is $C^0$, we can make a standard approximation of $\gamma $ with (a sum of) $C^{\infty}$ simple closed curves $\gamma_k$, in the sense that $\int_{\gamma_k}\alpha\rightarrow\int_{\gamma}\alpha$, and the conclusion of the lemma follows.
\end{proof}

\begin{rem}
The converse of the Lemma~\ref{le:exactform} is also true, and is an immediate consequence of the Birkhoff Ergodic Theorem.
\end{rem}

{\bf Step 3: The integral of $\alpha$ on $\mathcal W^E$ curves.}

We want to apply the previous step to a piecewise $C^1$ simple closed curve formed by two $\mathcal W^E$ pieces and two $\mathcal W^F$ pieces (curves inside $\mathcal W^E$ and $\mathcal W^F$). Suppose that we have such a ``rectangle'' formed by the points $a,b,c,d$ with $\gamma_{ab}$ a $\mathcal W^E$ piece between $a$ and $b$, $\gamma_{bc}$ a $\mathcal W^F$ piece between $b$ and $c$, $\gamma_{cd}$ a $\mathcal W^E$ piece between $c$ and $d$, and $\gamma_{da}$ a $\mathcal W^F$ piece between $d$ and $a$.

The restriction of $i_Vd\omega$ to $E=T\mathcal W^E$ is zero, because $V$ is generating $E$ and $i_Vd\omega(V)=d\omega(V,V)=0$. Since $\frac V{\| v\|}$ gives the arc length parametrization of $\gamma_{ab}$, we have
\begin{align*}
\int_{\gamma_{ab}}\alpha&=\int_{\gamma_{ab}}i_Vd\omega+\int_{\gamma_{ab}}d\log(\rho\|V\|)(V)\omega\\
&=\int_{\gamma_{ab}}d\log(\rho\|V\|)(V)\omega\left(\frac V{\|V\|}\right)d\mu_E=\int_{\gamma_{ab}}d\log(\rho\|V\|)\left(\frac V{\|V\|}\right)d\mu_E\\
&=\log(\rho(b)\|V(b)\|)-\log(\rho(a)\|V(a)\|).
\end{align*}
Similarly we get that
\begin{equation}\label{eq:gammacd}
\int_{\gamma_{cd}}\alpha=\log(\rho(d)\|V(d)\|)-\log(\rho(c)\|V(c)\|).
\end{equation}

{\bf Step 4: The integral of $\alpha$ on $\mathcal W^F$ curves.}

The restriction of $\omega$ on $F=\ker\omega=T\mathcal W^F$ is zero from the definition of $\omega$. Then
$$
\int_{\gamma_{bc}}\alpha=\int_{\gamma_{bc}}i_Vd\omega.
$$

Since $F=\ker\omega$ is integrable, we can choose $f,g:M\rightarrow\mathbb R$ such that $\omega=fdg$ (take $g$ constant on the leaves of $\mathcal W^F$ with $dg\neq 0$ and rescale by some $f$). If $f$ and $dg$ would be $C^1$ (this would happen if $F$ would be $C^2$), then $d\omega=df\wedge dg=d\log f\wedge\omega$, and $i_Vd\omega|_F=d\log f$.

In our case $F$ and $\omega$ are only $C^1$, so $g$ is $C^1$, and $dg$ and $f$ could be only continuous. We claim that however $dg$ and $f$ are $C^1$ along curves in $\mathcal W^F$ leaves, and the relation $i_Vd\omega|_F=d\log f$ still holds.

We will check the differentiability of $dg$ along $\mathcal W^F$. Let $X$ be a $C^1$ vector field in $F$, then the curves of the flow $\phi^X$ are inside $\mathcal W^F$. We have
$$
dg(p)=d(g\circ\phi_t^X)(p)=dg(\phi_t^X(p))\cdot D\phi_t^X(p),
$$
or
$$
dg(\phi_t^X(p))=dg(p)\cdot \left[D\phi_t^X(p)\right]^{-1}.
$$
Since $t\mapsto D\phi_t^X(p)$ is $C^1$, we get that $dg$ is $C^1$ along the curve $t\mapsto\phi_t^X(p)$, and then the same must hold for $f$ (and $\log f$) since $\omega$ is also $C^1$, so $d(\log f)(X)$ is well defined along $\mathcal W^F$. 

Now assume that the flow generated by the vector field $X$ joins $b$ with $c$ in $\mathcal W^F(b)$, or $\phi_T^X(b)=c$ for some $T>0$ and $\gamma_{bc}$ is $\phi_t^X(b)$, $t\in[0,T]$. Then
\begin{align*}
i_V&d\omega(X)(\phi_t^X(b))=-i_Xd\omega(V)(\phi_t^X(b))=-\LL_X\omega(V)(\phi_t^X(b))\\
&=-\lim_{s\rightarrow 0}\frac{\phi_s^{X*}\omega-\omega}s(V)(\phi_t^X(b))=-\lim_{s\rightarrow 0}\frac{f\circ\phi_s^Xd\left(g\circ\phi_s^X\right)-fdg}s(V)(\phi_t^X(b))\\
&=-\lim_{s\rightarrow 0}\frac{f(\phi_{t+s}^X(p))-f(\phi_t(p))}sdg(V)(\phi_t^X(p))=\frac{df(X)(\phi_t^X(p))}{f(\phi_t^X(P))}fdg(X)(\phi_t^X(p))\\
&=-d(\log f)(X)(\phi_t^X(p))\omega(V)(\phi_t^X(p))=-\frac{\partial}{\partial t}\log(f(\phi_t^X(p))).
\end{align*}
We used the facts that $\omega(V)=1$, $\LL_x\omega=di_X\omega+i_Xd\omega$ and $di_X\omega=0$ since $X\in\ker\omega$. Then
\begin{align*}
\int_{\gamma_{bc}}\alpha&=\int_{\gamma_{bc}}i_Vd\omega=\int_0^Ti_Vd\omega(X)(\phi_t^X(b))dt\\
&=-\int_0^T\frac{\partial}{\partial t}\log(f(\phi_t^X(p)))dt=\log f(b)-\log f(c)=\log\frac{f(b)}{f(c)}.
\end{align*}

Let $h_{bc}:\mathcal W^E(b)\rightarrow\mathcal W^E(c)$ be the holonomy between $\mathcal W^E(b)$ and $\mathcal W^E(c)$ along the $\mathcal W^F$ foliation and homotopic to $\gamma_{bc}$. Then $h_{bc}$ is $C^1$, and the Jacobian of $h_{bc}$ is given by
$$
Dh_{bc}(b)\left(\frac {V(b)}{\|V(b)\|}\right)=Jh_{bc}(b)\frac{V(c)}{\|V(c)\|}.
$$
Since $g\circ h_{bc}=g$ we get
\begin{align*}
dg(b)V(b)&=d(g\circ h_{bc})V(b)=\|V(b)\|dg(c)Dh_{bc}(b)\left(\frac {V(b)}{\|V(b)\|}\right)\\
&=Jh_{bc}(b)\frac{\|V(b)\|}{\|V(c)\|}dg(c)V(c)
\end{align*}
Because $\omega=fdg$ and $\omega(V)=1$ we have that $dg(V)=\frac 1f$. Then we obtain
$$
\frac{f(b)}{f(c)}=\frac{dg(c)V(c)}{dg(b)V(b)}=\frac 1{Jh_{bc}(b)}\frac{\|V(c)\|}{\|V(b)\|},
$$
so
\begin{equation}\label{eq:gammabc}
\int_{\gamma_{bc}}\alpha=\log\left(\frac{\|V(c)\|}{\|V(b)\|}\right)-\log Jh_{bc}(b).
\end{equation}
Similarly we get
\begin{equation}\label{eq:gammada}
\int_{\gamma_{da}}\alpha=\log\left(\frac{\|V(a)\|}{\|V(d)\|}\right)-\log Jh_{da}(d)=\log\left(\frac{\|V(a)\|}{\|V(d)\|}\right)+\log Jh_{bc}(a),
\end{equation}
since $h_{da}=h_{bc}^{-1}$ and $h_{bc}(a)=d$.

{\bf Step 5: Concluding the invariance of disintegrations under holonomies.}

Now we put together \eqref{eq:gammacd}, \eqref{eq:gammabc}, \eqref{eq:gammada}, and we use that $\alpha$ is locally exact. Then
\begin{eqnarray*}
0&=&\int_{\gamma_{ab}}\alpha+\int_{\gamma_{bc}}\alpha+\int_{\gamma_{cd}}\alpha+\int_{\gamma_{da}}\alpha\\
&=&\log(\rho(b)\|V(b)\|)-\log(\rho(a)\|V(a)\|)+\log\left(\frac{\|V(c)\|}{\|V(b)\|}\right)-\log Jh_{bc}(b)+\\
&&+\log(\rho(d)\|V(d)\|)-\log(\rho(c)\|V(c)\|)+\log\left(\frac{\|V(a)\|}{\|V(d)\|}\right)+\log Jh_{bc}(a)\\
&=&\log\left(\frac{Jh_{bc}(a)\rho(d)}{\rho(a)}\right)-\log\left(\frac{Jh_{bc}(b)\rho(c)}{\rho(b)}\right).
\end{eqnarray*}
Assume that we fix two nearby local $W^E$ leaves, $\mathcal W^E(b)$ and $\mathcal W^E(c)$, and let $h_{bc}$ be the local holonomy between them along $\mathcal W^F$. Then for any $a\in\mathcal W^E(b)$, the expression
\begin{equation}\label{eq:invmeasure}
\frac{Jh_{bc}(a)\rho(h_{bc}(a))}{\rho(a)}=k
\end{equation}
is constant.

Remember that $m_E(b)$ is the disintegration of $\mu$ along the local $\mathcal W^E$ leaf passing through $b$, and $\rho(a)$ is the density of $m_E(b)$ with respect to the Lebesgue measure on the local leaf, $\mu_E(b)$. On the other hand, from the formula of change of variable, we have that $Jh_{bc}(a)\rho(h_{bc}(a))$ is the density of the pull-back under the holonomy $h_{bc}$ (or the push-forward under $h_{bc}^{-1}$) of $m_E(c)$, the disintegration of $\mu$ along the local $\mathcal W^E$ leaf passing through $c$.

Then the formula \eqref{eq:invmeasure} shows that the holonomy along $\mathcal W^F$ preserves the disintegrations of $\mu$ along $\mathcal W^E$ local leaves, modulo the multiplication by a constant. In general the disintegrations are well defined modulo multiplication with constants, due to the choice of the foliation chart, so if the local foliation chart is properly chosen, then the disintegrations will be invariant under the $\mathcal W^F$ holonomies. This concludes the proof of Theorem~\ref{teo:rigidity}.

\end{proof}

\begin{rem}
Let us comment that even though we obtain the above result for the case when one foliation is one-dimensional, it seems very probable that the result should work for higher dimensional foliations too, at least under the condition that the two bundles are $C^1$ and integrable.
\end{rem}

\begin{proof}[Proof of Corollary~\ref{cor:rigidityA}]

The stable and unstable bundles of an area preserving $C^{\infty}$ diffeomorphism are of class $C^{2-}$ by \cite{PSW2004}, so we can apply Theorem~\ref{teo:rigidity} and obtain that the disintegrations of the area along the unstable foliation are invariant under the stable holonomy. Since the disintegrations are smooth, meaning that the densities are uniformly $C^{\infty}$ along the unstable leaves, then the Jacobian of the stable holonomy between unstable leaves must be also uniformly $C^{\infty}$. Because the unstable leaves are one dimensional this implies that the stable holonomies between unstable leaves are uniformly $C^{\infty}$, and the Journ\'e Lemma \cite{J1988} gives that the stable foliation is $C^{\infty}$. Once we get the smoothness of the foliations, we can use \cite{HK1990} or \cite{LMM1986} and we obtain the conclusion on rigidity.

\end{proof}

\begin{rem}
We used several dynamical results in order to simplify the proof, however one can obtain a more general result. The property of being critical is in fact a property of the two transversal bundles, without considering any dynamics. One can show that if the bundles are $C^1$ and critical in the sense above, then there exists a diffeomorphism preserving the area and taking the two foliations into two foliations of the two-torus by straight lines.
\end{rem}

\bibliographystyle{plain}
\bibliography{SVV}

\end{document}